\theoremstyle{plain}
\newtheorem{thm}{Theorem}[section]
\crefname{thm}{Theorem}{Theorems}
\theoremstyle{plain}
\newtheorem{lem}[thm]{Lemma}
\crefname{lem}{Lemma}{Lemmas}
\theoremstyle{plain}
\theoremstyle{plain}
\newtheorem*{claim*}{Claim}
\crefname{claim}{Claim}{Claims}
\theoremstyle{definition}
\theoremstyle{plain}
\crefname{conjecture}{Conjecture}{Conjectures}
\theoremstyle{plain}
\newtheorem{prop}[thm]{Proposition}
\crefname{prop}{Proposition}{Propositions}
\theoremstyle{definition}
\theoremstyle{definition}
\theoremstyle{plain}
\newtheorem{claim}[thm]{Claim}
\newtheorem{fact}[thm]{Fact}
\crefname{fact}{Fact}{Facts}
\crefname{subsection}{Subsection}{Subsections}
\date{}
\let\originalleft\left
\let\originalright\right
\renewcommand{\left}{\mathopen{}\mathclose\bgroup\originalleft}
\renewcommand{\right}{\aftergroup\egroup\originalright}
\renewcommand*{\UrlTildeSpecial}{%
  \do\~{%
    \mbox{%
      \fontfamily{ptm}\selectfont
      \textasciitilde
    }%
  }%  
}%    
\let\Url@force@Tilde\UrlTildeSpecial
\let\OLDthebibliography\thebibliography
\renewcommand\thebibliography[1]{
  \OLDthebibliography{#1}
  \setlength{\parskip}{0pt}
  \setlength{\itemsep}{3pt plus 0.3ex}
}
\numberwithin{equation}{section}
\begin{document}
\global\long\def\per{\operatorname{per}}%
\global\long\def\RR{\mathbb{R}}%
\global\long\def\FF{\mathbb{F}}%
\global\long\def\QQ{\mathbb{Q}}%
\global\long\def\E{\mathbb{E}}%
\global\long\def\Var{\operatorname{Var}}%
\global\long\def\CC{\mathbb{C}}%
\global\long\def\NN{\mathbb{N}}%
\global\long\def\ZZ{\mathbb{Z}}%
\global\long\def\GG{\mathbb{G}}%
\global\long\def\tallphantom{\vphantom{\sum}}%
\global\long\def\tallerphantom{\vphantom{\int}}%
\global\long\def\supp{\operatorname{supp}}%
\global\long\def\one{\mathbbm{1}}%
\global\long\def\d{\operatorname{d}}%
\global\long\def\Unif{\operatorname{Unif}}%
\global\long\def\Po{\operatorname{Po}}%
\global\long\def\Bin{\operatorname{Bin}}%
\global\long\def\Ber{\operatorname{Ber}}%
\global\long\def\Geom{\operatorname{Geom}}%
\global\long\def\Rad{\operatorname{Rad}}%
\global\long\def\floor#1{\left\lfloor #1\right\rfloor }%
\global\long\def\ceil#1{\left\lceil #1\right\rceil }%
\global\long\def\falling#1#2{\left(#1\right)_{#2}}%
\global\long\def\cond{\,\middle|\,}%
\global\long\def\su{\subseteq}%
\global\long\def\row{\operatorname{row}}%
\global\long\def\col{\operatorname{col}}%
\global\long\def\spn{\operatorname{span}}%
\global\long\def\eps{\varepsilon}%

\global\long\def\ls#1{\textcolor{blue}{\textbf{[LS comments:} #1\textbf{]}}}
\global\long\def\mk#1{\textcolor{red}{\textbf{[MK comments:} #1\textbf{]}}}

\title{On the permanent of a random symmetric matrix}
\author{
Matthew Kwan\thanks{Department of Mathematics, Stanford University, Stanford, CA.
Email: \href{mattkwan@stanford.edu}{\nolinkurl{mattkwan@stanford.edu}}.
Research supported by NSF Award DMS-1953990.}
\and
Lisa Sauermann\thanks{School of Mathematics, Institute for Advanced Study, Princeton, NJ. Email: \href{lsauerma@mit.edu}{\nolinkurl{lsauerma@mit.edu}}. Research supported by NSF Grant CCF-1900460 and NSF Award DMS-1953772. Part of this work was completed while this author was a Szeg\H{o} Assistant Professor at Stanford University.}
}

\maketitle

\begin{abstract}\noindent
Let $M_{n}$ denote a random symmetric $n\times n$ matrix, whose entries on and above the diagonal are i.i.d.\ Rademacher random variables (taking
values $\pm1$ with probability $1/2$ each). Resolving a conjecture
of Vu, we prove that the permanent of $M_{n}$ has magnitude $n^{n/2+o(n)}$
with probability $1-o(1)$. Our result can also be extended
to more general models of random matrices. In our proof, we build on and extend some techniques introduced by Tao and Vu, studying the evolution of permanents of submatrices in a random matrix process.

\vspace{0.2cm}

\noindent\textbf{Keywords:} random symmetric matrix, permanent.\\
\noindent\textbf{MSC Subject Classification:} 60B20, 15A15.
\end{abstract}

\section{Introduction}

Two of the most basic matrix parameters are the \emph{determinant}
and the \emph{permanent}: for an $n\times n$ matrix $M=(x_{i,j})_{i,j}$,
define
\begin{equation}
\det(M)=\sum_{\pi\in S_{n}}\operatorname{sign}(\pi)\prod_{i=1}^{n}x_{i,\pi(i)}\quad\textnormal{and}\quad\per(M)=\sum_{\pi\in S_{n}}\prod_{i=1}^{n}x_{i,\pi(i)}.\label{eq:det-per}
\end{equation}
A central direction of research in probabilistic combinatorics and random matrix
theory is to understand the determinant and permanent of different
types of random matrices. For example, let $A_{n}$ be an $n\times n$ matrix whose entries are i.i.d.\ Rademacher-distributed
random variables, taking values $\pm1$ with probability $1/2$ each (this is often called a \emph{random
Bernoulli matrix}).
A classical theorem of Koml\'os~\cite{Kom67} (perhaps the foundational
theorem in discrete random matrix theory) is that $\Pr(\det A_{n}=0)=o(1)$
as $n\to\infty$. That is to say, $A_n$ is \emph{asymptotically
almost surely} nonsingular. Since then, there has been intensive effort
to refine our understanding of the singularity probability
(see \cite{KKS95,TV06,TV07,RV08,BVW10}), culminating in a recent breakthrough of Tikhomirov~\cite{Tik20} proving that $\Pr(\det A_{n}=0)=2^{-n+o(n)}$.
The problem of estimating the order of magnitude of $\det A_{n}$ has also received significant attention:
Tao and Vu~\cite{TV06} proved that with probability $1-o(1)$
we have $\left|\det A_{n}\right|=n^{n/2+o(n)}$, and later
Nguyen and Vu~\cite{NV14} proved a central limit theorem for $\log |\det A_{n}|$ (see also \cite{Gir79,Gir97}).

Most of the above-mentioned results generalise readily to more general types
of random matrices, where the entries are independently sampled from
any subgaussian distribution. There has also been intensive
interest in random matrices with dependence between the entries. Perhaps the most prominent examples are \emph{symmetric} random matrices. Let $M_{n}$ be the random matrix whose entries on and above the diagonal are independent Rademacher random variables, and the entries below the diagonal are chosen to make the matrix symmetric (equivalently, we can choose a random matrix $A_n$ as above and condition on the event that $A_{n}$ is symmetric). The study of random symmetric matrices has necessitated the development of new tools, but by now there is a fairly complete understanding of the determinant of a random symmetric matrix with Rademacher entries. The fact that $\Pr(\det M_{n}=0)=o(1)$ was first proved by Costello, Tao and Vu~\cite{CTV06} (see also \cite{Fer20}), and stronger
estimates on $\Pr(\det M_{n}=0)$ were obtained by several
authors~\cite{Ngu12b,Ver14,FJ19,CMMM19}. It is also known that  with probability $1-o(1)$ we have $|\det M_{n}|=n^{n/2+o(n)}$
(this follows from work on the least singular value of $M_{n}$ due
to Nguyen~\cite{Ngu12a} and Rudelson~\cite{Ver14}, together with Wigner's celebrated
semicircle law~\cite{Wig55,Wig58}), and a central limit theorem for $\log |\det M_{n}|$
was proved by Bourgade and Mody~\cite{BM19} (see also \cite{TV12}).

It is widely believed that for all the above-mentioned theorems concerning determinants of random matrices (symmetric or not), there should be analogous theorems for permanents. However, the permanent appears to be a much more challenging parameter to study. For example, while the determinant encodes information about linear dependence and can be interpreted as the (signed) volume of a certain parallelepiped, it only seems possible to attack the permanent from a ``combinatorial'' point of view, directly considering the formal definition in \cref{eq:det-per}. The fact that the permanent is harder to study is maybe not surprising, since (in contrast to the determinant) the permanent of a matrix is $\#$P-hard to compute (as was famously proved by Valiant~\cite{Val79}). Even the analogue of the singularity problem, to show that $\Pr(\per A_n=0)=o(1)$, was open for a long time. In 2009, Tao and Vu~\cite{TV09} finally resolved this problem and also estimated the typical magnitude of $\per A_{n}$: namely, they proved that asymptotically almost surely $\left|\per A_{n}\right|=n^{n/2+o(n)}$. Perhaps surprisingly, permanents of random matrices turn out to be of interest in quantum computing: Aaronson and Arkhipov~\cite{AA13} proved that quantum computers cannot be efficiently simulated by classical computers, conditional on a conjecture which strengthens the aforementioned Tao--Vu permanent theorem (see also \cite{Aar10,EM18,LM19}).

In this paper we study the permanent of random \emph{symmetric} matrices. More precisely, we estimate the typical magnitude of the permanent of a random symmetric matrix with Rademacher entries.
\begin{thm}\label{thm:per-magnitude}
\label{thm:main-0}Asymptotically almost surely, $|\per M_n|=n^{n/2+o(n)}$.
\end{thm}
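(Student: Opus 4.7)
\emph{Upper bound via second moment.} I would first establish $|\per M_n| \le n^{n/2+o(n)}$ a.a.s.\ by computing
\[
\E[\per(M_n)^2] = \sum_{\pi,\sigma \in S_n} \E\left[\prod_{i=1}^n (M_n)_{i,\pi(i)} (M_n)_{i,\sigma(i)}\right].
\]
In the symmetric Rademacher setting each summand is $0$ or $1$, taking the value $1$ precisely when every unordered pair $\{i,j\}$ occurs an even number of times in the combined multiset of positions. A combinatorial count of such $(\pi,\sigma)$ pairs should yield $\E[\per(M_n)^2] = n^{n+o(n)}$, and then Markov's inequality gives the upper bound.

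\emph{Lower bound via a row-exposure process.} The more delicate direction is $|\per M_n| \ge n^{n/2-o(n)}$. Following the hint in the abstract, I would expose one row (and, by symmetry, one column) at a time. Let $M_k$ denote the top-left $k \times k$ principal submatrix of $M_n$ and set $X_k = \per M_k$. Expanding the permanent of $M_{k+1}$ along its last row/column, using symmetry and $R_i^2=1$ for the new off-diagonal Rademacher row $R=(R_1,\dots,R_k)$, one obtains
\[
X_{k+1} = c_{k+1} X_k + \sum_{i=1}^k \per\bigl(M_k^{(i,i)}\bigr) + 2 \sum_{1\le i < j \le k} R_i R_j \per\bigl(M_k^{(i,j)}\bigr),
\]
where $c_{k+1}$ is the new diagonal entry and $M_k^{(i,j)}$ is the minor of $M_k$ obtained by deleting row $i$ and column $j$. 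Conditional on $M_k$, this is a Rademacher quadratic form in $R$ plus a deterministic shift. The target is a per-step estimate of the form $|X_{k+1}|^2 \ge k \cdot |X_k|^2 \cdot n^{-o(1)}$ with probability $1-n^{-\omega(1)}$; telescoping over $k$ then produces $|X_n|^2 \ge (n!)^{1-o(1)} = n^{n-o(n)}$, as required.

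\emph{Anti-concentration of the quadratic form.} The heart of the argument is a small-ball estimate: conditional on a typical $M_k$, the probability that $|X_{k+1}|$ falls below $k^{1/2-o(1)}|X_k|$ should be super-polynomially small. Heuristically this is plausible because the quadratic form
\[
Q(R) = 2\sum_{i<j} R_i R_j \per\bigl(M_k^{(i,j)}\bigr)
\]
has variance $4\sum_{i<j} \per(M_k^{(i,j)})^2$, which, by an inductive estimate on the minors, should be of order $k \cdot |X_k|^2$, so that $|Q(R)|$ is typically of order $k^{1/2}|X_k|$. To upgrade this from a heuristic to a tail bound that survives the deterministic shift, I would invoke a quadratic Littlewood--Offord inequality in the spirit of Costello--Tao--Vu / Meka--Nguyen--Vu; such inequalities bound the small-ball probability of a Rademacher quadratic form in terms of structural ``spread'' properties of its coefficient matrix. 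The task then reduces to showing that, with overwhelming probability over $M_k$, the matrix $\bigl(\per M_k^{(i,j)}\bigr)_{i,j}$ is sufficiently unstructured --- for instance, that most entries have magnitude $k^{(k-1)/2 - o(k)}$. This structural input is naturally obtained by strong induction, by applying a slightly weakened version of the theorem to $(k-1)\times(k-1)$ submatrices.

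\emph{Main obstacle.} The chief difficulty is that the ``coefficients'' $\per(M_k^{(i,j)})$ of the random quadratic form are far from independent: any two of these minor permanents share almost all the entries of $M_k$, so any Littlewood--Offord argument must contend with strong correlations among the coefficients as random variables. The standard way to circumvent this is via a decoupling/conditioning scheme: expose a carefully chosen small subset of the Rademacher variables $R_i$ first (for instance by a swapping trick), so that $Q(R)$ becomes an effectively linear form in the remaining Rademachers, and then apply the classical Erd\H{o}s--Littlewood--Offord inequality to a random \emph{linear} combination whose coefficients are themselves permanents of somewhat smaller submatrices. Making this decoupling compatible with the inductive control on minor permanents, and keeping all error terms at the $n^{o(n)}$ scale throughout the induction, is where I expect essentially all of the paper's technical work to be concentrated.
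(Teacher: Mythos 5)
Your upper bound via the second moment and Markov is exactly what the paper does (\cref{lem:var}, \cref{thm:var}), so that direction is fine.

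The lower bound proposal has several genuine gaps. The central one is structural: you track the single quantity $X_k = \per M_k$ and aim for a per-step growth $|X_{k+1}| \gtrsim k^{1/2-o(1)}|X_k|$ with super-polynomially small failure probability. Neither half of this is available. The quadratic anti-concentration inequality of Meka--Nguyen--Vu (which is what the paper uses, as \cref{lem:MNV}) yields a bound of the form $(\log\nu)^C/\nu^{1/2}$ in terms of the matching number $\nu$ of the large-coefficient graph; with $\nu$ at most of order $k$, this is only polynomially small per step, not super-polynomially small. And there is no way to recover: once $\per M_k$ is accidentally small (or zero — which can happen), a scheme that tracks only the one number $X_k$ has nothing to fall back on. This is precisely the obstruction Tao and Vu themselves flagged: in the non-symmetric setting one has $\binom{n}{k}$ size-$k$ submatrices inside the first $k$ exposed rows to play with, while here $M_k$ has only one $k\times k$ principal submatrix. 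The paper circumvents this by tracking heaviness of \emph{many} submatrices of $M_k$ of dimension substantially smaller than $k$, accepting that a constant fraction can ``die'' at each step (\cref{lem:grow-single-minor}, built from \cref{lem:grow-large-minors}, and then \cref{lem:endgame-step}); your scheme has no such redundancy.

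Second, your inductive control of the coefficients $\per\bigl(M_k^{(i,j)}\bigr)$ does not close. For $i\ne j$ the matrix $M_k^{(i,j)}$ is not a symmetric submatrix (you delete row $i$ and column $j\ne i$), so you cannot invoke ``a slightly weakened version of the theorem'' for symmetric matrices on it. More importantly, even granting control of individual minors, the paper identifies a cancellation phenomenon with no analogue in the Tao--Vu setting: the quadratic form you write down can have essentially all of its multilinear coefficients small \emph{even when the relevant minors are individually large}, because the coefficient of $x_ix_j$ is the sum $\per M_k^{(i,j)} + \per M_k^{(j,i)}$, which can nearly vanish. Your proposal acknowledges that the coefficients are correlated but does not offer a mechanism to rule out this degeneracy; the paper's \cref{lem:per-no-cancel} and the careful choice of which rows/columns to add (\cref{claim-sets-A-B-prime}) are specifically designed to sidestep it, and this is where a real idea is needed. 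Finally, the ``decoupling/conditioning scheme'' you sketch to linearize $Q(R)$ is roughly what the paper does in the non-easy/non-short case of \cref{lem:endgame-step}, but it only yields a polynomially small failure bound and requires the surrounding many-submatrix machinery and vertex-cover/good-variable bookkeeping to be made to work. In short: your heuristics for why the growth rate should be $\sqrt{k}$ per step are sound, but the single-trajectory induction cannot be carried out, and the claimed super-polynomial tail is out of reach for quadratic Littlewood--Offord.
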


The study of the permanent of a random symmetric matrix seems to have first been explicitly suggested by Tao and Vu (see \cite[Remark~1.6]{TV09}). They observed that their arguments for the permanent of a (not necessarily symmetric) random matrix ``do not seem to easily yield any non-trivial result for the permanent of a random \emph{symmetric} Bernoulli matrix''. The statement of \cref{thm:per-magnitude} has been conjectured by Vu in 2009 (see \cite{Vu09}). He also mentioned the conjecture in a recent survey \cite[Conjecture 6.11]{Vu20}, and described it as ``the still missing piece of the picture'' regarding determinants and permanents of random discrete matrices.

\cref{thm:per-magnitude} is actually a combination of two different results. First, the following proposition gives an upper bound on $|\per M_n|$.

\begin{prop}\label{thm:var}
For any $\varepsilon>0$, if $n$ is sufficiently large with respect to $\varepsilon$, we have
\[\Pr\left(|\per M_{n}|\ge n^{n/2+\varepsilon n}\right)\le n^{-\varepsilon n}.\]
\end{prop}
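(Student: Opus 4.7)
The plan is to derive the tail bound from Markov's inequality applied to $(\per M_n)^2$. Expanding
\[
(\per M_n)^2 \;=\; \sum_{\pi,\sigma \in S_n} \prod_{i=1}^n M_{i,\pi(i)} M_{i,\sigma(i)},
\]
and using that the entries $M_{ij} = M_{ji}$ are independent Rademacher variables (so $\E[M_{ij}^k]$ equals $1$ or $0$ according as $k$ is even or odd), the $(\pi,\sigma)$-term has expectation $1$ precisely when every unordered pair (or diagonal singleton) appears with even total multiplicity in the combined multiset $\{\{i,\pi(i)\}\}_{i=1}^n \cup \{\{i,\sigma(i)\}\}_{i=1}^n$, and has expectation $0$ otherwise. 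My target bound is $\E[(\per M_n)^2] = n^{O(1)} \cdot n!$, which together with $n! = n^{n}e^{-n+o(n)}$ gives
\[
\Pr\bigl(|\per M_n| \ge n^{n/2+\varepsilon n}\bigr) \;\le\; \frac{\E[(\per M_n)^2]}{n^{n+2\varepsilon n}} \;\le\; n^{O(1)} e^{-n} n^{-2\varepsilon n} \;\le\; n^{-\varepsilon n}
\]
for $n$ sufficiently large.

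To count the surviving pairs I would attach a parity pattern to each permutation. A direct check shows that the parity contribution of $\pi$ at an off-diagonal pair $\{a,b\}$ equals $1$ exactly when $\{a,b\}$ lies on a cycle of $\pi$ of length $\ge 3$, while its contribution at a diagonal $\{a\}$ equals $1$ iff $a$ is a fixed point; edges of $2$-cycles contribute evenly and so are parity-invisible. Thus the pattern of $\pi$ is captured by $T(\pi) = (F(\pi), G(\pi))$, where $F(\pi)$ is the fixed-point set and $G(\pi)$ is the $2$-regular simple graph on $V_3 \subseteq [n]\setminus F(\pi)$ consisting of the undirected cycles of $\pi$ of length $\ge 3$. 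The surviving pairs are exactly those with $T(\pi) = T(\sigma)$, and given such a pattern $T$ the number of $\pi$ realising it is $N(T) = 2^{c(G)}(|V_2|-1)!!$, where $c(G)$ is the number of cycles of $G$ and $V_2 = [n]\setminus F\setminus V_3$ (choose $F$, orient each cycle of $G$ in one of two ways, then place a perfect matching on $V_2$; the count is $0$ unless $|V_2|$ is even). Hence $\E[(\per M_n)^2] = \sum_T N(T)^2$.

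Finally I would evaluate this sum by exponential generating functions, factoring over the three vertex types (fixed points; vertices on $\ge 3$-cycles weighted by $4^{c(G)}$; vertices in $2$-cycles weighted by $((|V_2|-1)!!)^2$):
\[
\sum_{n \ge 0} \frac{\E[(\per M_n)^2]}{n!}\, z^n \;=\; e^z \cdot (1-z)^{-2} e^{-2z - z^2} \cdot (1 - z^2)^{-1/2} \;=\; \frac{e^{-z - z^2}}{(1-z)^{5/2} (1+z)^{1/2}}.
\]
Here the $2$-cycle factor comes from $\sum_m \binom{2m}{m}(z^2/4)^m = (1-z^2)^{-1/2}$, while the $\ge 3$-cycle factor comes from applying the exponential formula to the single-cycle EGF $\tfrac12\bigl(-\log(1-z) - z - z^2/2\bigr)$ with weight $4$ per cycle. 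Standard singularity analysis at $z=1$ yields $[z^n] = \Theta(n^{3/2})$, so $\E[(\per M_n)^2] = \Theta(n^{3/2})\cdot n!$, much more than needed. The main obstacle is this characterisation step: in the independent-entry case only $\pi = \sigma$ survives (giving $\E[(\per A_n)^2] = n!$ immediately), but symmetry of $M_n$ creates a rich family of extra non-vanishing pairs, and the naive estimate $\max_T N(T)\cdot n!$ is far too weak --- it grows like $(n!)^{3/2}$, which after Markov gives nothing useful. The parity-pattern observation (in particular, that $2$-cycles are parity-invisible) is what makes a clean EGF decomposition possible.
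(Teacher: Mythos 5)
Your proposal is correct, and follows the same overall strategy as the paper: bound $\E[(\per M_n)^2]$ and apply Markov's inequality. You identify the same invariant governing which pairs $(\pi,\sigma)$ survive the expectation --- the paper records this as the pair $(I_\pi,F_\pi)$ (indices in $2$-cycles, together with the family $\{i,\pi(i)\}$ for the remaining $i$), which encodes exactly your $(F(\pi),G(\pi))$ (fixed-point set plus the undirected graph of cycles of length $\ge 3$). The difference is entirely in the final enumeration. The paper is content with a crude direct count: it partitions by $|I_\pi|=k$, bounds $|\mathcal{Q}_k|\le\binom nk k^{k/2}(n-k)!$ and the number of matching partners by $2^{n-k}k^{k/2}$, and concludes $\E[(\per M_n)^2]\le n^{n+o(n)}$, which is all that Markov needs. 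You instead observe that the count factors over vertex types and evaluate $\sum_T N(T)^2$ exactly via an exponential generating function, $\frac{e^{-z-z^2}}{(1-z)^{5/2}(1+z)^{1/2}}$, getting the sharp asymptotic $\E[(\per M_n)^2]=\Theta(n^{3/2})\,n!$ (I checked the EGF agrees with a direct computation for $n=1,2,3$). Your bound is stronger by a factor of roughly $e^n$, which is overkill for the proposition but is a nice refinement --- the paper's bound does not pin down whether the second moment is closer to $n!$ or to $n^n$, while yours does. Either route closes the argument.
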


\cref{thm:var} is easily proved using an estimate for $\E[ (\per M_n)^2]$ and Markov's inequality; see \cref{sec:var}. The main role of this paper is to prove the following lower bound on $|\per M_n|$.

\begin{thm}
\label{thm:main}There is a positive constant $c>0$ such that for any  $\varepsilon>0$ the following holds. If $n$ is sufficiently large  with respect to $\eps$, we have
\[\Pr\left(|\per M_{n}|\le n^{n/2-\varepsilon n}\right)\le n^{-c}.\]
\end{thm}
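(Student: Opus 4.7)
My plan is to reveal $M_n$ by growing its principal top-left submatrices $M_1\su M_2\su\dots\su M_n$ and to track $P_k:=\per M_k$ step by step. A double application of the Laplace expansion, together with the identities $x_{ki}=x_{ik}$, $x_{ki}^{2}=1$ and $\per M_{k-1}^{(i,j)}=\per M_{k-1}^{(j,i)}$ (where $M_{k-1}^{(i,j)}$ is $M_{k-1}$ with row $i$ and column $j$ deleted), yields the key identity
\[
P_k \;=\; x_{kk}\,P_{k-1} \;+\; \sum_{i=1}^{k-1}\per M_{k-1}^{(i,i)} \;+\; 2\!\sum_{1\le i<j\le k-1}\!x_{ki}\,x_{kj}\,\per M_{k-1}^{(i,j)}.
\]
Conditional on $M_{k-1}$, the right-hand side is a quadratic polynomial in the fresh Rademachers $x_{k,1},\dots,x_{k,k}$; obtaining sharp anti-concentration for this quadratic form is the heart of the argument.

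The skeleton of the proof will be an induction on $k$ with a good event $\mathcal E_k\in\sigma(M_k)$ implying both \emph{(a)} a size bound $|P_k|\ge k^{k/2-\eps k}$, and \emph{(b)} a structural condition on the array of minor-permanents $(\per M_k^{(i,j)})_{i,j}$ strong enough to drive the anti-concentration step at level $k+1$: for instance, that many of the off-diagonal entries $\per M_k^{(i,j)}$ have magnitude at least $k^{k/2-O(\eps k)}$. The goal is to establish $\Pr(\mathcal E_k^c\cap\mathcal E_{k-1})\le n^{-1-c'}$ uniformly in $k$ for some absolute $c'>0$, so that a union bound over $k$ plus an easy base case at some $k_0=o(n)$ closes the induction.

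For the inductive step, on $\mathcal E_{k-1}$ I would apply a quadratic Littlewood--Offord inequality of Costello--Tao--Vu type, using the standard decoupling trick that partitions $\{1,\dots,k-1\}$ into two halves and conditions on one: this reduces the problem to a linear anti-concentration estimate whose coefficients are random sums of off-diagonal minor-permanents, and condition \emph{(b)} is precisely what ensures that these coefficients are spread enough to give a sharp density bound near the threshold $k^{k/2-\eps k}$. Combined with the upper bound of \cref{thm:var}, this delivers \emph{(a)} at step $k$. Property \emph{(b)} at step $k$ is easier for the principal minors $M_k^{(i,i)}$, since those are themselves symmetric Rademacher matrices from the same model (so an inductive application of \emph{(a)} in smaller dimension suffices for them); the non-principal minors require a separate argument.

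The main obstacle, and where I expect most of the technical work to lie, is exactly this control of the non-principal minor-permanents $\per M_{k-1}^{(i,j)}$ with $i\ne j$. These minors are not symmetric, yet their entries remain correlated (through the shared deleted row and column), so neither the symmetric nor the non-symmetric Tao--Vu permanent theorem can be invoked as a black box; at the same time, the $\binom{k}{2}$ such minor-permanents are highly coupled through their common matrix $M_{k-1}$, so naive union bounds across $(i,j)$ are wasteful. Balancing the strength of \emph{(b)} --- strong enough to yield the desired per-step saving of $n^{-1-c'}$ via a quadratic Littlewood--Offord inequality, yet weak enough to be maintained with overwhelming probability --- appears to require new ideas beyond the direct analogue of Tao and Vu's treatment of the non-symmetric permanent.
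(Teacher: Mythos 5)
Your plan tracks the full permanent $P_k=\per M_k$ across all $k$, but the paper deliberately does \emph{not} do this, and the reasons reveal two genuine gaps in your approach.

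First, by tracking $P_k$ you are forced to work with principal submatrices, i.e.\ $A=B=\{1,\dots,k\}$. In that case the degree-2 coefficients of your quadratic form are $2\per M_{k-1}^{(i,j)}$ for $i\neq j$, and there is no deterministic guarantee that even one of them is large when $P_{k-1}$ is large --- they are permanents of strictly smaller submatrices and can all vanish simultaneously. You correctly identify that controlling them ``requires new ideas''; the paper does not supply those ideas, it sidesteps the problem. It tracks rectangular submatrices $M_k[A,B]$ with $A\neq B$, and \cref{lem:per-no-cancel} shows that for such a heavy submatrix one can always choose which two indices to add so that a specific degree-2 coefficient inherits the heaviness of $\per M[A,B]$ itself (up to a factor 2) rather than depending on uncontrolled deeper minors. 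This is a deterministic, elementary argument that only works because $A\neq B$; your choice $A=B$ forecloses it. Combined with this is the ``grow by two'' observation (\cref{fact:per-double-expansion}): adding one new row and column to a $(k-1)\times(k-1)$ submatrix yields a $(k+1)\times(k+1)$ one, so a submatrix missing $L$ rows and columns can be filled out in only $L$ symmetric-exposure steps.

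Second, your induction scheme requires a per-step failure probability $\Pr(\mathcal E_k^c\cap\mathcal E_{k-1})\le n^{-1-c'}$ so that a union bound over $k=1,\dots,n$ closes. But quadratic anti-concentration for a Rademacher form in $n$ variables cannot beat $n^{-1/2+o(1)}$ (this is the content of the Meka--Nguyen--Vu bound used in the paper, and is sharp), so $\sum_k\Pr(\text{fail at step }k)$ will be of order $n^{1/2}$, not $o(1)$. The paper escapes this in two ways you would need to adopt: during the bulk growth phase (\cref{lem:grow-single-minor-weak}) it tracks a large \emph{ensemble} of candidate submatrices simultaneously and only needs a fraction of them to survive each step, which by Chernoff gives exponentially small per-step failure; and the quadratic anti-concentration is only invoked in the final phase (\cref{lem:endgame-step}), which runs for merely $L=O(\log n)$ steps and again works with $m\gtrsim\sqrt n$ submatrices at once (losing a constant fraction per step), so that $L\cdot m^{-1/24}\le n^{-c}$. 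A single-trajectory induction over all $n$ steps, with one quadratic form per step, cannot reach the stated $n^{-c}$ bound.
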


We remark that the constant $c$ in \cref{thm:main} can be made explicit. For example, $c=1/150$ certainly suffices, though this can be improved substantially simply by optimising constants throughout the proof. However, without new ideas our methods do not seem to be capable of proving the statement of \cref{thm:main} with any $c\ge 1/2$. This state of affairs is essentially the same as for the non-symmetric case previously considered by Tao and Vu~\cite{TV09}.

Of course, \cref{thm:main} also shows that the probability of having $\per M_n=0$ is polynomially small. Before this paper no nontrivial bounds for this probability were known, except when $n=2^m-1$ with $m\in \NN$, where for elementary number-theoretic reasons it is actually impossible to have $\per M_n=0$ (see \cite{SS83}\footnote{Actually, this result is part of an extensive body of research concerning permanents of (non-random) matrices with $\pm1$ entries; see for example \cite{Wan74,BG19,BGT15,BGT17,SS83,Wan05,KS83,Sei84,KS84}.}).

Finally, we remark that the methods used to prove \cref{thm:main-0} are quite robust, and analogous theorems can be proved for much more general distributions. For example, consider any fixed real probability distributions $\mu$ and $\nu$, and let $M_n^{\mu,\nu}$ be the random symmetric matrix whose diagonal entries have distribution $\nu$ and whose off-diagonal entries have distribution $\mu$ (and whose entries on and above the diagonal are mutually independent). With some fairly mild assumptions on $\mu$ and $\nu$ it is a routine matter to prove an analogue of \cref{thm:var} for $M_n^{\mu,\nu}$, and in \cref{sec:concluding} we sketch how to make some minor adaptations to the proof of \cref{thm:main} to obtain a version that holds for very general distributions (we only require that $\mu$ has nontrivial support). In particular, one can prove an analogue of \cref{thm:main-0} for random symmetric Gaussian matrices such as the Gaussian Orthogonal Ensemble (GOE).

\textbf{Notation.} In this paper, we use the notation $\NN=\{1,2,\dots\}$ for the positive integers. All logarithms are to base $e$. For functions $f:\NN\to \RR$ and $g:\NN\to \RR_{>0}$, we write $f=o(g)$ if $f(n)/g(n)\to 0$ as $n\to\infty$.

\section{Proof of \texorpdfstring{\cref{thm:var}}{Proposition~\ref{thm:var}}: the second moment of the permanent}\label{sec:var}

In this section we provide the simple proof of \cref{thm:var}. It will be an immediate consequence of Markov's inequality and the following lemma.

\begin{lem}\label{lem:var}
$\E[ (\per M_n)^2]\le n^{n+o(n)}$.
\end{lem}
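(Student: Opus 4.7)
The plan is to expand the square, use the independence of the Rademacher entries of $M_n$ above the diagonal to isolate the nonvanishing terms, and then perform a direct combinatorial count.

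Expanding gives
\[\E[(\per M_n)^2]=\sum_{\pi,\sigma\in S_n}\E\left[\prod_{i=1}^n x_{i,\pi(i)}\,x_{i,\sigma(i)}\right].\]
Since the variables $x_{i,j}$ with $i\le j$ are i.i.d.\ Rademacher (and $x_{i,j}=x_{j,i}$ by symmetry), each term equals $1$ if every variable $x_{i,j}$ appears to an even power in the product and $0$ otherwise. It helps to associate to a permutation $\pi$ the multiset of undirected pairs $\{i,\pi(i)\}$, $i\in[n]$: a fixed point $\pi(i)=i$ contributes the loop $\{i,i\}$ once; a 2-cycle $(jk)$ of $\pi$ contributes $\{j,k\}$ twice (an even multiplicity, so irrelevant to parity); and each cycle of length $\ell\ge 3$ contributes its $\ell$ undirected edges once each. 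Thus $(\pi,\sigma)$ contributes precisely when $\pi$ and $\sigma$ share the same fixed-point set and their $\ge 3$-cycles induce the same undirected edge set on $[n]$.

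For such a pair, $\sigma$ is forced to be the identity on the fixed points of $\pi$; can be an arbitrary perfect matching on the set $U(\pi)$ of vertices lying in 2-cycles of $\pi$ (giving $(|U(\pi)|-1)!!$ choices, independently of what $\pi$ does on $U(\pi)$); and is one of the $2^{k(\pi)}$ permutations obtained by independently choosing an orientation of each of the $k(\pi)$ undirected cycles making up $\pi$ on its $\ge 3$-cycle support. Hence
\[\E[(\per M_n)^2]=\sum_{\pi\in S_n}(|U(\pi)|-1)!!\cdot 2^{k(\pi)}.\]

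To bound this sum, use $2^{k(\pi)}\le 2^{n/3}$ (since every $\ge 3$-cycle has at least three vertices) and group $\pi$ by $u=|U(\pi)|$: there are at most $\binom{n}{u}(u-1)!!(n-u)!$ permutations with $|U|=u$ (choose $U$, the matching on $U$, and an arbitrary permutation of the complement). This yields
\[\E[(\per M_n)^2]\le 2^{n/3}\sum_{u\,\text{even}}\binom{n}{u}\bigl((u-1)!!\bigr)^2(n-u)!=2^{n/3}\cdot n!\sum_{u\,\text{even}}\frac{\bigl((u-1)!!\bigr)^2}{u!}.\]
Since $((u-1)!!)^2/u!=\binom{u}{u/2}/2^u\le 1$, the inner sum is $O(n)$; combined with $n!=n^{n+o(n)}$ and $2^{n/3}=n^{o(n)}$ this gives the desired bound $n^{n+o(n)}$. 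The only step requiring genuine thought is the characterization of the nonvanishing pairs $(\pi,\sigma)$ via their fixed points and $\ge 3$-cycle edge sets; once that is in hand, the rest is routine combinatorial bookkeeping.
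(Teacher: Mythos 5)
Your proof is correct and takes essentially the same route as the paper: expand the square, characterize the nonvanishing pairs $(\pi,\sigma)$ by an invariant (your fixed-point set plus $\ge 3$-cycle edge set is exactly the paper's pair $(I_\pi,F_\pi)$), and bound the count by grouping permutations according to the size of their $2$-cycle support. You give exact counts (e.g.\ $(|U(\pi)|-1)!!\cdot 2^{k(\pi)}$) where the paper is content with looser bounds like $k^{k/2}$ and $2^{n-k}$, but this is a cosmetic difference and both land at $n^{n+o(n)}$.
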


\begin{proof}
Write $x_{i,j}$ for the $(i,j)$ entry of $M_{n}$. For a permutation
$\pi\in S_n$, let $X_{\pi}=\prod_{i=1}^n x_{i,\pi(i)}$,
so that $\per M_{n}=\sum_{\pi}X_{\pi}$. It will not be necessary for the proof, but we remark that
\[
\E X_{\pi}=\begin{cases}
1 & \text{if }\pi\text{ consists only of 2-cycles,}\\
0 & \text{otherwise}.
\end{cases}
\]
Furthermore, let $I_{\pi}\subseteq\{1,\dots,n\}$
be the set of indices which appear in 2-cycles of $\pi$, and let
$F_{\pi}$ be the family of sets $\{i,\pi(i)\}$, for
$i\notin I_{\pi}$. Then for two permutations $\pi,\pi'\in S_n$,
we have 
\[
\E [X_{\pi}X_{\pi'}]=\begin{cases}
1 & \text{if }(I_{\pi},F_{\pi})=(I_{\pi'},F_{\pi'}),\\
0 & \text{otherwise}.
\end{cases}
\]
For $k=0,\dots,n$, let $\mathcal{Q}_{k}$ be the set of all permutations $\pi\in S_n$
satisfying $|I_{\pi}|=k$, and note that $|\mathcal{Q}_{k}|\le \binom{n}{k}k^{k/2}(n-k)!\le 2^nk^{k/2}n^{n-k}$. Indeed, for any choice of a set $I\subseteq \{1,\dots,n\}$ of size $k$, there are at most $k^{k/2}$ ways to partition $I$ into 2-cycles, and at most $(n-k)!$ ways to choose a permutation of $\{1,\dots,n\}\setminus I$.

Now, for any $0\leq k\leq n$ and any $\pi\in\mathcal{Q}_{k}$, there are at most $2^{n-k}k^{k/2}$
choices of $\pi'$ satisfying $(I_{\pi},F_{\pi})=(I_{\pi'},F_{\pi'})$.
Indeed, for such $\pi'$, the restriction of $\pi'$ to $I_{\pi}$
must be a permutation of $I_\pi$ consisting only of 2-cycles (so there are at most $k^{k/2}$ ways to choose this restriction), and for each $i\notin I_{\pi}$ we
must have $\pi'(i)\in\{\pi(i),\pi^{-1}(i)\}$. It follows that 
\[
\E[(\per M_{n})^2]=\sum_{\pi,\pi'\in S_n}\E X_{\pi}X_{\pi'} \le\sum_{k=0}^n|\mathcal{Q}_{k}|\cdot2^{n-k}k^{k/2}\le\sum_{k=0}^n 4^n k^k n^{n-k} \le n^{n+o(n)},
\]
as claimed.
\end{proof}

\begin{proof}[Proof of \cref{thm:var}]
Let $\eps>0$, and suppose that $n$ is sufficiently large such that the $o(n)$-term in \cref{lem:var} is at most $\eps n$. Then we have $\E[ (\per M_n)^2]\le n^{n+\eps n}$ and consequently by Markov's inequality
\[\Pr\left(\vert\per M_n\vert\geq n^{n/2+\eps n}\right)=\Pr\left((\per M_n)^2\geq n^{n+2\eps n}\right)\leq \frac{\E[ (\per M_n)^2]}{n^{n+2\eps n}}\leq n^{-\eps n},\]
as desired.
\end{proof}

\section{Structure of the proof of \texorpdfstring{\cref{thm:main}}{Theorem~\ref{thm:main}}}

The rest of this paper is devoted to the proof of \cref{thm:main}. In this section we outline the high-level strategy of the proof, stating two key lemmas and deducing \cref{thm:main} from them.

We couple the distributions of the matrices $M_n$ for all $n\in \NN$ by viewing each $M_{n}$ as containing the first $n$ rows and columns
of an infinite random symmetric matrix (with Rademacher entries). Say that subsets of a given ground
set are \emph{complement-disjoint} if their complements are disjoint.
For $A,B\su \{1,\dots,n\}$, let $M_{n}[A,B]$ be the submatrix of $M_{n}$ consisting
of the rows in $A$ and the columns in $B$. For $\lambda\geq 0$, we say that a matrix is
\emph{$\lambda$-heavy} if its permanent has absolute value at least
$\lambda$.

The following lemma shows that with high probability there exists a heavy submatrix of $M_n$ consisting of almost all the rows and columns of $M_n$, and moreover we have some control over which rows and columns are not included. Roughly speaking, it is proved by studying how permanents of submatrices evolve in the sequence of random matrices $M_1,\dots,M_n$.

\begin{lem}
\label{lem:grow-single-minor}There is a positive constant $c>0$ such that for any  $\varepsilon>0$ the following holds. Let $n\in \NN$ be sufficiently large with respect to $\eps$, and let $L=\lfloor (\log n)/10\rfloor$. Let $X$ and $Y$ be disjoint subsets of $\{ 1,\dots,n\}$ with sizes $\vert X\vert=L$ and $\vert Y\vert=3L$. Then with probability at least $1-(1/4)\cdot n^{-c}$ there is a set $B$ satisfying $|B|=n-L$ and $\{ 1,\dots,n\} \setminus Y\su B\su \{ 1,\dots,n\}$, such that $M_{n}[\{ 1,\dots,n\} \setminus X,B]$ is $n^{(1-\eps)n/2}$-heavy.
\end{lem}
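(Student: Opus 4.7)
My plan is to prove the lemma by induction on the size of the matrix, tracking the permanent of a carefully chosen square submatrix of $M_k$ as $k$ grows from a small base up to $n$. The main tool is an anti-concentration inequality (Erd\H{o}s--Littlewood--Offord, or a more refined Berry--Esseen type bound) applied at each step to lower bound the permanent of the extended submatrix. This is the symmetric analog of the Tao--Vu strategy from \cite{TV09}.

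I would reveal the entries of $M_n$ column by column: at step $k$, the new independent Rademacher variables $x_{1,k}, \dots, x_{k,k}$ become available. As an inductive invariant, I would maintain that $M_k$ contains a heavy square submatrix $M_k[A_k, B_k]$ whose row set $A_k \subseteq \{1,\dots,k\} \setminus X$ grows toward $\{1,\dots,n\} \setminus X$, and whose column set $B_k$ omits at most $L$ indices from $Y \cap \{1,\dots,k\}$. Crucially, the choice of which indices of $Y$ to omit is made adaptively as the process unfolds; the resulting $\binom{3L}{L}$ configurations provide polynomial slack that absorbs per-step failure losses. The invariant should also carry a secondary structural property, namely that many of the cofactor permanents $\per M_k[A_k \setminus \{i\}, B_k \setminus \{j\}]$ are themselves heavy---this is what feeds the anti-concentration step.

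The inductive step is the heart of the argument. For $k \notin X \cup Y$ I would extend both sets by $k$ and expand the permanent along the new row,
\[
\per M_k[A_{k-1} \cup \{k\}, B_{k-1} \cup \{k\}] = \sum_{j \in B_{k-1} \cup \{k\}} x_{k,j}\cdot \per M_k[A_{k-1}, (B_{k-1} \cup \{k\}) \setminus \{j\}],
\]
and apply Erd\H{o}s--Littlewood--Offord to the resulting Rademacher linear combination, using the heaviness of many cofactor permanents guaranteed by the secondary invariant. The cases $k \in X$ (grow only $B$) and $k \in Y$ (choose to grow $B$ or not) are handled analogously, pairing these irregular steps as needed so that $|A_k| = |B_k|$ holds at suitable checkpoints.

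The main obstacle is the symmetric structure of $M_n$: the cofactors on the right hand side above involve entries $x_{i,k}$ for $i \in A_{k-1}$ (from the newly revealed column $k$), and by symmetry $x_{i,k} = x_{k,i}$, so these same Rademacher variables reappear among the multipliers $x_{k,j}$ of the linear combination. I would resolve this by first conditioning on $x_{i,k}$ for $i \in A_{k-1}$ (which freezes the cofactors) and then applying anti-concentration to the remaining independent variables $x_{k,j}$ for $j \in B_{k-1} \setminus A_{k-1}$. For this to yield a useful bound, $B_{k-1} \setminus A_{k-1}$ must be reasonably large and enough of the post-conditioning cofactors must remain heavy---both of which must be woven into the inductive invariant and balanced against the competing demand that $A_k$ and $B_k$ overlap heavily (since the final submatrix has row and column sets with intersection of size at least $n - 4L$). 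Striking this balance is the delicate part of the argument. Iterating over the $n$ steps and union-bounding over the per-step failure probabilities (absorbed by the adaptive choice of $B$) should give the claimed bound $1 - (1/4)n^{-c}$.
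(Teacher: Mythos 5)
The central quantitative gap is exactly the one you flag but do not resolve. Your invariant forces $A_k$ and $B_k$ to differ from $\{1,\dots,k\}$ by only $O(L)=O(\log n)$ indices throughout, so $|B_{k-1}\setminus A_{k-1}|=O(\log n)$ at every step. After you condition on $x_{k,i}$ for $i\in A_{k-1}$, the surviving linear polynomial has only $O(\log n)$ free variables, and Erd\H{o}s--Littlewood--Offord gives a per-step success probability no better than $1-\Omega(1/\sqrt{\log n})$. Over the $n-O(\log n)$ "regular" steps $k\notin X\cup Y$, the probability of never having a bad step is at most $\exp\left(-\Omega\left(n/\sqrt{\log n}\right)\right)$, which is exponentially small. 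The adaptive slack of $\binom{3L}{L}=n^{O(1)}$ configurations cannot absorb this: that freedom is concentrated at the $O(\log n)$ irregular steps in $X\cup Y$, whereas the failures accumulate at the regular steps where the extension is forced. There is no recovery mechanism in a single chain, so the argument collapses.

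The paper avoids this trap by splitting into two phases with two distinct mechanisms, neither present in your sketch. In the "weak" phase (\cref{lem:grow-single-minor-weak}, via \cref{lem:grow-large-minors}), the submatrix $M_{k+R}[\{R+1,\dots,k+R\},B]$ has a row set missing $R=\Theta(n)$ indices, and crucially, when row and column $k+R+1$ are exposed only the new \emph{row} joins the submatrix, never the new column; the expansion is therefore a genuine linear polynomial in all $k+1$ new-row entries $x_i$, $i\in B'$, with cofactors fully determined by $M_{k+R}$ and no conditioning needed. Moreover, the paper tracks an entire \emph{family} of heavy column sets $B$, and a "breed or grow" dichotomy with a supermartingale (\cref{lem:AH2}) shows heaviness is multiplied by roughly $K^{1/2-\delta}$ in most steps, giving $n^{(1/2-O(\delta))n}$ by the end. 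Only in the "end" phase (\cref{lem:grow-single-minor-end}, via \cref{lem:iterative-cover,lem:iterative-growth}) does the paper squeeze the row/column sets to within $O(\log n)$ of $\{1,\dots,n\}$, and there the intermediate submatrices always keep a row/column index gap of size $\geq S$ or $\geq T$ (both $\geq\Omega(\log n)$ and frequently $\Theta(n)$), driving the per-step failure probability down to $2^{-S}$ or $2^{-T}$ so that a union bound over the steps succeeds. Your proposal tries to carry out the final squeeze over all $n$ steps at once, which is precisely what cannot work.
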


By applying \cref{lem:grow-single-minor} with various different choices of $X$ and $Y$, we can obtain many heavy submatrices $M_n[A_1,B_1],\dots, M_n[A_m,B_m]$ such that  the sets $A_{1},\dots,A_{m},B_{1},\dots,B_{m}$ are complement-disjoint. The next lemma states that in such a situation, if we sample $M_{n+1}$ by adding a random row and column to $M_n$, then a large proportion of our submatrices can be transformed into larger submatrices without losing much of their heaviness. We will apply this lemma repeatedly, each step decreasing by 1 the number of rows and columns that our submatrices are missing.

\begin{lem}
\label{lem:endgame-step}
Let $m\in \NN$ be sufficiently large. Let $\lambda>0$, let $1\leq L< n$ be integers, and let $A_{1},\dots,A_{m},B_{1},\dots,B_{m}$
be complement-disjoint subsets of $\{ 1,\dots,n\} $ of
size $n-L$. Let us condition on an outcome of $M_{n}$ such that all the submatrices $M_{n}[A_{\ell},B_{\ell}]$, for $\ell=1,\dots,m$, are $\lambda$-heavy.

Then, with
probability at least $1-m^{-1/24}$, for $m'=\ceil{m/36}$ there are complement-disjoint subsets $A_{1}',\dots,A_{m'}',B_{1}',\dots,B_{m'}'\subseteq\{ 1,\dots,n+1\} $
of size $n-L+2$, such that for all $\ell=1,\dots,m'$ the submatrices $M_{n+1}[A_{\ell}',B_{\ell}']$ are $\lambda/(4n^{4})$-heavy.
\end{lem}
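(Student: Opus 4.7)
The plan is to extend each of the $m$ submatrices by including the new row and column of $M_{n+1}$ (both indexed by $n+1$) together with one additional old row and column. Denote by $A_\ell^c:=\{1,\dots,n\}\setminus A_\ell$ and $B_\ell^c:=\{1,\dots,n\}\setminus B_\ell$ the respective complements, each of size $L$. By the complement-disjoint hypothesis, the $A_\ell^c$'s are pairwise disjoint, as are the $B_\ell^c$'s. For each $\ell$ I pick $I_\ell\in A_\ell^c$ and $J_\ell\in B_\ell^c$ with $I_\ell\ne J_\ell$ (possible provided $L\ge 2$, which holds for the regime in which we will apply the lemma), and set
\[A_\ell':=A_\ell\cup\{n+1,I_\ell\},\quad B_\ell':=B_\ell\cup\{n+1,J_\ell\}.\]
Then $A_\ell',B_\ell'\subseteq\{1,\dots,n+1\}$ have size $n-L+2$, and complement-disjointness is preserved, since $A_\ell'^c=A_\ell^c\setminus\{I_\ell\}$ inherits pairwise disjointness from the $A_\ell^c$'s (and similarly for $B_\ell'$). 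It therefore suffices to show that, with probability at least $1-m^{-1/24}$, at least $m'=\lceil m/36\rceil$ of the permanents $P_\ell:=\per(M_{n+1}[A_\ell',B_\ell'])$ are $\lambda/(4n^4)$-heavy.

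Write $x_k=M_{k,n+1}=M_{n+1,k}$ for $k\le n$ and $y=M_{n+1,n+1}$; these are independent Rademacher random variables, independent of the conditioned matrix $M_n$. Cofactor expansion of $P_\ell$ along the new row and column indexed by $n+1$ expresses $P_\ell$ as a polynomial of total degree at most $2$ in $\{x_1,\dots,x_n,y\}$:
\[P_\ell=y\cdot\per(M_n[A_\ell\cup\{I_\ell\},B_\ell\cup\{J_\ell\}])+\sum_{r\in A_\ell\cup\{I_\ell\}}\sum_{c\in B_\ell\cup\{J_\ell\}}x_r x_c\cdot\per(M_n[A_\ell\cup\{I_\ell\}\setminus\{r\},B_\ell\cup\{J_\ell\}\setminus\{c\}]).\]
The crucial observation is that the coefficient of the monomial $x_{I_\ell}x_{J_\ell}$ is exactly $\per(M_n[A_\ell,B_\ell])$, whose absolute value is at least $\lambda$ by hypothesis. (If both $I_\ell\in B_\ell$ and $J_\ell\in A_\ell$, the symmetry of $M_{n+1}$ produces one extra contribution to this coefficient; this is handled by preferring indices in $A_\ell^c\cap B_\ell^c$ whenever that set is nonempty, and otherwise absorbing the second term by an additional case analysis.)

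The probabilistic engine is an Erd\H os--Littlewood--Offord-type anti-concentration argument. For fixed $\ell$, condition on all of the new random variables except $x_{I_\ell}$ and $x_{J_\ell}$. Then $P_\ell$ becomes a polynomial of the form $\alpha+\beta x_{I_\ell}+\gamma x_{J_\ell}+\delta x_{I_\ell}x_{J_\ell}$ with $\vert\delta\vert\ge\lambda$, and the identity $4\delta=P_\ell(1,1)-P_\ell(1,-1)-P_\ell(-1,1)+P_\ell(-1,-1)$ forces at least one of the four Rademacher outcomes of $(x_{I_\ell},x_{J_\ell})$ to satisfy $\vert P_\ell\vert\ge\vert\delta\vert\ge\lambda$. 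Averaging over the conditioning yields $\Pr[\vert P_\ell\vert\ge\lambda]\ge 1/4$; hence, defining $Z_\ell:=\one[\vert P_\ell\vert\ge\lambda/(4n^4)]$ and $N:=\sum_\ell Z_\ell$, we have $\E N\ge m/4$.

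The main obstacle is upgrading the first-moment bound $\E N\ge m/4$ to the tail estimate $\Pr[N<m/36]\le m^{-1/24}$. Because every polynomial $P_\ell$ depends on essentially all of the new Rademacher variables, the indicators $Z_\ell$ are not mutually independent, and a direct Chernoff bound is unavailable. The plan is a second-moment/Chebyshev argument: for each pair $\ell\ne\ell'$, condition on every new random variable except the four ``crucial'' Rademachers $(x_{I_\ell},x_{J_\ell},x_{I_{\ell'}},x_{J_{\ell'}})$, reducing both $P_\ell$ and $P_{\ell'}$ to degree-$\le 2$ polynomials in these four variables whose respective leading monomials $x_{I_\ell}x_{J_\ell}$ and $x_{I_{\ell'}}x_{J_{\ell'}}$ carry coefficients of magnitude $\ge\lambda$. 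A careful case analysis of the sixteen resulting Rademacher outcomes, exploiting the fact that these four variables are independent, is then expected to give pairwise covariance bounds strong enough to conclude $\Var(N)=o(m^2)$, whereupon Chebyshev's inequality yields the desired polynomial-in-$m$ concentration.
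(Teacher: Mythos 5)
Your construction and the reduction to anti-concentration of quadratic polynomials $P_\ell$ with a ``guaranteed'' cross-coefficient is the same basic strategy as the paper's, but there are two genuine gaps that prevent the argument from closing.

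\textbf{The cancellation in the coefficient of $x_{I_\ell}x_{J_\ell}$.} You propose to avoid the two-term cancellation by ``preferring indices in $A_\ell^c\cap B_\ell^c$ whenever that set is nonempty.'' But the lemma's hypothesis is that $A_1,\dots,A_m,B_1,\dots,B_m$ are jointly complement-disjoint, which in particular means $A_\ell^c\cap B_\ell^c=\emptyset$ for every $\ell$ (so $B_\ell\setminus A_\ell=A_\ell^c$ and $A_\ell\setminus B_\ell=B_\ell^c$). Thus every $I_\ell\in A_\ell^c$ automatically lies in $B_\ell$, every $J_\ell\in B_\ell^c$ automatically lies in $A_\ell$, and the symmetrised coefficient of $x_{I_\ell}x_{J_\ell}$ is always a sum of two permanents, one of which you control ($\per M_n[A_\ell,B_\ell]$) and one of which you do not. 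These really can cancel. The ``additional case analysis'' you defer is precisely the content of \cref{lem:per-no-cancel}, which resolves this by choosing the pair from a set of three candidate indices (and, crucially, allowing $B_\ell$ to be \emph{modified}, not just augmented: the paper takes $B_\ell^*=(B_\ell\setminus\{a_\ell\})\cup\{i_\ell,j_\ell\}$, which is not of the form $B_\ell\cup\{J_\ell\}$). A rigid single choice $(I_\ell,J_\ell)$ cannot in general guarantee a noncancelling coefficient.

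\textbf{The concentration step.} The Chebyshev plan is the larger gap. After conditioning on all non-crucial coordinates, the only control you have on $P_\ell$ is that \emph{one} coefficient has absolute value $\ge\lambda$; you have no control over the other coefficients, and in particular no control over how $P_\ell$ and $P_{\ell'}$ jointly depend on the (shared) ambient variables. This makes the covariance $\operatorname{Cov}(Z_\ell,Z_{\ell'})$ of order a constant in the worst case: for instance, nothing you have established rules out $P_\ell=\lambda\,x_{I_\ell}x_{J_\ell}-\lambda\,x_{I_{\ell'}}x_{J_{\ell'}}$ and $P_{\ell'}=-P_\ell$, giving $Z_\ell\equiv Z_{\ell'}$ and $\Var(N)=\Theta(m^2)$, which kills the Chebyshev bound. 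The lower bound $\Pr(|P_\ell|\ge\lambda)\ge 1/4$ per index is not an independence statement, and the ``careful case analysis of sixteen Rademacher outcomes'' cannot supply one without further structural input. The paper's proof splits the indices according to the structure of the graph of large degree-$2$ coefficients $G^{(\tau)}(P_\ell)$: indices whose graph has a large matching are handled directly by the Meka--Nguyen--Vu inequality (\cref{lem:MNV}) with probability $1-o(1)$; the remaining indices have small vertex covers $S_\ell$, and one conditions away the popular ``bad'' variables and runs separate McDiarmid (\cref{lem:AH}) and Erd\H{o}s--Littlewood--Offord (\cref{lem:LO}) arguments on the ``short'' and ``interesting'' sub-cases. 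That machinery is what actually delivers the $1-m^{-1/24}$ tail; a second-moment bound does not substitute for it.
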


We now show how to deduce \cref{thm:main} from \cref{lem:grow-single-minor,lem:endgame-step}.

\begin{proof}[Proof of \cref{thm:main}]
Choose an absolute constant $0<c<1/50$, such that the statement in \cref{lem:grow-single-minor} is satisfied. Fix $\eps>0$.

Let $L=\lfloor (\log n)/10\rfloor$ and $m=\lfloor (n-L)/(4L)\rfloor$, and consider disjoint sets $X_{1},\dots,X_{m},Y_{1},\dots,Y_{m}\subseteq\{ 1,\dots,n-L\} $ with $\vert X_1\vert=\dots=\vert X_m\vert=L$ and $\vert Y_1\vert=\dots=\vert Y_m\vert=3L$.

For each $\ell=1,\dots, m$, we apply \cref{lem:grow-single-minor} to  the subsets $X_{\ell},Y_{\ell}\subseteq \{ 1,\dots,n-L\}$. Each application fails with probability at most $n^{-c}/4$, so it follows from Markov's inequality (see for example \cref{lem:Markov}) that with probability at least $1-(1/2)\cdot n^{-c}$, at least $m/2$ applications succeed. That is to say, with $m'= \ceil{m/2}$ and $\lambda=(n-L)^{(1-\eps)(n-L)/2}$, we obtain complement-disjoint sets $A_{1},\dots,A_{m'},B_{1},\dots,B_{m'}\subseteq\{ 1,\dots,n-L\} $ of size $n-2L$ such that for all $\ell=1,\dots, m'$, the matrices $M_{n-L}[A_{\ell},B_{\ell}]$  are $\lambda$-heavy. Note that if $n$ is sufficiently large with respect to $\eps$, then $m'\ge n^{9/10}$ and $\lambda\geq n^{n/2-(3/4)\eps n}$.

Now, we wish to iteratively apply \cref{lem:endgame-step}, $L$ times in total. After each application, $m'$ decreases by a factor of $36< e^4$, so after $L=\lfloor (\log n)/10\rfloor$ steps the value of $m'$ will still be at least $\sqrt{n}$. Each of the $L$ applications of \cref{lem:endgame-step} succeeds with probability at least $1-n^{-1/48}$. Thus, with probability at least $1-L\cdot n^{-1/48}\geq 1-(1/2)\cdot n^{-c}$ (for sufficiently large $n$) we can indeed apply the lemma $L$ times. In the end we obtain subsets $A',B'\subseteq \{ 1,\dots,n\}$ of size $n$ such that the matrix $M_n[A',B']$ is $\lambda'$-heavy, where
\[\lambda'=\frac{\lambda}{4(n-L)^{4}\cdot 4(n-L+1)^{4}\dotsm 4(n-1)^{4}}\geq \frac{\lambda}{(4n^{4})^L}\geq \frac{\lambda}{n^{5L}}\geq n^{n/2-(3/4)\eps n-\log n/2}\geq n^{n/2-\eps n}\]
(again assuming that $n$ is sufficiently large with respect to $\eps$). But note that we must have $A'=B'=\{ 1,\dots,n\}$, so this means that $M_n$ itself is $\lambda'$-heavy. In summary, with probability at least $1-n^{-c}$ we have $\vert \per M_n\vert\geq \lambda'\geq n^{n/2-\eps n}$, as desired.
\end{proof}

We remark that the overall structure of our proof is similar to the work of Tao and Vu~\cite{TV09} on permanents of (not necessarily symmetric) random matrices. Indeed, Tao and Vu's proof can also be broken up into two parts analogous to \cref{lem:grow-single-minor,lem:endgame-step}. However, in Tao and Vu's setting, all entries of the random matrix are independent, allowing them to expose the entries row by row. After exposing $k$ rows, they consider $k\times k$ submatrices that consist of all the $k$ exposed rows (and of $k$ of the $n$ columns). When exposing the $k$-th row, the permanent of any such $k\times k$ submatrix can be described as a linear polynomial in some of the entries of the new row, where the coefficients are given by the permanents of certain $(k-1)\times (k-1)$ submatrices in the first $k-1$ rows. In contrast, in our setting with the random symmetric matrix $M_n$,  we are forced to expose the entries of our matrix in a different way: at the $k$-th step we reveal the entries in $M_k$ that are not present in $M_{k-1}$ (that is, we add a new random row and column, with equal entries, to the matrix considered so far\footnote{This type of exposure is standard in the study of symmetric random matrices (see for example \cite{CTV06}).}). Since there is only one $k\times k$ submatrix in $M_k$ (namely $M_k$ itself), in our setting we also need to consider the permanents of (substantially) smaller submatrices of $M_k$.

This more intricate strategy introduces significant challenges. Most notably, the permanents of the submatrices of $M_k$ are described by \emph{quadratic} polynomials in the new matrix entries, where the coefficients depend on the permanents of certain submatrices of $M_{k-1}$ (this is in contrast to Tao and Vu's setting, where the permanents are described by linear polynomials in the entries of the new row). This necessitates the use of some more sophisticated probabilistic tools. Furthermore, there can be certain types of cancellations within these quadratic polynomials, which are not possible for the linear polynomials in the Tao--Vu setting. For example, even if all submatrices of $M_{k-1}$ have non-zero permanent, it can happen that the polynomial describing the permanent of some submatrix of $M_k$ has only very few nonzero coefficients. Handling these types of cancellations requires key new ideas.

\textbf{Organization of the rest of the paper.} \cref{lem:grow-single-minor} will be proved in \cref{sec:grow-single-minor}, and \cref{lem:endgame-step} will be proved in \cref{sec:endgame-step}. As preparation, in Section 4 we collect some probabilistic tools that we will use in the proofs, and in Section 5 we collect some lemmas that can be obtained by studying permanent expansion formulas.

\section{Probabilistic tools}\label{sec:tools}

This section collects some theorems and simple facts that will be needed for proving \cref{lem:grow-single-minor,lem:endgame-step}. We start with some basic anti-concentration estimates for linear forms. The first of these is the famous Erd\H os--Littlewood--Offord inequality (see for example \cite[Corollary~7.8]{TV10}).
\begin{thm}
\label{lem:LO}Let $t\ge 1$ be a real number, and let $f$ be a linear
polynomial in $n$ variables, in which at least $m$ degree-$1$ coefficients have absolute value at least $r$. Then for uniformly random $\boldsymbol{\xi}\in\{ -1,1\} ^{n}$ we have
\[
\Pr\left(|f(\boldsymbol{\xi})|\le t\cdot r\right)\leq (\lceil t\rceil +1)\cdot \binom{m}{\lfloor m/2\rfloor}\cdot 2^{-m}\leq \frac {3t}{\sqrt{m}}.
\]
\end{thm}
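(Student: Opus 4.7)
The statement is the classical Erd\H{o}s--Littlewood--Offord inequality, and I would prove it in three steps: a reduction, a Sperner-type antichain argument, and a covering/constant computation.

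\textbf{Reduction.} Write $f(\boldsymbol{\xi})=a_0+\sum_{i=1}^n a_i\xi_i$, and without loss of generality let $|a_1|,\dots,|a_m|\ge r$. Condition on the values of $\xi_{m+1},\dots,\xi_n$, absorbing their contributions into the constant term. It then suffices to show that for $g(\boldsymbol{\eta})=c+\sum_{i=1}^m a_i\eta_i$ with $|a_i|\ge r$, uniform $\boldsymbol{\eta}\in\{-1,1\}^m$ satisfies $\Pr(|g(\boldsymbol{\eta})|\le tr)\le(\lceil t\rceil+1)\binom{m}{\lfloor m/2\rfloor}2^{-m}$. Replacing $\eta_i$ by $-\eta_i$ whenever $a_i<0$ (which preserves the distribution of $\boldsymbol{\eta}$), I may further assume $a_i\ge r>0$ for all $i=1,\dots,m$.

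\textbf{Antichain step.} For any real $\alpha$, let $\mathcal{A}_\alpha=\{\boldsymbol{\eta}\in\{-1,1\}^m:g(\boldsymbol{\eta})\in[\alpha,\alpha+2r)\}$, and to each $\boldsymbol{\eta}$ associate the set $T(\boldsymbol{\eta})=\{i:\eta_i=1\}\subseteq\{1,\dots,m\}$. If $T(\boldsymbol{\eta})\subsetneq T(\boldsymbol{\eta}')$ with $\boldsymbol{\eta},\boldsymbol{\eta}'\in\mathcal{A}_\alpha$, then
\[
g(\boldsymbol{\eta}')-g(\boldsymbol{\eta})=2\sum_{i\in T(\boldsymbol{\eta}')\setminus T(\boldsymbol{\eta})}a_i\ge 2r,
\]
contradicting that both values lie in an interval of length strictly less than $2r$. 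Hence $\{T(\boldsymbol{\eta}):\boldsymbol{\eta}\in\mathcal{A}_\alpha\}$ forms an antichain in the Boolean lattice on $\{1,\dots,m\}$, and Sperner's theorem gives $|\mathcal{A}_\alpha|\le\binom{m}{\lfloor m/2\rfloor}$.

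\textbf{Covering and constants.} The closed interval $[-tr,tr]$ can be covered by the $\lceil t\rceil+1$ half-open intervals $[(-\lceil t\rceil-1+2k)r,(-\lceil t\rceil+1+2k)r)$ for $k=0,1,\dots,\lceil t\rceil$. Applying the antichain bound to each and dividing by $2^m$ yields the first inequality. For the second, note that $t\ge 1$ gives $\lceil t\rceil+1\le t+2\le 3t$, and the standard central binomial estimate $\binom{m}{\lfloor m/2\rfloor}\le 2^m/\sqrt{m}$ (via Stirling) yields $(\lceil t\rceil+1)\binom{m}{\lfloor m/2\rfloor}2^{-m}\le 3t/\sqrt{m}$.

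There is no real obstacle here: the whole inequality is elementary once one thinks to encode ``$g(\boldsymbol{\eta})$ lies in a short interval'' as an antichain condition. The conceptual content is entirely in Sperner's theorem; everything else is bookkeeping (the conditioning, the sign flips, choosing half-open intervals to avoid boundary comparabilities, and covering $[-tr,tr]$).
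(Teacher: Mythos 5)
The paper does not prove this statement; it cites it as a known result (Corollary~7.8 of the Tao--Vu textbook), so there is no in-paper proof to compare against. Your argument is the classical Erd\H{o}s proof via Sperner's theorem, and it is correct in all steps: the reduction by conditioning and sign flips, the observation that a strict set inclusion $T(\boldsymbol{\eta})\subsetneq T(\boldsymbol{\eta}')$ forces $g(\boldsymbol{\eta}')-g(\boldsymbol{\eta})\ge 2r$ so that each level set $\mathcal{A}_\alpha$ indexes an antichain, the covering of $[-tr,tr]$ by $\lceil t\rceil+1$ half-open intervals of length $2r$, and the numerical estimates $\lceil t\rceil+1\le t+2\le 3t$ (using $t\ge 1$) and $\binom{m}{\lfloor m/2\rfloor}\le 2^m/\sqrt{m}$ (which does hold for all $m\ge 1$, though one should spell out the check at small $m$ rather than gesturing at Stirling). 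This is exactly the standard route to the stated constants, so nothing is missing.
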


We will also need the following very easy fact.
\begin{fact}
\label{fact:non-degenerate-linear}Let $f$ be a linear polynomial in
$n$ variables, which has at least one coefficient
with absolute value at least $r$. Then for uniformly random $\boldsymbol{\xi}\in\{ -1,1\} ^{n}$
we have
\[
\Pr\left(|f(\boldsymbol{\xi})|<r\right)\le\frac12.
\]
\end{fact}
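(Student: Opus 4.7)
The plan is to reduce the problem to a two-point anti-concentration estimate by conditioning on all but one variable. Without loss of generality I would assume that the coefficient of $x_1$ in $f$ has absolute value at least $r$, and denote this coefficient by $c$ (so $|c|\ge r$). Then I would write
\[f(\boldsymbol{\xi}) = a(\xi_2,\dots,\xi_n) + c\,\xi_1,\]
where $a$ is an affine function of the remaining variables.

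Next I would condition on an arbitrary outcome of $\xi_2,\dots,\xi_n$, so that $a$ becomes a fixed real number. Under this conditioning, $f(\boldsymbol{\xi})$ takes exactly two equiprobable values, namely $a+c$ and $a-c$, which differ by $2|c|\ge 2r$. By the triangle inequality these two values cannot both lie in the open interval $(-r,r)$, so at most one of them has absolute value strictly less than $r$. Consequently
\[\Pr\bigl(|f(\boldsymbol{\xi})|<r \,\big|\, \xi_2,\dots,\xi_n\bigr)\le \tfrac12,\]
and averaging over the conditioning gives the claim.

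This really is a one-line observation, so I do not anticipate any genuine obstacle; the only thing to be slightly careful about is that the bound $\tfrac12$ (rather than $0$) is necessary because the constant term of $f$ could be chosen so that one of the two values $a\pm c$ lies inside $(-r,r)$ while the other does not.
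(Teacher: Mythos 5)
Your argument is correct whenever some degree-$1$ coefficient of $f$ has absolute value at least $r$, and in that regime it coincides with the paper's approach (condition on the other $n-1$ variables and use the two-point observation). However, the hypothesis also allows the large coefficient to be the \emph{constant} term of $f$, and your reduction (``WLOG the coefficient of $x_1$ has absolute value at least $r$'') silently excludes that case: if only the constant term is large, then after conditioning, the two values $a\pm c$ differ by $2|c|$, which may be smaller than $2r$, and the argument breaks down. This is not merely a definitional quibble about the word ``coefficient'': the fact is applied in \cref{claim-T-ell-large} inside the proof of \cref{lem:endgame-step} to a linear polynomial (the coefficient of $x_i$ in $Q_\ell$) whose large coefficient may well be its constant one.

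The paper covers the constant-coefficient case with a separate symmetry argument: writing $f(\boldsymbol{\xi})=a_1\xi_1+\dots+a_n\xi_n+c$ with, say, $c\ge r$, the event $f(\boldsymbol{\xi})<r$ forces $a_1\xi_1+\dots+a_n\xi_n<0$, and this has probability at most $\tfrac12$ because the sum $a_1\xi_1+\dots+a_n\xi_n$ is a symmetric random variable; hence $\Pr\left(|f(\boldsymbol{\xi})|<r\right)\le\Pr\left(f(\boldsymbol{\xi})<r\right)\le\tfrac12$. Adding this case to your argument completes the proof.
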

\begin{proof}
First, suppose that the constant coefficient of $f$ has absolute value at least $r$, and suppose without loss of generality that $f(\boldsymbol{\xi})=a_1\xi_1+\dots+a_n\xi_n+c$ with $c\geq r$. Then, by symmetry we have that $\Pr(a_1\xi_1+\dots+a_n\xi_n< 0)\leq 1/2$ and therefore $\Pr(f(\boldsymbol{\xi})<r)\leq 1/2$.

Otherwise, for some $i\in \{1,\dots,n\}$, the coefficient of $\xi_i$ in $f(\boldsymbol{\xi})$ has absolute value at least $r$, and suppose without loss of generality that $i=1$. Condition on any outcomes of the variables $\xi_2,\dots,\xi_{n}$, and observe that then we can have $|f(\boldsymbol{\xi})|<r$ for at most one of the two possible outcomes of $\xi_1$.
\end{proof}

We will also need counterparts of both the above statements for quadratic polynomials. The quadratic counterpart of \cref{fact:non-degenerate-linear} is again easy to prove.
\begin{fact}

\label{fact:non-degenerate}Let $f$ be a quadratic polynomial in
$n$ variables, which has at least one multilinear degree-2 coefficient
with absolute value at least $r$. Then for uniformly random $\boldsymbol{\xi}\in\{ -1,1\} ^{n}$
we have
\[
\Pr(|f(\boldsymbol{\xi})|<r)\le\frac34.
\]
\end{fact}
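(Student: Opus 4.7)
The plan is to reduce the problem to a quadratic polynomial in just two Rademacher variables by conditioning on all of the others. By hypothesis, there exist distinct indices $i \neq j$ such that the coefficient of $\xi_i \xi_j$ in $f$ has absolute value at least $r$; without loss of generality take $i=1$, $j=2$. Condition on an arbitrary outcome of $\xi_3, \dots, \xi_n$. Then $f$ restricted to $\xi_1, \xi_2$ takes the form
\[
g(\xi_1, \xi_2) = a \xi_1 \xi_2 + b \xi_1 + c \xi_2 + d,
\]
where $|a| \ge r$ and $b, c, d \in \RR$ depend on the fixed values of $\xi_3, \dots, \xi_n$.

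The key observation is the identity $g(1,1) + g(-1,-1) - g(1,-1) - g(-1,1) = 4a$, obtained by computing each value explicitly:
\[
g(1,1) = a + b + c + d, \quad g(-1,-1) = a - b - c + d,
\]
\[
g(1,-1) = -a + b - c + d, \quad g(-1,1) = -a - b + c + d.
\]
By the triangle inequality, $|g(1,1)| + |g(-1,-1)| + |g(1,-1)| + |g(-1,1)| \ge 4|a| \ge 4r$, so at least one of the four values of $g$ has absolute value at least $r$. Since $(\xi_1, \xi_2)$ takes each of the four $\pm 1$ sign patterns with probability $1/4$, the conditional probability that $|f(\boldsymbol{\xi})| < r$ given $\xi_3, \dots, \xi_n$ is at most $3/4$. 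Averaging over the outcomes of $\xi_3, \dots, \xi_n$ yields the claim. There is no real obstacle here — the only mildly nontrivial step is spotting the four-point identity, which is essentially a discrete second difference that isolates the multilinear $\xi_1 \xi_2$ coefficient.
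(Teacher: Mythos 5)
Your proof is correct and takes a genuinely different route from the paper's. The paper first multilinearises $f$ (replacing $\xi_i^2$ by $1$), then factors $f = \xi_1\big(a_{12}\xi_2 + g(\xi_3,\dots,\xi_n)\big) + h(\xi_2,\dots,\xi_n)$ and applies \cref{fact:non-degenerate-linear} twice: with probability at least $1/2$ over $\xi_2$ the inner linear form $a_{12}\xi_2 + g$ has absolute value at least $r$, and conditional on such an outcome, with probability at least $1/2$ over $\xi_1$ the full polynomial does. You instead isolate the multilinear coefficient in one shot via the discrete second difference $g(1,1)+g(-1,-1)-g(1,-1)-g(-1,1)=4a$, and the triangle inequality then forces one of the four outcomes of $(\xi_1,\xi_2)$ to give $|f|\ge r$. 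Both arguments establish the same intermediate claim --- that after conditioning on $\xi_3,\dots,\xi_n$, at least one of the four sign patterns of $(\xi_1,\xi_2)$ makes $|f|$ large --- so the final $3/4$ bound is identical. Your version is a bit more direct and makes explicit that the multilinear coefficient is the relevant ``Fourier coefficient''; the paper's is more modular in that it reuses the linear non-degeneracy lemma. One small bookkeeping point: when you write the restricted form $g(\xi_1,\xi_2)=a\xi_1\xi_2+b\xi_1+c\xi_2+d$, you should say (as the paper does) that any $\xi_1^2$ or $\xi_2^2$ terms of $f$ are absorbed into the constant $d$ because $\xi_i^2=1$ on $\{-1,1\}$; this is implicit in your argument but worth making explicit.
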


\begin{proof}
We may assume that $f$ is multilinear (every term of the form $\xi_i^2$ can be replaced by the constant $1$ without changing the behaviour of $f(\boldsymbol \xi)$).
Suppose without loss of generality that the coefficient $a_{12}$ of $\xi_{1}\xi_{2}$ satisfies $\vert a_{12}\vert \geq r$, and write $f(\xi_{1},\dots,\xi_{n})=\xi_{1}\cdot (a_{12} \xi_{2}+g(\xi_{3},\dots,\xi_{n}))+h(\xi_{2},\dots,\xi_{n})$.
Conditioning on any outcomes of $\xi_{3},\dots,\xi_{n}$, with probability
at least $1/2$ we have $|a_{12}\xi_{2}+g(\xi_{3},\dots,\xi_{n})|\ge r$.
Then, conditioning on such an outcome of $\xi_{2}$, we have $\vert f(\boldsymbol{\xi})\vert \ge r$
with probability at least $1/2$.
\end{proof}

It is more delicate to generalise the Erd\H os--Littlewood--Offord inequality to quadratic polynomials. For a multilinear quadratic polynomial $f$ in the variables $x_1,\dots,x_n$ and for a real number $r> 0$, let $G^{(r)}(f)$
be the graph with vertex set $\{1,\dots,n\}$ having an edge $ij$ whenever the coefficient of $x_ix_j$ in $f$ has absolute value at least $r$. Let $\nu(G)$ be the matching number\footnote{The matching number of a graph $G$ is the largest number $\nu$ such that one can find $\nu$ disjoint edges in $G$.} of a graph $G$. The following is a special case of a theorem proved by Meka, Nguyen and Vu~\cite[Theorem 1.6]{MNV16}.

\begin{thm}
\label{lem:MNV}Let $r>0$, let $f$ be a multilinear quadratic polynomial in $n$ variables, and let $\nu=\nu(G^{(r)}(f))\geq 3$. Then for uniformly random $\boldsymbol{\xi}\in\{ -1,1\} ^{n}$
we have
\[
\Pr(|f(\boldsymbol{\xi})|\le r)\le \frac{(\log \nu)^C}{\nu^{1/2}},
\]
where $C$ is an absolute constant.
\end{thm}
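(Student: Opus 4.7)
The plan is to prove \cref{lem:MNV} by a decoupling argument that reduces quadratic anti-concentration to linear anti-concentration, and then invokes the Erd\H os--Littlewood--Offord inequality (\cref{lem:LO}).

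First, fix a matching $\{\{i_1,j_1\},\dots,\{i_\nu,j_\nu\}\}$ of size $\nu$ in $G^{(r)}(f)$, so the coefficient $a_{i_tj_t}$ of $x_{i_t}x_{j_t}$ in $f$ satisfies $|a_{i_tj_t}|\geq r$ for every $t$. Partition the coordinates into $I=\{i_1,\dots,i_\nu\}$, $J=\{j_1,\dots,j_\nu\}$, and a remainder $R$, and write $\boldsymbol{\xi}=(\boldsymbol{\xi}_I,\boldsymbol{\xi}_J,\boldsymbol{\xi}_R)$. Apply a Cauchy--Schwarz decoupling step with an independent copy $\boldsymbol{\xi}_J'$ of $\boldsymbol{\xi}_J$ to obtain
\[
\Pr\bigl(|f(\boldsymbol{\xi})|\leq r\bigr)^2 \leq \Pr\bigl(|g(\boldsymbol{\xi}_I,\boldsymbol{\xi}_J,\boldsymbol{\xi}_J',\boldsymbol{\xi}_R)|\leq 2r\bigr),
\]
where $g=f(\boldsymbol{\xi}_I,\boldsymbol{\xi}_J,\boldsymbol{\xi}_R)-f(\boldsymbol{\xi}_I,\boldsymbol{\xi}_J',\boldsymbol{\xi}_R)$. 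The point of writing things this way is that the matching is vertex-disjoint in $J$, so $g$ is \emph{linear} in $\boldsymbol{\xi}_I$, with the coefficient of $\xi_{i_t}$ equal to $a_{i_tj_t}(\xi_{j_t}-\xi_{j_t}')$ plus a term that does not involve $\boldsymbol{\xi}_I$.

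Next, condition on $(\boldsymbol{\xi}_J,\boldsymbol{\xi}_J',\boldsymbol{\xi}_R)$. The differences $\xi_{j_t}-\xi_{j_t}'$ lie in $\{-2,0,2\}$ and are independent, each nonzero with probability $1/2$, so by a Chernoff bound at least $\nu/3$ of them are nonzero except on an event of probability $e^{-\Omega(\nu)}$. On the complementary event, the conditional linear form in $\boldsymbol{\xi}_I$ has at least $\nu/3$ coefficients of magnitude at least $2r$, and \cref{lem:LO} gives a conditional anti-concentration bound of $O(\nu^{-1/2})$. Combining these estimates yields the basic decoupling bound $\Pr(|f(\boldsymbol{\xi})|\leq r)\lesssim \nu^{-1/4}$.

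The hard part is that this is a factor of $\nu^{1/4}$ weaker than the target: naive decoupling squares the probability one wants to bound, costing a square root. To recover the full $\nu^{-1/2+o(1)}$ rate I would replace the single decoupling step with a more refined Fourier-analytic analysis of the characteristic function $\E[e^{2\pi i t f(\boldsymbol{\xi})}]$, carrying out successive ``swap'' moves along matching edges and recovering anti-concentration for $f(\boldsymbol{\xi})$ by integrating against a smooth kernel on a window of width $r$. The matching structure in $G^{(r)}(f)$ guarantees that enough edges contribute a genuine oscillation to the characteristic function, and a counting argument on the resulting bilinear additive energy yields the optimal square-root saving. The polylogarithmic factor $(\log\nu)^C$ is the natural price of these counting arguments, which replace the crude single Cauchy--Schwarz step of the basic approach.
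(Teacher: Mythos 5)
The paper does not prove this statement at all: it is quoted verbatim as a special case of Meka, Nguyen and Vu \cite[Theorem~1.6]{MNV16}, so there is no in-paper proof to compare against. What you have written is therefore a (partial) reconstruction of that external result, and it has a genuine gap.

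Your Cauchy--Schwarz decoupling step is standard and correct: squaring and introducing an independent copy $\boldsymbol{\xi}_J'$ reduces the quadratic anti-concentration for $f$ to a linear anti-concentration statement for $g = f(\boldsymbol{\xi}_I,\boldsymbol{\xi}_J,\boldsymbol{\xi}_R) - f(\boldsymbol{\xi}_I,\boldsymbol{\xi}_J',\boldsymbol{\xi}_R)$, and combining with \cref{lem:LO} along the matching gives $\Pr(|f(\boldsymbol{\xi})|\le r)\lesssim \nu^{-1/4}$. (A smaller issue here: the coefficient of $\xi_{i_t}$ in $g$ is $\sum_{j\in J} a_{i_t j}(\xi_j-\xi_j')$, not just $a_{i_t j_t}(\xi_{j_t}-\xi_{j_t}')$, so the nonvanishing of $\xi_{j_t}-\xi_{j_t}'$ does not by itself force the coefficient to have magnitude $\ge 2r$; you would need to invoke something like \cref{fact:non-degenerate-linear} on the coefficient as a linear form in $(\boldsymbol{\xi}_J,\boldsymbol{\xi}_J')$, which changes the $e^{-\Omega(\nu)}$ failure probability to something weaker and needs to be argued more carefully, though this is fixable.)

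The real problem is the final paragraph. Getting from the easy $\nu^{-1/4}$ bound up to the claimed $\nu^{-1/2}(\log\nu)^{C}$ is precisely the hard content of the Meka--Nguyen--Vu theorem, and what you offer there --- ``a more refined Fourier-analytic analysis of the characteristic function,'' ``successive swap moves along matching edges,'' ``a counting argument on the resulting bilinear additive energy'' --- is a description of an intended plan, not a proof. No concrete lemma is stated, no estimate is carried out, and it is not at all clear how the sketched argument would avoid the square-root loss that Cauchy--Schwarz inherently incurs. In fact, the actual proof in \cite{MNV16} is a delicate multi-step argument (regularisation of the quadratic form, a careful dyadic decomposition, and a combination of decoupling with Hal\'asz-type Fourier estimates and combinatorial counting), which is why it occupies a substantial part of that paper. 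As written, your proposal establishes only the exponent $1/4$, and the leap to $1/2$ is asserted rather than proved.
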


The following concentration inequality is a special case of the Azuma--Hoeffding martingale concentration inequality, and is sometimes known as McDiarmid's inequality (see for example \cite[Lemma~1.34]{TV10}).

\begin{lem}
\label{lem:AH}Let $c>0$ and let $X$ be a random variable defined
in terms of independent random variables $\xi_{1},\dots,\xi_{n}$,
having the property that varying any individual $\xi_{i}$ affects
the value of $X$ by at most $c$. Then for any $t\geq 0$ we have
\[
\Pr\left(|X-\E X|\ge t\right)\le 2e^{-t^{2}/(2nc^{2})}.
\]
\end{lem}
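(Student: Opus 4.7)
The plan is to use the Doob-martingale construction to reduce to the classical Azuma--Hoeffding inequality. Set $X_{k}=\E[X\mid \xi_{1},\dots,\xi_{k}]$ for $k=0,1,\dots,n$; then $X_{0}=\E X$, $X_{n}=X$, and $(X_{k})$ is a martingale with respect to the filtration generated by the $\xi_{i}$.

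The first substantive step is to verify that the martingale differences satisfy $|X_{k}-X_{k-1}|\le c$ almost surely. Writing $X=F(\xi_{1},\dots,\xi_{n})$ and using independence, we have $X_{k}=g_{k}(\xi_{1},\dots,\xi_{k})$ with
\[g_{k}(y_{1},\dots,y_{k})=\E[F(y_{1},\dots,y_{k},\xi_{k+1},\dots,\xi_{n})],\]
and $g_{k-1}(y_{1},\dots,y_{k-1})=\E[g_{k}(y_{1},\dots,y_{k-1},\xi_{k})]$. The bounded-differences hypothesis, applied pointwise in $(\xi_{k+1},\dots,\xi_{n})$, gives
\[|g_{k}(y_{1},\dots,y_{k-1},y)-g_{k}(y_{1},\dots,y_{k-1},y')|\le c\]
for any $y,y'$, and averaging over $\xi_{k}$ yields the desired bound on $|X_k-X_{k-1}|$.

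The final step is a standard Chernoff argument. For a mean-zero random variable $D$ with $|D|\le c$, Hoeffding's lemma gives $\E[e^{\lambda D}]\le e^{\lambda^{2}c^{2}/2}$; applied conditionally to $D=X_{k}-X_{k-1}$ given $\xi_1,\dots,\xi_{k-1}$, and then iterated from $k=n$ down to $k=1$, this yields $\E[e^{\lambda(X-\E X)}]\le e^{n\lambda^{2}c^{2}/2}$. Markov's inequality gives $\Pr(X-\E X\ge t)\le e^{-\lambda t+n\lambda^{2}c^{2}/2}$, and optimizing at $\lambda=t/(nc^{2})$ produces $e^{-t^{2}/(2nc^{2})}$. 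The same argument applied to $-X$ handles the other tail, and a union bound supplies the leading factor of $2$. The only non-mechanical ingredient is Hoeffding's lemma itself, which one proves by convexity (reducing to the two-point distribution on $\{-c,c\}$ with matching mean) and Taylor expansion of the cumulant generating function $\psi(\lambda)=\log\E[e^{\lambda D}]$, using $\psi''\le c^2$; everything else is routine, and in particular no further structural assumption on the $\xi_i$ beyond independence is required.
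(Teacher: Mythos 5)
Your proof is correct and is the standard argument (Doob martingale, bounded differences via independence, conditional Hoeffding's lemma, Chernoff optimization). The paper does not supply its own proof of this lemma — it is cited from the Tao--Vu book — and your proof matches the textbook argument that reference gives, so there is nothing substantive to compare beyond noting that you have reproduced it correctly, including the needed use of independence to write $X_k$ as a deterministic function of $\xi_1,\dots,\xi_k$ and the averaging step that transfers the pointwise oscillation bound to the martingale differences.
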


The next inequality is a one-sided version of the Azuma--Hoeffding inequality for supermartingales (see \cite[Lemma~2.3]{TV09}).

\begin{lem}
\label{lem:AH2}Let $c>0$. In a probability space, let $Z_1,\dots,Z_n$ be a sequence of random objects, and let $W_1,\dots,W_n$ be a sequence of random variables, such that for each $k$, all of $Z_1,\dots,Z_{k},W_1,\dots,W_k$ are fully determined by $Z_k$, and such that $|W_{k+1}-W_{k}|\le c$ for all $k=1,\dots,n-1$. Suppose that the supermartingale property $\E[W_{k+1}|Z_k]\le W_k$ is satisfied for $k=1,\dots,n-1$. Then for any $t>0$ we have
\[
\Pr\left(W_n-W_1\ge t\right)\le e^{-t^{2}/(2nc^{2})}.
\]
\end{lem}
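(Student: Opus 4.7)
The plan is to follow the standard exponential moment (Chernoff) method used in the classical proof of the Azuma--Hoeffding inequality. First I would set $D_k = W_{k+1} - W_k$ for $k=1,\dots,n-1$, so that the hypotheses translate to $|D_k|\le c$ and $\E[D_k\mid Z_k]\le 0$, and the quantity to control is the telescoping sum $W_n - W_1 = \sum_{k=1}^{n-1} D_k$. For any $\lambda > 0$, Markov's inequality applied to $e^{\lambda(W_n-W_1)}$ reduces the problem to bounding a moment generating function:
\[
\Pr(W_n-W_1\ge t)\le e^{-\lambda t}\,\E\bigl[e^{\lambda(W_n-W_1)}\bigr].
\]

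The heart of the argument is a conditional MGF estimate $\E[e^{\lambda D_k}\mid Z_k]\le e^{\lambda^2 c^2/2}$, which is a one-sided version of Hoeffding's lemma. To prove it, let $\mu=\E[D_k\mid Z_k]\le 0$; then $D_k-\mu$ is (conditionally) mean zero and supported in an interval of length at most $2c$, so the usual convexity argument (bounding $e^{\lambda x}$ by its secant line on that interval, then taking expectations) yields $\E[e^{\lambda(D_k-\mu)}\mid Z_k]\le e^{\lambda^2 c^2/2}$. Multiplying back by $e^{\lambda\mu}\le 1$ (here we use $\lambda>0$ and $\mu\le 0$) gives the claim. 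This is precisely the step where the one-sided supermartingale hypothesis, as opposed to a two-sided martingale assumption, suffices.

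Next I would iterate from the inside out. Since $D_1,\dots,D_{k-1}$ are measurable with respect to $Z_k$ (by the given measurability hypothesis), I can pull the partial sum outside the conditional expectation and write
\[
\E\bigl[e^{\lambda(D_1+\dots+D_k)}\bigr]=\E\Bigl[e^{\lambda(D_1+\dots+D_{k-1})}\cdot \E[e^{\lambda D_k}\mid Z_k]\Bigr]\le e^{\lambda^2 c^2/2}\,\E\bigl[e^{\lambda(D_1+\dots+D_{k-1})}\bigr].
\]
By induction this gives $\E[e^{\lambda(W_n-W_1)}]\le e^{(n-1)\lambda^2 c^2/2}\le e^{n\lambda^2 c^2/2}$. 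Finally, optimising in $\lambda$ by taking $\lambda=t/(nc^2)$ produces the exponent $-t^2/(2nc^2)$, as desired.

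There is no real obstacle since this is a textbook argument; the only mildly delicate step is the one-sided Hoeffding estimate, where one must keep careful track of the sign of $\lambda\mu$ to see that weakening the martingale condition to a supermartingale condition costs nothing on the upper-tail side. The measurability hypothesis is used exactly once, to justify the factorisation in the inductive step.
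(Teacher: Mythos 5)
Your proof is correct and is the standard exponential-moment argument for the one-sided Azuma--Hoeffding inequality; the paper itself does not supply a proof of this lemma but simply cites it from Tao and Vu, and your argument is the usual one underlying that reference. The one-sided Hoeffding step, the use of measurability to justify the factorisation in the tower-property induction, and the final optimisation $\lambda = t/(nc^2)$ are all handled correctly.
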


Recall that for $0<p<1$, a Bernoulli random variable $\chi\sim \Ber(p)$ is a random variable taking values $0$ and $1$ with $\Pr(\chi=1)=p$ and $\Pr(\chi=0)=1-p$. The following lemma is a version of the Chernoff concentration bound for sums of Bernoulli random variables (see for example \cite[Theorem~A.1.4]{AS}).

\begin{lem}
\label{lem:Chernoff}Let $\chi_1,\dots,\chi_m$ be independent Bernoulli random variables, where for each $i=1,\dots, m$ we have $\chi_i\sim \Ber(p_i)$ for some $0<p_i<1$. Then for any $t>0$ the sum $X=\chi_1+\dots+\chi_m$ satisfies
\[
\Pr\left(X-\E X> t\right)< e^{-2t^{2}/n}\quad \text{and}\quad \Pr\left(X-\E X< -t\right)< e^{-2t^{2}/n}.
\]
\end{lem}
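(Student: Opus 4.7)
The plan is to prove this using the classical Chernoff--Hoeffding exponential moment method, handling the two tails symmetrically. For any $\lambda>0$, Markov's inequality applied to the nonnegative random variable $e^{\lambda(X-\E X)}$ gives
\[
\Pr(X-\E X > t)\;\le\; e^{-\lambda t}\cdot\E\!\left[e^{\lambda(X-\E X)}\right].
\]
Since the $\chi_i$ are independent, the moment generating function factors:
\[
\E\!\left[e^{\lambda(X-\E X)}\right]=\prod_{i=1}^{m}\E\!\left[e^{\lambda(\chi_i-p_i)}\right],
\]
so the argument reduces to controlling each factor uniformly.

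The key input is Hoeffding's lemma. Each centered variable $\chi_i-p_i$ has mean zero and is supported in the interval $[-p_i,\,1-p_i]$, which has length exactly $1$. Combining convexity of $x\mapsto e^{\lambda x}$ with the observation that among mean-zero distributions on a bounded interval the moment generating function is maximised by the two-point distribution at the endpoints, one obtains $\E[e^{\lambda(\chi_i-p_i)}]\le e^{\lambda^{2}/8}$. Multiplying these bounds over $i=1,\dots,m$ yields
\[
\Pr(X-\E X>t)\;\le\;\exp\!\left(-\lambda t+\tfrac{m\lambda^{2}}{8}\right),
\]
and optimising by choosing $\lambda=4t/m$ gives the claimed upper-tail estimate $e^{-2t^{2}/m}$.

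For the lower tail, I would apply exactly the same argument to the complementary Bernoulli variables $\chi_i':=1-\chi_i\sim\Ber(1-p_i)$. Setting $X':=\sum_{i=1}^{m}\chi_i'=m-X$, we have $\E X'=m-\E X$, and the event $\{X-\E X<-t\}$ coincides with $\{X'-\E X'>t\}$, so the upper-tail bound already proved for $X'$ transfers directly. Since this lemma is the textbook Chernoff--Hoeffding bound (cited from Alon--Spencer), no real obstacle is anticipated; the only step requiring any care is the invocation of Hoeffding's lemma with the correct constant $\lambda^{2}/8$ for a mean-zero variable supported in a unit-length interval.
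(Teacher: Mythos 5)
Your proof is correct, and it is the standard Chernoff--Hoeffding argument: Markov's inequality on the exponential moment, the factorisation by independence, Hoeffding's lemma $\E[e^{\lambda(\chi_i-p_i)}]\le e^{\lambda^2/8}$ for a mean-zero variable supported in an interval of length $1$, optimisation at $\lambda=4t/m$, and the reduction of the lower tail to the upper tail via $\chi_i'=1-\chi_i$. The paper does not prove this lemma at all --- it simply cites it as a known result from Alon and Spencer (Theorem~A.1.4 there) --- so there is no proof in the paper to compare against; your argument is essentially the one that appears in such references. Two small remarks: the denominator in the stated bound should read $m$ (the number of summands) rather than $n$, which is evidently a typo in the paper that your derivation implicitly corrects; and the lemma asserts strict inequality, which your chain of $\le$'s does not quite deliver as written --- one can obtain it by noting that Markov's inequality is strict here since $e^{\lambda(X-\E X)}$ is strictly positive and, because $0<p_i<1$, the event $\{X-\E X<t\}$ has positive probability, but this is a cosmetic point.
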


Sometimes we will encounter random variables that \emph{stochastically dominate} a sum of Bernoulli random variables (we say that a random variable $X$ stochastically dominates another random variable $Y$ if there is a coupling of $X$ and $Y$ such that we always have $X\ge Y$). For example, consider a random process with $n$ steps where each step satisfies a certain property with probability at least $1/2$, even when conditioning on any outcome of the previous steps. Then the number $X$ of steps with this property stochastically dominates a sum of $n$ independent $\Ber(1/2)$ random variables. Denoting this sum by $Y$, we therefore have $\Pr\left(X-(n/2)< -t\right)\leq \Pr\left(Y-(n/2)< -t\right)< e^{-2t^{2}/n}$ for any $t>0$ by \cref{lem:Chernoff}. Hence we can use \cref{lem:Chernoff} to show that a random variable is very likely reasonably large if it stochastically dominates a sum of Bernoulli random variables.

Finally, the following lemma is an easy consequence of Markov's inequality (see, for example, \cite[Lemma~2.1]{TV09}).

\begin{lem}
\label{lem:Markov}Let $1>p>q>0$, and let $E_1,\dots,E_m$ be events (not necessarily independent), each of which occurs with probability at least $p$. Then the probability that at least $qm$ of the events $E_1,\dots,E_m$ occur simultaneously is at least $(p-q)/(1-q)$.
\end{lem}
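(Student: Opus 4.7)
The plan is to apply Markov's inequality to the count of \emph{failed} events rather than the count of successful ones, since the lack of independence rules out any Chernoff-type argument. Let $X=\sum_{i=1}^{m}\one[\overline{E_{i}}]$ be the number of events $E_{i}$ that do not occur. By linearity of expectation and the hypothesis $\Pr(E_{i})\geq p$, we have $\E[X]=\sum_{i=1}^{m}\Pr(\overline{E_{i}})\leq m(1-p)$, and crucially this bound uses no independence assumption at all.

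Next, I would observe the elementary equivalence that ``at least $qm$ of the $E_{i}$ occur'' is the same event as $\{X\leq (1-q)m\}$, since the number of successes plus the number of failures equals $m$. Applying Markov's inequality to the nonnegative variable $X$ yields
\[
\Pr\left(X>(1-q)m\right)\leq \frac{\E[X]}{(1-q)m}\leq \frac{m(1-p)}{(1-q)m}=\frac{1-p}{1-q}.
\]
Taking complements gives
\[
\Pr\left(\text{at least }qm\text{ events occur}\right)\geq 1-\frac{1-p}{1-q}=\frac{(1-q)-(1-p)}{1-q}=\frac{p-q}{1-q},
\]
as required. The hypothesis $1>p>q>0$ ensures $(1-q)m>0$ so Markov is applicable and the resulting lower bound is nontrivial.

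There is not really a substantive obstacle here; the only genuine ``idea'' in the proof is the choice to count failures rather than successes, together with the algebraic identity $1-(1-p)/(1-q)=(p-q)/(1-q)$. A naive attempt to bound $\Pr(\sum_{i}\one[E_{i}]\geq qm)$ from below by applying Markov directly to $\sum_{i}\one[E_{i}]$ would yield nothing useful, since Markov gives only upper tail bounds; flipping to failures turns the desired lower tail into an upper tail on a nonnegative variable, to which Markov does apply.
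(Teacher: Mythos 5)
Your proof is correct; the paper itself does not supply a proof of this lemma (it simply cites \cite[Lemma~2.1]{TV09}), and your argument — bounding the expected number of \emph{failed} events and applying Markov to that nonnegative count, then taking complements — is exactly the standard derivation one finds in that reference.
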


\section{Permanent expansion formulas}

Just as for the determinant, it is possible to expand the permanent of a matrix in terms of permanents of submatrices.
Below we record two such expansions, which we will use in the proofs of \cref{lem:grow-single-minor,lem:endgame-step}.
\begin{fact}
\label{fact:per-expansion}Let $M$ be an $n\times n$ matrix. Add
a new row $(x_{1},\dots,x_{n})$ to obtain an $(n+1)\times n$
matrix $M'$. Then for any subsets $A,B\subseteq\{ 1,\dots,n\} $
with $|B|=|A|+1$, we have
\[
\per M'[A\cup\{ n+1\} ,B]=\sum_{i\in B}x_{i}\per M[A,B\setminus\{ i\} ].
\]
\end{fact}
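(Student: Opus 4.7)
The plan is to reduce this to the standard Laplace-style expansion of the permanent along a single row, with no sign factors to worry about. First I would observe that the submatrix $M'[A\cup\{n+1\},B]$ is in fact a square matrix: its number of rows is $|A\cup\{n+1\}|=|A|+1=|B|$, so its permanent is well-defined via the definition in \cref{eq:det-per}. Writing $m=|B|$ and fixing any bijection between the row index set $A\cup\{n+1\}$ and $\{1,\dots,m\}$ (and similarly between $B$ and $\{1,\dots,m\}$), the permanent can be expressed as
\[
\per M'[A\cup\{n+1\},B]=\sum_{\sigma}\;x_{\sigma(n+1)}\prod_{r\in A} M_{r,\sigma(r)},
\]
where $\sigma$ ranges over all bijections $A\cup\{n+1\}\to B$, and where I used that the entry of $M'[A\cup\{n+1\},B]$ in row $n+1$ and column $i\in B$ is exactly $x_i$.

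Next I would partition this sum according to the value $i=\sigma(n+1)\in B$. For each fixed $i\in B$, the restriction $\sigma|_A$ ranges exactly over the bijections $A\to B\setminus\{i\}$, and the factor $x_{\sigma(n+1)}=x_i$ can be pulled out of the inner sum. The inner sum over $\sigma|_A$ then collects to $\per M[A,B\setminus\{i\}]$ by the very definition of the permanent of the square submatrix $M[A,B\setminus\{i\}]$, yielding
\[
\per M'[A\cup\{n+1\},B]=\sum_{i\in B} x_i\,\per M[A,B\setminus\{i\}],
\]
as claimed. There is essentially no obstacle here: this is the natural permanent analogue of cofactor expansion along a row, and the fact that the permanent (unlike the determinant) is sign-free is precisely what makes the coefficients come out as $+x_i$ with no $(-1)^{?}$ factor. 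The only minor care needed is to check that all submatrices in question are square and that the bookkeeping of bijections matches up, which is straightforward.
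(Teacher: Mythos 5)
Your proof is correct, and since the paper states this as a \emph{Fact} without proof (it is the standard Laplace-type expansion of the permanent along a row), your argument supplies exactly the natural justification the authors are implicitly invoking. The bookkeeping is right: $M'[A\cup\{n+1\},B]$ is square because $|A\cup\{n+1\}|=|A|+1=|B|$, expanding $\per$ as a sum over bijections $\sigma\colon A\cup\{n+1\}\to B$, pulling out the factor $x_{\sigma(n+1)}$, and partitioning by the value $i=\sigma(n+1)$ gives precisely $\sum_{i\in B}x_i\per M[A,B\setminus\{i\}]$, with no sign factors since the permanent is sign-free.
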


For a matrix $M$, let $M^{(i,j)}$ be the submatrix of
$M$ obtained by removing row $i$ and column $j$.
\begin{fact}
\label{fact:per-double-expansion}Let $M$ be an $n\times n$ matrix.
Add a new row $(x_{1},\dots,x_{n},z)$ and a new column
$(y_{1},\dots,y_{n},z)$ to obtain an $(n+1)\times(n+1)$
matrix $M'$. Then for any subsets $A,B\subseteq\{ 1,\dots,n\} $ with $|A|=|B|$, we have
\[
\per M'[A\cup\{ n+1\} ,B\cup\{ n+1\} ]=z\per M[A,B]+\sum_{i\in A,j\in B}x_{j}y_{i}\per M[A,B]^{(i,j)}.
\]
\end{fact}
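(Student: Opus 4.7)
The plan is to derive this by applying \cref{fact:per-expansion} twice: once expanding along the new row (indexed by $n+1$), and then, for each of the resulting smaller permanents, once along the new column (also indexed by $n+1$). The target matrix $M'[A\cup\{n+1\},B\cup\{n+1\}]$ is square of size $(|A|+1)\times(|A|+1)$ since $|A|=|B|$, so the formulas on both sides make sense.

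More precisely, I would first use \cref{fact:per-expansion} with respect to the added row to split the expansion according to which column of $B\cup\{n+1\}$ is paired with row $n+1$. The entry of $M'$ in row $n+1$ and column $j\in B$ is $x_j$, and the entry in row $n+1$ and column $n+1$ is $z$, so
\[
\per M'[A\cup\{n+1\},B\cup\{n+1\}] = z\per M'[A,B] + \sum_{j\in B} x_j \per M'[A,(B\setminus\{j\})\cup\{n+1\}].
\]
Since $A,B\subseteq\{1,\dots,n\}$ and the rows indexed by $A$ in $M'$ coincide with the corresponding rows of $M$ on columns in $\{1,\dots,n\}$, we can identify $\per M'[A,B]=\per M[A,B]$.

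Next, for each fixed $j\in B$, the submatrix $M'[A,(B\setminus\{j\})\cup\{n+1\}]$ is a square $|A|\times|A|$ matrix whose last column (indexed by $n+1$) has entries $y_i$ for $i\in A$. Applying the column-version of \cref{fact:per-expansion} (equivalently, applying \cref{fact:per-expansion} to the transpose) gives
\[
\per M'[A,(B\setminus\{j\})\cup\{n+1\}] = \sum_{i\in A} y_i \per M[A\setminus\{i\},B\setminus\{j\}] = \sum_{i\in A} y_i \per M[A,B]^{(i,j)},
\]
using the definition of $M^{(i,j)}$. Substituting back yields the claimed identity. There is no genuine obstacle here; the only care needed is to track the bookkeeping of which indices belong to $A$, $B$, or the appended $n+1$, which is straightforward.
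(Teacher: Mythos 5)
The paper states \cref{fact:per-double-expansion} without proof, treating it as a routine expansion, so there is no paper argument to compare against. Your proof is correct and is exactly the argument the paper leaves implicit: expand $\per M'[A\cup\{n+1\},B\cup\{n+1\}]$ along the new row (the $j=n+1$ term giving $z\per M[A,B]$), and then expand each remaining permanent $\per M'[A,(B\setminus\{j\})\cup\{n+1\}]$ along the new column, using $\per M[A\setminus\{i\},B\setminus\{j\}]=\per M[A,B]^{(i,j)}$. One pedantic remark: as literally stated, \cref{fact:per-expansion} assumes $B\subseteq\{1,\dots,n\}$ and a square starting matrix $M$, whereas your first application takes the column set $B\cup\{n+1\}$ (equivalently, adds a row to the $n\times(n+1)$ matrix $(M\mid y)$); this is harmless since the underlying identity is just Laplace expansion of the permanent along a row, which holds regardless, but it is worth noting that you are invoking the idea of \cref{fact:per-expansion} rather than its exact statement.
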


We will use \cref{fact:per-expansion} in combination with the linear anti-concentration inequalities in \cref{fact:non-degenerate-linear} and \cref{lem:LO}, and we will use \cref{fact:per-double-expansion} in combination with the quadratic anti-concentration inequalities in  \cref{fact:non-degenerate} and \cref{lem:MNV}

Observe in particular that the formula in \cref{fact:per-double-expansion} gives an expression for
the permanent of a $(k+1)\times(k+1)$ matrix
in terms of permanents of $(k-1)\times(k-1)$
submatrices (and one $k\times k$ submatrix). This means that, for example, when we add a new row and
column to a matrix, the size of the largest submatrix with nonzero permanent can
increase by two. This observation will be crucial for the proof of \cref{lem:endgame-step}.

In the proof of \cref{lem:endgame-step}, we will apply \cref{fact:per-double-expansion} with the symmetric matrices $M=M_{n}$
and $M'=M_{n+1}$. That is to say, $(x_{1},\dots,x_{n},z)=(y_{1},\dots,y_{n},z)$, so the formula in \cref{fact:per-double-expansion} can be interpreted as a quadratic polynomial in $x_{1},\dots,x_{n}$ (after conditioning on the value of $z$). In order to apply \cref{fact:non-degenerate} to this polynomial, we need this polynomial to have a multilinear degree-2 coefficient with large absolute value. For this, it suffices that $\per M[A,B]^{(i,j)}+\per M[A,B]^{(j,i)}$ has large absolute value for some $i\ne j$ (with $i,j\in A\cap B$). The following lemma will be useful for ensuring this condition.

\begin{lem}
\label{lem:per-no-cancel}Let $M$ be an $n\times n$ matrix and let $A,B\su \{1,\dots,n\}$ be subsets with $|A|=|B|$ such that $M[A,B]$
is $\lambda$-heavy. Suppose we are given an element $a\in B\setminus A$ and distinct elements $b_{1},b_{2}\in A\setminus B$. Then there are distinct $i,j\in\{ a,b_{1},b_{2}\} $
such that
\[
\left|\per M[A',B']^{(i,j)}+\per M[A',B']^{(j,i)}\right|\ge\lambda/2,
\]
where $A'=A\cup\{ a\} $ and $B'=(B\setminus\{ a\})\cup\{ i,j\} $.
\end{lem}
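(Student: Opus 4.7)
My plan is to enumerate the three unordered pairs $\{i,j\}\subseteq \{a,b_1,b_2\}$ and, for each one, write out the two submatrices $M[A',B']^{(i,j)}$ and $M[A',B']^{(j,i)}$ explicitly (noting that $B'=(B\setminus\{a\})\cup\{i,j\}$ depends on the pair). The hope is that all six submatrices simplify to expressions involving only $A$, $B$, $a$, $b_1$, $b_2$ (since the elements freshly added to $A'$ and $B'$ get stripped off again when we remove a row and column), and that a clean identity then expresses $\per M[A,B]$ as a signed combination of the three quantities $P_{\{i,j\}} := \per M[A',B']^{(i,j)} + \per M[A',B']^{(j,i)}$. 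The desired lower bound will then follow from $|\per M[A,B]|\ge \lambda$ via the triangle inequality and pigeonhole.

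Concretely, for $\{i,j\}=\{a,b_1\}$ we have $B'=B\cup\{b_1\}$, so $a\notin A$ and $b_1\notin B$ imply that removing row $a$ and column $b_1$ from $M[A',B']$ recovers $M[A,B]$; hence
\[ P_{\{a,b_1\}} = \per M[A,B] + \per M[(A\setminus\{b_1\})\cup\{a\},\,(B\setminus\{a\})\cup\{b_1\}]. \]
By the analogous computation, $P_{\{a,b_2\}} = \per M[A,B] + \per M[(A\setminus\{b_2\})\cup\{a\},\,(B\setminus\{a\})\cup\{b_2\}]$. For $\{i,j\}=\{b_1,b_2\}$ we have $B'=(B\setminus\{a\})\cup\{b_1,b_2\}$ with $a\notin B'$, and a direct check shows that $P_{\{b_1,b_2\}}$ equals precisely the sum of the two ``off-diagonal'' permanents appearing in the expressions above.

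Adding the three identities yields
\[ P_{\{a,b_1\}} + P_{\{a,b_2\}} - P_{\{b_1,b_2\}} = 2\per M[A,B], \]
and therefore
\[ \max_{\{i,j\}}|P_{\{i,j\}}|\ge \tfrac{2}{3}\bigl|\per M[A,B]\bigr|\ge \tfrac{2\lambda}{3}> \lambda/2, \]
so one of the three pairs $\{i,j\}$ satisfies the required inequality. I do not anticipate a real obstacle here: the argument is essentially index bookkeeping followed by the observation of the above identity, so the only thing to be careful about is tracking which elements belong to $A'$ and $B'$ in each of the three cases and confirming that the two ``off-diagonal'' permanents produced by $P_{\{b_1,b_2\}}$ match up correctly with the non-$\per M[A,B]$ summands of $P_{\{a,b_1\}}$ and $P_{\{a,b_2\}}$.
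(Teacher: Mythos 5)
Your proof is correct, and it takes a cleanly different route from the paper's. The paper fixes WLOG $\per M[A,B]\ge\lambda$ and then does direct casework: if $\per M[(A\setminus\{b_s\})\cup\{a\},(B\setminus\{a\})\cup\{b_s\}]\ge-\lambda/2$ for some $s\in\{1,2\}$ it takes $\{i,j\}=\{a,b_s\}$; otherwise both of those permanents are $<-\lambda/2$ and it takes $\{i,j\}=\{b_1,b_2\}$, whose value is their sum. You instead package the same underlying bookkeeping (that the six deleted-row-and-column submatrices reduce to $\per M[A,B]$, $Q_1:=\per M[(A\setminus\{b_1\})\cup\{a\},(B\setminus\{a\})\cup\{b_1\}]$, and $Q_2$ defined analogously, with $P_{\{a,b_1\}}=P+Q_1$, $P_{\{a,b_2\}}=P+Q_2$, $P_{\{b_1,b_2\}}=Q_1+Q_2$) into the linear identity $P_{\{a,b_1\}}+P_{\{a,b_2\}}-P_{\{b_1,b_2\}}=2\per M[A,B]$, and finish with the triangle inequality and pigeonhole. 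The advantage of your version is that it is symmetric, avoids the WLOG and the two-case split, and actually yields the stronger bound $\max|P_{\{i,j\}}|\ge 2\lambda/3$, strictly better than the $\lambda/2$ the paper asks for; the paper's casework is arguably more elementary (no need to observe the identity) but less tidy. Both are valid; yours is a nice simplification.
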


\begin{proof}
Suppose without loss of generality that $\per M[A,B]\ge\lambda$.
If we have
\[\per M[(A\setminus\{ b_{s}\} )\cup\{ a\} ,(B\setminus\{ a\} )\cup\{ b_{s}\} ]\ge-\lambda/2\]
for some $s\in\{ 1,2\}$, then we can take $i=b_{s}$ and
$j=a$. Indeed, then we have $A'=A\cup\{a\}$ and $B'=B\cup \{b_s\}$, and obtain $\per M[A',B']^{(i,j)}+\per M[A',B']^{(j,i)}\geq (-\lambda/2)+\lambda=\lambda/2$.

Otherwise, if there is no such $s\in\{ 1,2\}$, we can take $i=b_{1}$ and $j=b_{2}$. Then we have $A'=A\cup\{a\}$ and $B'=(B\setminus\{ a\})\cup \{b_1,b_2\}$, and obtain $\per M[A',B']^{(i,j)}+\per M[A',B']^{(j,i)}< (-\lambda/2)+(-\lambda/2)=-\lambda$.
\end{proof}

We end this section with two simple lemmas that illustrate how to apply \cref{fact:per-expansion,fact:per-double-expansion,lem:per-no-cancel} to ``grow'' heavy minors in a random symmetric matrix. Recall that $M_n$ is a random symmetric matrix, and that $M_{n-1}$ contains the first $n-1$ rows and columns of $M_n$.

\begin{lem}
\label{lem:simple-augment}Consider $A,B\su \{1,\dots,n-1\}$ with $\vert A\vert=\vert B\vert$, and fix any nonempty $I\subseteq\{ 1,\dots,n-1\} \setminus B$. Consider any outcome $M$ of $M_{n-1}$ such that $M[A,B]$ is $\lambda$-heavy, for some $\lambda>0$. Then
\[\Pr\left(\text{$M_{n}[A\cup\{ n\}, B\cup\{i\}]$ is $\lambda$-heavy for some $i\in I$}\,\big\vert \,M_{n-1}=M\right)\ge 1-2^{-|I|}.\]
\end{lem}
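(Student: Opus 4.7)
The plan is to expand along the new row~$n$ using \cref{fact:per-expansion}. Write $x_j = M_n[n,j]$ for $j=1,\dots,n-1$; conditional on $M_{n-1}=M$, these are i.i.d.\ Rademacher random variables. Since $i\le n-1$, the submatrix $M_n[A\cup\{n\},B\cup\{i\}]$ does not involve column~$n$, so (despite the symmetry of $M_n$) the only fresh randomness entering is $(x_j)_{j\in B\cup\{i\}}$. Applying \cref{fact:per-expansion} gives
\[ \per M_n[A\cup\{n\},B\cup\{i\}] \;=\; x_i\cdot \per M[A,B] \;+\; \sum_{j\in B} x_j\cdot \per M[A,(B\cup\{i\})\setminus\{j\}], \]
and by hypothesis the coefficient $\per M[A,B]$ of $x_i$ has absolute value at least $\lambda$.

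Next I would condition on an arbitrary outcome of $(x_j)_{j\in B}$. Under this conditioning, for each $i\in I$ the permanent $\per M_n[A\cup\{n\},B\cup\{i\}]$ becomes an affine function of $x_i$ of the form $c\, x_i + d_i$ with fixed leading coefficient $c=\per M[A,B]$ of absolute value at least $\lambda$, and an intercept $d_i$ depending only on the conditioned variables. The two possible values $\pm c + d_i$ differ by $2|c|\ge 2\lambda$, so at least one of them has absolute value at least $\lambda$. Hence for each individual $i\in I$, the conditional probability that $M_n[A\cup\{n\},B\cup\{i\}]$ fails to be $\lambda$-heavy is at most $1/2$.

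The decisive point is that for distinct $i\in I$ these failure events depend on $x_i$ alone beyond the fixed conditioning, and the random variables $(x_i)_{i\in I}$ are mutually independent. Thus the failure events are conditionally independent, the probability that they all occur is at most $2^{-|I|}$, and removing the conditioning on $(x_j)_{j\in B}$ yields the claimed bound.

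I do not anticipate a real obstacle here: the argument is essentially an Erd\H{o}s--Littlewood--Offord-style observation for a single Rademacher variable (cf.\ \cref{fact:non-degenerate-linear}), applied $|I|$ times in parallel. The only point deserving a moment of attention is recognising that the symmetry of $M_n$ plays no role in this lemma, precisely because the chosen columns $B\cup\{i\}$ all lie in $\{1,\dots,n-1\}$ and hence avoid the new index~$n$.
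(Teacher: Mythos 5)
Your argument is correct and follows the same route as the paper: expand along the new row with \cref{fact:per-expansion}, condition on the entries $x_b$ for $b\in B$, and observe that for each $i\in I$ the permanent becomes an affine function of $x_i$ alone with leading coefficient $\per M[A,B]$, so the failure events are conditionally independent with probability at most $1/2$ each. No gaps; this matches the paper's proof.
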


\begin{proof}
We condition on $M_{n-1}=M$. Let $x_{1},\dots,x_{n}$ be the entries in the last row of $M_{n}$.
Let us also condition on any outcome of the variables $x_{b}$ for $b\in B$. Now, by \cref{fact:per-expansion}, for each $i\in I$ we have
\[\per M_{n}[A\cup\{ n\} ,B\cup\{i\}] =x_i\per M_{n-1}[A ,B]+\sum_{b\in B}x_b\per M_{n-1}[A ,(B\cup\{i\})\setminus\{b\}].\]
Since $\vert \per M_{n-1}[A ,B]\vert\geq \lambda$, each $i\in I$ satisfies the desired condition $\vert\per M_{n}[A\cup\{ n\} ,B\cup\{i\}]\vert \geq \lambda$ with probability at least $1/2$, and (by our conditioning on the variables $x_{b}$ for $b\in B$) this happens independently for all $i\in I$.
\end{proof}

\begin{lem}
\label{lem:corner-might-work}Consider $A,B\su \{1,\dots,n-1\}$ with $\vert A\vert=\vert B\vert$, and consider an outcome $M$ of $M_{n-1}$ such that $M[A,B]$ is $\lambda$-heavy, for some $\lambda>0$. Then for any $a\in B\setminus A$ and any distinct $b_{1},b_{2}\in A\setminus B$, 
we can choose distinct $i,j\in\{ a,b_{1},b_{2}\}$ such that
\[
\Pr\left(M_{n}[A\cup\{ a,n\} ,(B\setminus\{ a\} )\cup\{ i,j,n\}]\text{ is }(\lambda/2)\text{-heavy}\,\big\vert\,M_{n-1}=M\right)\ge\frac14.
\]
\end{lem}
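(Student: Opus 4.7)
The plan is to condition on $M_{n-1}=M$ and to express the permanent of the enlarged matrix as a quadratic polynomial in the new Rademacher entries via \cref{fact:per-double-expansion}, then to apply the quadratic anti-concentration bound \cref{fact:non-degenerate}. What we need is a multilinear degree-$2$ coefficient of absolute value at least $\lambda/2$, and \cref{lem:per-no-cancel} is tailor-made to produce one.

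Explicitly, first invoke \cref{lem:per-no-cancel} to select the distinct $i,j\in\{a,b_1,b_2\}$ for which
\[\left|\per M[A',B']^{(i,j)}+\per M[A',B']^{(j,i)}\right|\geq \lambda/2,\]
where $A'=A\cup\{a\}$ and $B'=(B\setminus\{a\})\cup\{i,j\}$. These $i,j$ will be the ones appearing in the conclusion of the lemma, and the matrix to analyze is exactly $M_n[A'\cup\{n\},B'\cup\{n\}]$. Writing $x_1,\dots,x_{n-1}$ for the new off-diagonal entries (where $x_k=(M_n)_{n,k}=(M_n)_{k,n}$) and $z=(M_n)_{n,n}$, after conditioning on $M_{n-1}=M$ these are independent Rademacher random variables.

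Now \cref{fact:per-double-expansion}, applied to $M$ and using that the added row and column coincide by the symmetry of $M_n$, yields
\[\per M_n[A'\cup\{n\},B'\cup\{n\}] = z\,\per M[A',B'] + \sum_{p\in A',\, q\in B'} x_p x_q\,\per M[A',B']^{(p,q)}.\]
The right-hand side is a quadratic polynomial in the independent Rademacher variables $x_1,\dots,x_{n-1},z$. Since $i,j\in\{a,b_1,b_2\}\subseteq A'$ and also $i,j\in B'$, after multilinearizing (using $x_p^2=1$) the coefficient of $x_ix_j$ collects exactly the $(p,q)=(i,j)$ and $(p,q)=(j,i)$ contributions, and therefore equals $\per M[A',B']^{(i,j)}+\per M[A',B']^{(j,i)}$, which has absolute value at least $\lambda/2$ by our choice of $i,j$. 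Applying \cref{fact:non-degenerate} with $r=\lambda/2$ then gives
\[\Pr\bigl(|\per M_n[A'\cup\{n\},B'\cup\{n\}]|<\lambda/2\,\bigm|\,M_{n-1}=M\bigr)\le 3/4,\]
which is exactly the bound we need.

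The only delicate point is ensuring non-cancellation in the coefficient of $x_ix_j$: because $M_n$ is symmetric, both $(p,q)=(i,j)$ and $(p,q)=(j,i)$ feed into this coefficient, and in principle they could cancel. This is precisely the obstacle resolved by \cref{lem:per-no-cancel}, which exploits the freedom to pick $i,j$ among the three candidates in $\{a,b_1,b_2\}$.
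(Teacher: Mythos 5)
Your proof is correct and follows essentially the same approach as the paper: invoke \cref{lem:per-no-cancel} to choose $i,j$ with a large coefficient, expand via \cref{fact:per-double-expansion}, and conclude with \cref{fact:non-degenerate}. The observation that the coefficient of $x_ix_j$ aggregates the $(i,j)$ and $(j,i)$ contributions, and your remark on the cancellation issue that \cref{lem:per-no-cancel} is designed to avoid, both match the paper's reasoning exactly.
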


\begin{proof}Let us condition on $M_{n-1}=M$. By \cref{lem:per-no-cancel}, we can choose distinct $i,j\in\{ a,b_{1},b_{2}\}$ such that
\begin{equation}\label{eq-proof-corner-might-work}
\left|\per M_{n-1}[A',B']^{(i,j)}+\per M_{n-1}[A',B']^{(j,i)}\right|\ge\lambda/2,
\end{equation}
where $A'=A\cup\{ a\} $ and $B'=(B\setminus\{ a\} )\cup\{ i,j\}$. Note that $\{i,j\}\su \{ a,b_{1},b_{2}\}\su A'$ and that clearly $\{ i,j\}\su B'$.

Now, $\per M_{n}[A\cup\{ a,n\} ,(B\setminus\{ a\} )\cup\{ i,j,n\}]=\per M_n[A'\cup\{n\}, B'\cup\{n\}]$, so it suffices to show that with probability at least $1/4$ we have $\vert \per M_n[A'\cup\{n\}, B'\cup\{n\}]\vert\geq \lambda/2$.

Let $(x_1,\dots,x_{n-1},z)$ be the random entries of the last row (and the last column) of $M_n$. By \cref{fact:per-double-expansion}, we have
\[\per M_n[A'\cup\{n\}, B'\cup\{n\}]=z\per M_{n-1}[A',B']+\sum_{k\in A', \ell\in B'}x_kx_{\ell}\per M_{n-1}[A',B']^{(k,\ell)}.\]
Note that this is a quadratic polynomial in the variables $x_1,\dots,x_{n-1},z$, and the coefficient of $x_ix_j$ is precisely $\per M_{n-1}[A',B']^{(i,j)}+\per M_{n-1}[A',B']^{(j,i)}$. Recalling $i\neq j$ and \cref{eq-proof-corner-might-work}, \cref{fact:non-degenerate} now implies that $\Pr(\vert \per M_n[A'\cup\{n\}, B'\cup\{n\}]\vert< \lambda/2)\leq 3/4$.  This finishes the proof of \cref{lem:corner-might-work}.
\end{proof}

\section{Proof of \texorpdfstring{\cref{lem:grow-single-minor}}{Lemma~\ref{lem:grow-single-minor}}: growing a single heavy submatrix}\label{sec:grow-single-minor}

In this section we prove \cref{lem:grow-single-minor}. Recall that $L=\lfloor (\log n)/10\rfloor$ and that $X,Y\su \{1,\dots,n\}$ are disjoint subsets with $\vert X\vert=L$ and $\vert Y\vert=3L$. By reordering the rows and columns, we can assume without loss of generality that $X=\{ 1,\dots,L\}$ and $Y=\{ n-3L+1,\dots,n\}$.

\cref{lem:grow-single-minor} will be a consequence of the following two lemmas. The first of these lemmas is itself a weaker version of \cref{lem:grow-single-minor} (it also produces a heavy submatrix, but with less control over where it lies, and not with dimensions as close to $n\times n$).

\begin{lem}
\label{lem:grow-single-minor-weak}For any fixed $0<\delta<1/16$, the following holds for all integers $n\in \NN$ that are sufficiently large with respect to $\delta$. Let $\lambda=n^{(1/2-8\delta)n}$ and suppose that $R\in \NN$ satisfies $\delta n\le R\le 2\delta n$. Then with probability at least $1-e^{-\delta^2 n}$ there is a subset $B\subseteq\{ 1,\dots,n\} $ of size $n-R$ such that $M_{n}[\{ R+1,\dots,n\} ,B]$ is $\lambda$-heavy.
\end{lem}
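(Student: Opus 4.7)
\textbf{Proof plan for \cref{lem:grow-single-minor-weak}.}

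The strategy is to inductively expose the principal submatrix $M_k$ of $M_n$ for $k = R, R+1, \dots, n$, adding one new row and one new column at each step (with the same entries, by symmetry). At each step we maintain a heavy submatrix $M_k[A_k, B_k]$ with row set $A_k \subseteq \{R+1, \dots, k\}$ of size $k - R$ and column set $B_k \subseteq \{1, \dots, k\}$ of size $k - R$. Since we always take $A_k = A_{k-1} \cup \{k\}$, the final row set is $A_n = \{R+1, \dots, n\}$, as required. We choose $B_k = B_{k-1} \cup \{i_k\}$ with $i_k \in \{1, \dots, k-1\} \setminus B_{k-1}$, a set of size $R \geq \delta n$. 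We avoid adding column $k$ itself so that the new permanent is a \emph{linear} form in the fresh row entries (rather than a quadratic form due to the symmetry coupling).

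By \cref{fact:per-expansion}, $\per M_k[A_k, B_k]$ is a linear form in the new row's entries $(x_1,\dots,x_{k-1})$, whose coefficient of $x_{i_k}$ is $\per M_{k-1}[A_{k-1}, B_{k-1}]$ (heavy by induction), and whose coefficient of $x_b$ for $b \in B_{k-1}$ is the permanent of the ``column-swapped'' submatrix $M_{k-1}[A_{k-1}, (B_{k-1}\setminus\{b\})\cup\{i_k\}]$. To reach the target $\lambda = n^{(1/2 - 8\delta) n}$ over $n - R$ steps, the heaviness must grow by an average factor of roughly $k^{1/2 - O(\delta)}$ per step. This gain is produced by the Erd\H{o}s--Littlewood--Offord inequality (\cref{lem:LO}): if the linear form has $m$ coefficients of absolute value $\geq \lambda_{k-1}$, then $\Pr(|f| \leq t\lambda_{k-1}) \leq 3t/\sqrt{m}$, and we can afford to boost by $t \approx \sqrt{m}$.

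The crux, then, is guaranteeing that at each step many coefficients of the linear form are large, i.e.\ that many of the column-swapped submatrices are heavy. I would handle this by strengthening the inductive hypothesis: rather than maintaining a single heavy $M_k[A_k, B_k]$, maintain a sizeable \emph{family} of column sets $\mathcal{B}_k$ (each of size $k-R$, all sharing the same row set $A_k$) such that $M_k[A_k, B]$ is heavy for every $B \in \mathcal{B}_k$. At each step, when choosing $i_k$, the members of $\mathcal{B}_{k-1}$ that differ from $B_{k-1}$ by a single column swap (replacing some $b \in B_{k-1}$ with $i_k$) are precisely the ones that contribute large coefficients to the linear form $f$. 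One then has to show this family remains large across iterations: the ``growth step'' replaces $\mathcal{B}_{k-1}$ by a new family $\mathcal{B}_k$ of column sets of size $k - R$ in $\{1,\dots,k\}$, using the fresh randomness to certify heaviness, and a Chernoff-type bound (\cref{lem:Chernoff}) or supermartingale argument (\cref{lem:AH2}) shows that only a controllable fraction of the family is lost per step. The probability that any individual step fails can then be made exponentially small --- roughly $e^{-\Omega(\delta n)}$ --- by exploiting the $R \geq \delta n$ independent choices of $i_k$, so a union bound over the $n - R$ steps gives the required total failure probability $e^{-\delta^2 n}$, and the resulting $(n-R)\times (n-R)$ submatrix is $\lambda_n$-heavy with $\lambda_n \geq \lambda$.

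\textbf{Main obstacle.} The principal difficulty is engineering the inductive hypothesis: we need the family $\mathcal{B}_k$ to be large enough to feed \cref{lem:LO} with a nontrivial matching of large coefficients, while being consistent across steps and robust to the losses incurred each time we add a row. The ``$-8\delta$'' slack in the heaviness exponent is precisely what allows several small losses per step (from coefficients that happen to be slightly small, from the $R \leq 2\delta n$ missing columns, and from the suboptimal per-step boost relative to the ``ideal'' factor $\sqrt k$) to be absorbed without compromising the final heaviness bound.
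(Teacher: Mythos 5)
Your high-level strategy matches the paper's: track a family $\mathcal{B}_k$ of column sets all giving heavy submatrices, and use the Littlewood--Offord inequality to boost the heaviness threshold by roughly $\sqrt{n}$ per step. But there is a genuine gap in how you propose to close the argument, stemming from a misestimation of the per-step failure probability.

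You claim that "the probability that any individual step fails can then be made exponentially small --- roughly $e^{-\Omega(\delta n)}$" and that a union bound over the $n-R$ steps finishes the proof. This is not achievable. The Littlewood--Offord boost of $t\approx m^{1/2-\delta}$ (Theorem~\cref{lem:LO}) only succeeds with probability $1-O(m^{-\delta})$, which is polynomially small even in the ideal case $m\approx n$. So you cannot hope to union-bound the boost across $n-R$ steps; you would end up with failure probability around $n^{1-\delta}$, not $e^{-\delta^2 n}$. Worse, the boost relies on the new column set $B_{k-1}\cup\{i_k\}$ having many "parents" in $\mathcal{B}_{k-1}$ (i.e.\ many $b$ with $(B_{k-1}\setminus\{b\})\cup\{i_k\}\in\mathcal{B}_{k-1}$), and nothing you say guarantees this: the family $\mathcal{B}_{k-1}$ could be large but spread out, so that every prospective child has only one parent. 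You identify this as "the main obstacle" but do not resolve it.

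The paper's proof of this lemma (via \cref{lem:grow-large-minors}) resolves both issues with a dichotomy that your plan is missing. At each step, either most prospective children have few parents --- in which case the family \emph{size} $N$ jumps up by a factor of roughly $n^{\delta}$ with probability $\ge 1/3$ (\cref{eq:breed}) --- or most have at least $K\approx n^{1-\delta}$ parents, in which case the LO boost to $\lambda^+=K^{1/2-\delta}\lambda$ succeeds with probability $1-O(K^{-\delta})$ (\cref{eq:grow}). Crucially, the first alternative can only occur $O(\delta n)$ times before $N$ would exceed $2^n$ (Claim~\cref{claim:dangerous-case}), forcing the second alternative to dominate. Since neither alternative succeeds with exponentially high probability, the paper replaces your union bound with a supermartingale concentration argument (\cref{lem:AH2}) applied to a carefully designed potential function $W_k$, combined with an always-available "survival" bound (\cref{eq:survive}, which \emph{is} exponentially likely) to ensure the process never aborts. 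Without this dichotomy and the supermartingale accounting, your plan does not close.
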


To prove \cref{lem:grow-single-minor-weak} we adapt an argument in Tao and Vu's work \cite{TV09}, simultaneously tracking the propagation and growth of many heavy submatrices.

Our second lemma takes a heavy submatrix of a certain form with dimensions reasonably close to $n\times n$, and produces a slightly less heavy submatrix with dimensions much closer to $n\times n$, whose row and column sets satisfy the desired conditions in \cref{lem:grow-single-minor} (recall that we are assuming that $X=\{ 1,\dots,L\}$ and $Y=\{ n-3L+1,\dots,n\}$). To be more precise, we actually start with a submatrix contained inside the $n'\times n'$ matrix $M_{n'}$ for some $n'$ slightly smaller than $n$, and, conditioning on the outcome of $M_{n'}$, we only use the randomness from the additional rows and columns exposed when extending $M_{n'}$ to $M_n$

\begin{lem}
\label{lem:grow-single-minor-end}There is an absolute constant $c>0$ such that the following holds for all sufficiently large integers $n\in \NN$. Consider $\lambda>0$ and integers $L$ and $R$ satisfying $(\log n)/20<L<L^2<R< (n-5L^2-3L)/9$, and let $n'=n-8R-5L^2-3L$ and $\lambda'=\lambda/2^{R-L}$. Condition
on an outcome of $M_{n'}$ for which there is a subset $B\subseteq\{ 1,\dots,n'\} $
of size $n'-R$ such that $M_{n'}[\{ R+1,\dots,n'\} ,B]$
is $\lambda$-heavy. Then with probability at least $1-(1/8)\cdot n^{-c}$,
there is a set $B'$ of size $n-L$ with $\{ 1,\dots,n-3L\} \subseteq B'\subseteq \{ 1,\dots,n\}$, such
that $M_{n}[\{ L+1,\dots,n\} ,B']$ is $\lambda'$-heavy.
\end{lem}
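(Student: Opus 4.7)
The plan is to extend $M_{n'}$ to $M_n$ through the $n - n' = 8R + 5L^2 + 3L$ extension steps (each revealing a new random row and column), maintaining a heavy submatrix at each stage by applying either \cref{lem:simple-augment} (which grows the submatrix by one row and one column with failure probability at most $2^{-|I|}$ and no loss in $\lambda$) or \cref{lem:corner-might-work} (which grows by two rows and two columns, including one specified old row, at a factor-$2$ cost in $\lambda$ and success probability only $\geq 1/4$). A counting argument forces the structure: since we must grow from $(n'-R)\times(n'-R)$ to $(n-L)\times(n-L)$ (an increase of $9R+5L^2+2L$ in each dimension) in only $n-n'$ steps, we need exactly $R-L$ successful corner steps plus $7R+5L^2+4L$ simple-augments. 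This accounts for the advertised loss $\lambda' = \lambda/2^{R-L}$, and the corner steps are precisely the mechanism by which the old rows $\{L+1,\dots,R\}$ enter the row set.

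The schedule I would use runs as follows. First, apply simple-augments to add all columns in $\{L+1,\dots,R\}\setminus B_0$ to $B$; this ensures that whenever a corner step targets an old row $a\in\{L+1,\dots,R\}$, we have $a\in B\setminus A$ as required by \cref{lem:corner-might-work}. In the main phase, attempt corner at $\Theta(R-L)$ carefully chosen steps, all lying within $\{n'+1,\dots,n-3L\}$ so that the ``new'' column $k$ added by the corner step is always a required target column. On each attempt, pick the target old row $a\in\{L+1,\dots,R\}$ together with $b_1,b_2\in A\setminus B$ (which is nonempty because simple-augment keeps adding new rows $k$ to $A$ that are not in $B$). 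Use the remaining simple-augment steps to ensure $\{1,\dots,n-3L\}\subseteq B'$ and to pick the $2L$ flexible columns from $\{n-3L+1,\dots,n\}$.

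The two main obstacles are as follows. First, \cref{lem:corner-might-work} succeeds with probability only $\geq 1/4$, so I would allocate $\Theta(R-L)$ attempts and apply the Chernoff bound (\cref{lem:Chernoff}) to obtain $\geq R-L$ successes with failure probability $e^{-\Omega(R-L)}$; since $R-L>L^2-L=\Omega((\log n)^2)$, this is super-polynomially small. Failed corner attempts at step $k$ can be replaced by a simple-augment on the same step $k$ (simple-augment depends only on the $\lambda$-heaviness of the submatrix in $M_{k-1}$ and a candidate set $I_k$), so they do not waste steps. Second, in the ``case 2'' branch of \cref{lem:corner-might-work} (when $\{i,j\}=\{b_1,b_2\}$) the column $a$ is removed from $B$; since $a\in\{L+1,\dots,R\}\subseteq\{1,\dots,n-3L\}$ must ultimately be in $B'$, I would use a subsequent simple-augment step to re-add column $a$. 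A careful accounting confirms that even if every corner success is case~2, the $7R+5L^2+4L$ simple-augment slots comfortably accommodate the $\leq R-L$ re-adds together with the remaining target-column additions.

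For the overall failure bound, I would order the column additions so that the candidate set $I_k$ at each simple-augment step has size on the order of $\log n$ or more (by keeping a plentiful supply of pending target columns available at every step); the per-step failure is then at most $n^{-\Omega(1)}$, and a union bound over the $O(n)$ simple-augment steps combined with the super-polynomial Chernoff bound for corner attempts yields total failure at most $(1/8)\cdot n^{-c}$, as required.
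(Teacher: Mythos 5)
Your overall blueprint matches the paper's: you iteratively extend the heavy submatrix from $M_{n'}$ to $M_n$ using \cref{lem:simple-augment} for dimension-one growth and \cref{lem:corner-might-work} for dimension-two growth, and the dimension counting (exactly $R-L$ corner successes, loss factor $2^{R-L}$) is exactly right. The paper does the same thing, but packages it into two reusable sublemmas, \cref{lem:iterative-cover} (filling in required columns without losing heaviness) and \cref{lem:iterative-growth} (reducing the row-deficit from $S$ to $T$ at a cost of $2^{S-T}$), which are then chained four times: cover with $S=R$, growth $R\to L^2$, growth $L^2\to L$, cover with $S=L$.

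The main gap in your argument is the failure accounting, and it is not cosmetic. You propose a union bound over $O(n)$ simple-augment steps, each failing with probability $n^{-\Omega(1)}$. This only yields a polynomially small total if each per-step failure is at most $n^{-1-c}$, i.e.\ if $|I_k|\ge (1+c)\log_2 n$. But the final target has deficit $L=\lfloor(\log n)/10\rfloor\approx 0.1\log n$, which is far smaller than $\log_2 n\approx 1.44\log n$. The deficit $m-|B|$ decreases from $R$ down to $L$ only on corner successes; once it equals $L$, every remaining simple-augment step (if you use $I=$ all missing columns) has failure probability $2^{-L}\approx n^{-(\log 2)/10}\approx n^{-0.07}$. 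If the corners cluster early (as your ``main phase'' suggests, and as a Chernoff argument with a $\Theta(R-L)$-attempt budget cannot rule out), you can easily end up with $\Theta(R)=\Theta(n)$ steps at deficit $L$, and the union bound blows up to $\Theta(n^{0.93})$. Using instead $I=$ missing \emph{required} columns is worse: that set is eventually forced to shrink to size $1$, with per-step failure $1/2$. The paper avoids both problems via two devices that your sketch does not contain. First, the failure/progress decoupling inside \cref{lem:iterative-cover,lem:iterative-growth}: the failure event is estimated with $I$ equal to \emph{all} missing columns (keeping $|I|$ equal to the full deficit), while progress (reducing the potential $Q_m$) is tracked separately via Chernoff with probability $\ge 1/2$ or $\ge 1/4$ per step, so that occasionally ``adding the wrong column'' is harmless rather than fatal. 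Second, and crucially, the two-stage application of \cref{lem:iterative-growth} (first $R\to L^2$, then $L^2\to L$) guarantees that the deficit stays $\ge L^2$ during the first, $\Theta(R)$-step phase (so per-step failure there is $2^{-L^2}$, negligible even after multiplying by $n$), and reaches the dangerous range $[L,L^2)$ only during the last $O(L^2)$-step phase, where the union bound $O(L^2\cdot 2^{-L})=n^{-\Omega(1)}$ is genuinely small. Without this two-stage control you have no way to bound the number of steps at deficit $\approx L$, and your union bound fails. The rest of your sketch (Chernoff for corner successes, re-adding a column lost to Case~2 of \cref{lem:per-no-cancel}, locating the corner steps so the new index is a required column) is in the right spirit but would need to be rebuilt on top of this missing control structure.
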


It is now easy to deduce \cref{lem:grow-single-minor} from the two lemmas above.

\begin{proof}[Proof of \cref{lem:grow-single-minor}]
Let $c>0$ be the constant in \cref{lem:grow-single-minor-end}. Recall that we are considering some $\eps>0$ and that we are assuming that $n$ is sufficiently large with respect to $\eps$. Then in particular $L=\lfloor (\log n)/10\rfloor> (\log n)/20$. As mentioned at the beginning of this section, we may assume that $X=\{ 1,\dots,L\}$ and $Y=\{ n-3L+1,\dots,n\}$.

Let $\delta=\eps/32$, and $R=\lceil \delta n\rceil $, and note that by our assumption that $n$ is large with respect to $\eps$ we have $L<L^2<R<(n-5L^2-3L)/9$. Now let $n'=n-8R-5L^2-3L\geq (1-9\delta)n$ and  $\lambda=(n')^{(1/2-8\delta)n'}\geq n^{(1/2-15\delta)n}$ (again recalling that we assume $n$ to be large with respect to $\eps$). Note that then $\delta n'\leq R\leq 2\delta n'$.

By \cref{lem:grow-single-minor-weak}, with probability at least $1-e^{-\delta^2 n'}\geq 1-(1/8)\cdot n^{-c}$ (for $n$ sufficiently large with respect to $\eps$) there is a subset $B\subseteq\{ 1,\dots,n'\} $ of size $n'-R$ such that $M_{n'}[\{ R+1,\dots,n'\} ,B]$ is $\lambda$-heavy. Then by \cref{lem:grow-single-minor-end} and our choice of $c$, with probability at least $1-(1/8)\cdot n^{-c}$ there is a set $B'$ of size $n-L$ with $\{ 1,\dots,n-3L\} \subseteq B'\subseteq \{ 1,\dots,n\}$ such that $M_{n}[\{ L+1,\dots,n\} ,B']$ is $\lambda'$-heavy, where $\lambda'=\lambda/2^{R-L}\ge n^{(1/2-15\delta)n}/2^{\delta n}\ge n^{(1/2-16\delta)n}=n^{(1-\eps)n/2}$. Thus, the total probability that such a set $B'$ exists is at least $1-(1/4)\cdot n^{-c}$, as desired.
\end{proof}

\cref{lem:grow-single-minor-weak} will be proved in \cref{sec:grow-single-minor-weak} and \cref{lem:grow-single-minor-end} will be proved in \cref{sec:grow-single-minor-end}.

\subsection{Proof of \texorpdfstring{\cref{lem:grow-single-minor-weak}}{Lemma~\ref{lem:grow-single-minor-weak}}: propagation of heavy submatrices}\label{sec:grow-single-minor-weak}

In this subsection we prove \cref{lem:grow-single-minor-weak}, adapting an argument from \cite{TV09} to simultaneously track the propagation and growth of heavy submatrices as we expose more rows and columns of our random matrix. Roughly speaking, at each step we track submatrices of a certain form (with dimension growing by 1 at each step). At a given step, if we are guaranteed that many of our submatrices under consideration are heavy, then it is extremely likely that at the next step there will also be reasonably many heavy submatrices of the desired form. Moreover, depending on the structure of our random matrix at this step, one of the following is true, which will likely improve our situation in one of two ways. Either we have a good chance to dramatically increase the number of heavy submatrices in the next step, or we have a (very) good chance to have many submatrices in the next step which are much heavier than before. \cref{lem:grow-large-minors} below makes this precise.

After having proved \cref{lem:grow-large-minors}, we will deduce \cref{lem:grow-single-minor-weak} by iteratively applying \cref{lem:grow-large-minors}, adding a new row and a new column to our random matrix at every step. Most likely, there will be many steps where our situation improves in one of the two ways described above. However, there is an upper bound for the number of heavy submatrices that we can have at the end of the process (simply by counting the total number of submatrices of the form that we consider). Hence the first type of improvement, which significantly increases the number of heavy submatrices, cannot occur too many times. So, among the two ways we can ``improve the situation'', the second type of improvement must happen most of the time. This means that during our process we get submatrices that are more and more heavy, and at the end we find a reasonably large number of very heavy submatrices in our final matrix $M_n$ (in fact, we only need one such very heavy submatrix).

\begin{lem}
\label{lem:grow-large-minors}Fix $R, n\in \NN$. For $k\in \NN$ and real numbers $N>0$ and $\lambda>0$, let $E(k,N,\lambda)$ denote the event that there are at least $N$ different subsets $B\subseteq\{ 1,\dots,k+R\} $ with $\vert B\vert=k$ such that the matrix $M_{k+R}[\{ R+1,\dots,k+R\} ,B]$ is $\lambda$-heavy.

Then for any $k\in \NN$ with $k+R\le n$, and any real numbers $0<\delta<1/2$ as well as $K>1$, $\lambda>0$ and $N>0$, there is a partition $E(k,N,\lambda)=E'(k,N,\lambda)\cup E''(k,N,\lambda)$ of the event $E(k,N,\lambda)$ such that the following holds. Let $N^{+}=RN/(8K)$, $N^{-}=RN/(8n)$ and $\lambda^{+}=K^{1/2-\delta}\lambda$, and let $M,M',M''$ be any possible outcomes of $M_{k+R}$ satisfying $E(k,N,\lambda)$, $E'(k,N,\lambda)$ and $E''(k,N,\lambda)$ respectively. Then
\begin{align}
\Pr\left(E(k+1,N^{-},\lambda)\cond M_{k+R}=M\right)  &\ge 1-2e^{-R/8}.\label{eq:survive}\\
\Pr\left(E(k+1,N^{+},\lambda)\cond M_{k+R}=M'\right)  &\ge1/3.\label{eq:breed}\\
\Pr\left(E(k+1,N^{-},\lambda^{+})\cond M_{k+R}=M''\right) &\ge 1-4K^{-\delta}.
\label{eq:grow}
\end{align}
\end{lem}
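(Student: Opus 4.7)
The plan is to expose the new row and column of $M_{k+R+1}$ and, for each $(k+1)$-subset $B'\subseteq\{1,\dots,k+R\}$, view the permanent of the submatrix $M_{k+R+1}[\{R+1,\dots,k+R+1\},B']$ via \cref{fact:per-expansion} as the linear polynomial
\[
f_{B'}(x)=\sum_{i\in B'}x_i\per M_{k+R}[\{R+1,\dots,k+R\},B'\setminus\{i\}]
\]
in the new row entries $x_1,\dots,x_{k+R}$; the coefficient of $x_i$ has magnitude at least $\lambda$ precisely when $B'\setminus\{i\}$ is one of the $N$ ``good'' $k$-subsets witnessing $E(k,N,\lambda)$. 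Let $\operatorname{goodness}(B')$ count these good $i\in B'$ and set $\mathcal{B}_m=\{B':\operatorname{goodness}(B')\ge m\}$. A simple double count (each good $B$ paired with each of the $R$ choices of $i\in\{1,\dots,k+R\}\setminus B$) gives $\sum_{B'}\operatorname{goodness}(B')=NR$. I would partition $E(k,N,\lambda)$ via the $M_{k+R}$-measurable condition $|\mathcal{B}_K|\ge NR/(4n)$: call this $E''$, and let $E'$ be its complement inside $E(k,N,\lambda)$. Bounding $\operatorname{goodness}\le k+1\le n$ on $\mathcal{B}_K$ and $\operatorname{goodness}<K$ on $\mathcal{B}_1\setminus\mathcal{B}_K$, the defining inequality of $E'$ forces $K|\mathcal{B}_1|> NR-n\cdot NR/(4n)=3NR/4$, hence $|\mathcal{B}_1|>3NR/(4K)$.

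For the survival bound \cref{eq:survive}, the key observation is that even a single good $B$ contributes many heavy extensions with high probability. Fix such a $B$ and let $W_B=\#\{i\in\{1,\dots,k+R\}\setminus B:|f_{B\cup\{i\}}|\ge\lambda\}$. Conditional on $(x_j)_{j\in B}$, the $R$ events ``$|f_{B\cup\{i\}}|\ge\lambda$'' depend on the pairwise disjoint variables $x_i$ and so are conditionally independent; each has probability $\ge 1/2$ by \cref{fact:non-degenerate-linear} because the coefficient of $x_i$ in $f_{B\cup\{i\}}$ is the fixed quantity $\pm\per M_{k+R}[\{R+1,\dots,k+R\},B]$ of absolute value $\ge\lambda$. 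Hence $W_B$ stochastically dominates $\Bin(R,1/2)$, and Chernoff (\cref{lem:Chernoff}) gives $\Pr(W_B\ge R/4)\ge 1-e^{-R/8}$. Applying \cref{lem:Markov} to the $N$ good sets with $p=1-e^{-R/8}$ and $q=1/2$, with probability $\ge 1-2e^{-R/8}$ at least $N/2$ good $B$s satisfy $W_B\ge R/4$, so $\sum_BW_B\ge NR/8$; since each heavy $(k+1)$-subset $B'$ is counted at most $|B'|=k+1\le n$ times in this sum (once for each good $B\subseteq B'$), we obtain $\ge NR/(8n)=N^-$ distinct heavy $(k+1)\times(k+1)$ submatrices, establishing $E(k+1,N^-,\lambda)$.

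For the breeding bound \cref{eq:breed} under $E'$, every $B'\in\mathcal{B}_1$ has a coefficient of magnitude $\ge\lambda$ in $f_{B'}$, so by \cref{fact:non-degenerate-linear} the polynomial satisfies $|f_{B'}|\ge\lambda$ with probability $\ge 1/2$; \cref{lem:Markov} with $p=1/2$ and $q=1/4$ then yields $\ge|\mathcal{B}_1|/4\ge 3NR/(16K)>N^+$ heavy $B'$s with probability $\ge 1/3$. For the growing bound \cref{eq:grow} under $E''$, every $B'\in\mathcal{B}_K$ has at least $K$ coefficients of magnitude $\ge\lambda$ in $f_{B'}$, so applying the Erd\H{o}s--Littlewood--Offord inequality (\cref{lem:LO}) with $t=K^{1/2-\delta}$, $r=\lambda$, and $m=K$ (using the sharper $(\lceil t\rceil+1)\binom{K}{\lfloor K/2\rfloor}2^{-K}$ form, which is at most $2K^{-\delta}$ in the nonvacuous range of the conclusion) gives $|f_{B'}|>\lambda^+=K^{1/2-\delta}\lambda$ with probability $\ge 1-2K^{-\delta}$; \cref{lem:Markov} with $p=1-2K^{-\delta}$ and $q=1/2$ then produces $\ge|\mathcal{B}_K|/2\ge NR/(8n)=N^-$ many $\lambda^+$-heavy $B'$s with probability $\ge 1-4K^{-\delta}$.

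The main obstacle is the survival step: a naive application of \cref{fact:non-degenerate-linear} to the $B'\in\mathcal{B}_1$ combined with \cref{lem:Markov} yields only a constant success probability, whereas \cref{eq:survive} demands the exponentially small failure probability $2e^{-R/8}$. The resolution is to trade ``independence across different $B'$s'' for ``independence across different single-element extensions of a fixed good $B$'': after conditioning on the coordinates $(x_j)_{j\in B}$, the $R$ extensions $B\cup\{i\}$ depend on genuinely disjoint variables $x_i$, which converts $W_B$ into a sum of independent $\Ber(\ge 1/2)$ indicators and unlocks a Chernoff concentration strong enough to feed into \cref{lem:Markov}.
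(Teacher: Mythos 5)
Your argument follows the paper's proof very closely: the same expansion of $\per M_{k+R+1}[\{R+1,\dots,k+R+1\},B']$ via \cref{fact:per-expansion}, the same double count $\sum_{B'}\operatorname{goodness}(B')=NR$, and---crucially---the same trick of conditioning on $(x_j)_{j\in B}$ for a fixed witness $B$ to get independence across its $R$ single-element extensions, feeding Chernoff to establish \cref{eq:survive}. The breed/grow split via \cref{fact:non-degenerate-linear}, \cref{lem:LO}, and \cref{lem:Markov} is likewise the same. The one structural difference is the partition criterion: the paper splits on whether $\sum_{q<K}q|S_q|\geq RN/2$, whereas you split on whether $|\mathcal{B}_K|\geq NR/(4n)$. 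Your $E'$ is a strict subset of the paper's $E'$ (and your $E''$ a strict superset of theirs), so this is a genuinely different decomposition---but it serves exactly the same purpose.

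There is, however, a small quantitative gap in your growth step. Your threshold for $E''$ gives only $|\mathcal{B}_K|\geq NR/(4n)$, so to reach $N^-=RN/(8n)$ via \cref{lem:Markov} you are forced to take $q=1/2$, which then requires the per-$B'$ success probability $p\geq 1-2K^{-\delta}$. You claim the sharper form $(\lceil t\rceil+1)\binom{m}{\lfloor m/2\rfloor}2^{-m}$ of \cref{lem:LO} gives failure probability at most $2K^{-\delta}$ ``in the nonvacuous range''. But unwinding the bound (using $\binom{m}{\lfloor m/2\rfloor}2^{-m}\leq 1/\sqrt m$ and $\lceil t\rceil+1\leq t+2$ with $t=K^{1/2-\delta}$, $m\geq K$) gives at most $K^{-\delta}+2K^{-1/2}$, and this is $\leq 2K^{-\delta}$ only when $K^{1/2-\delta}\geq 2$. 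In the nonvacuous range $K^{\delta}>4$, i.e.\ $K>4^{1/\delta}$, this holds precisely when $\delta\leq 1/3$; for $\delta\in(1/3,1/2)$ there are admissible $K$ for which your claimed constant fails. The paper sidesteps this by obtaining the stronger lower bound $|S_{\geq K}|>RN/(2n)$ (a factor $2$ better), which lets it take $q=1/4$ and live comfortably with the looser failure bound $3K^{-\delta}$. The fix in your framework is trivial: define $E''$ by $|\mathcal{B}_K|\geq NR/(2n)$ instead of $NR/(4n)$; the $E'$ analysis then gives $|\mathcal{B}_1|>NR/(2K)$, which with $q=1/4$ still clears $N^+$, and the $E''$ analysis with $q=1/4$ and $p=1-3K^{-\delta}$ gives exactly $1-4K^{-\delta}$.
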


\begin{proof}
We may assume without loss of generality that $N>0$ is an integer (indeed, otherwise we can replace $N$ by $\ceil{N}$, noting that the statement for $\ceil{N}$ implies the statement for $N$).

Let $x_{1},\dots,x_{k+R+1}$ be the entries in the last row of $M_{k+R+1}$.
For subsets $B\subseteq B'\subseteq\{1,\dots,k+R\}$ with sizes $k$ and $k+1$ respectively, we say that $B$ is a \emph{parent} of $B'$ and that $B'$ is a \emph{child} of $B$. For a subset $B'\subseteq\{1,\dots,k+R\}$ of size $k+1$, note that by \cref{fact:per-expansion} we have \begin{equation}\label{eq-expansion-parents}
    \per M_{k+R+1}[\{ R+1,\dots,k+R+1\} ,B']=\sum_B \per M_{k+R}[\{ R+1,\dots,k+R\} ,B]\cdot x_{B'\setminus B},
\end{equation}
where the sum is over all parents $B$ of $B'$ (here, with slight abuse of notation we write $x_{\{i\}}$ instead of $x_i$ for $i\in \{1,\dots,k+R\}$).

For each outcome of $M_{k+R}$ such that $E(k,N,\lambda)$ holds, let us fix subsets $B_{1},\dots,B_{N}$ as in the definition of $E(k,N,\lambda)$. Note that we always have $\vert \per M_{k+R}[\{ R+1,\dots,k+R\} ,B_i]\vert\geq \lambda$ for $i=1,\dots,N$.

Furthermore, for each outcome of $M_{k+R}$ satisfying $E(k,N,\lambda)$, let $S_{q}$ denote the collection of subsets
of $\{ 1,\dots,k+R\} $ of size $k+1$ which have exactly $q$ parents
among the sets $B_{1},\dots,B_{N}$, and let $S=S_{1}\cup\dots\cup S_{n}$ be
the collection of all such subsets which have at least one parent among $B_{1},\dots,B_{N}$. Furthermore, let $S_{\geq K}=S_{\lceil K\rceil }\cup\dots\cup S_{n}$ be the collection of all such subsets which have at least $K$ parents among $B_{1},\dots,B_{N}$. We say that $B'\in S$ is \emph{$\lambda'$}-heavy for some $\lambda'>0$ if $M_{k+R+1}[\{ R+1,\dots,k+R+1\} ,B']$
is $\lambda'$-heavy.

Since each of the sets $B_{1},\dots,B_N$ is a parent of exactly $R$ different sets $B'\in S$,
a double-counting argument shows that we have
\[
\sum_{q=1}^{n}q|S_{q}|= RN
\]
for each outcome of $M_{k+R}$ such that $E(k,N,\lambda)$ holds.

Now, let $E'(k,N,\lambda)\subseteq E(k, N,\lambda)$ be the event that $\sum_{q< K}q|S_{q}|\ge RN/2$, and condition on any outcome $M'$ of $M_{k+R}$ satisfying $E'(k,N,\lambda)$. Note that we then have $|S|\ge \sum_{q< K}|S_{q}|> RN/(2K)$. Furthermore note that for each $B'\in S$, at least one of the terms $\per M_{k+R}[\{ R+1,\dots,k+R\} ,B]$ on the left-hand side of \cref{eq-expansion-parents} has absolute value at least $\lambda$ (since $B'$ has at least one parent among $B_1,\dots,B_N$). Hence, by \cref{fact:non-degenerate-linear}, each $B'\in S$ is $\lambda$-heavy with probability at least $1/2$, and \cref{eq:breed} follows from Markov's inequality (to be precise, it follows from \cref{lem:Markov} applied with $p=1/2$ and $q=1/4$).

On the other hand, let $E''(k,N,\lambda)=E(k, N,\lambda)\setminus E'(k,N,\lambda)$ be the complementary event to $E'(k,N,\lambda)$ within $E(k, N,\lambda)$, i.e.\ the event that $\sum_{q\geq K}q|S_{q}|> RN/2$. Condition on any outcome $M''$ of $M_{k+R}$ satisfying $E''(k,N,\lambda)$, and note that then $|S_{\geq K}|\ge RN/(2n)$. Also note that for each $B'\in S_{\geq K}$, at least $K$ of the terms $\per M_{k+R}[\{ R+1,\dots,k+R\} ,B]$ on the left-hand side of \cref{eq-expansion-parents} have absolute value at least $\lambda$. Hence, by the Erd\H os--Littlewood--Offord inequality (specifically, \cref{lem:LO}, applied with $m=K$, $r=\lambda$ and $t=K^{1/2-\delta}$), each $B'\in S_{\geq K}$ is $\lambda^{+}$-heavy with probability at least $1-3K^{-\delta}$. Then, \cref{eq:grow} follows from Markov's inequality (specifically, we apply \cref{lem:Markov} with $p=1-3K^{-\delta}$ and $q=1/4$).

Finally, to prove \cref{eq:survive}, let us condition on any outcome $M$ of $M_{k+R}$ satisfying $E(k,N,\lambda)$. Say that for $i=1,\dots,N$, the set $B_{i}$ is \emph{good} if at least $R/4$ of its $R$ children
$B'\in S$ are $\lambda$-heavy. We claim that each $B_i$ is good with probability at least $1-e^{-R/8}$. Indeed, consider some fixed $i\in \{1,\dots,N\}$, and condition on any outcome of
the variables $x_{b}$ for $b\in B_{i}$. Now for each child $B'\in S$ of $B_i$, the sum in \cref{eq-expansion-parents} depends only on the outcome of $x_{B'\setminus B_i}$ (since for all other elements of $b\in B'$ the corresponding variable $x_b$ has already been fixed). Since $\vert \per M_{k+R}[\{R+1,\dots,k+R\},B_i]\vert\geq \lambda$, each child $B'\in S$ of $B_i$ is $\lambda$-heavy with probability at least $1/2$, independently for all children $B'\in S$. So, by the Chernoff bound (\cref{lem:Chernoff}), the set $B_{i}$ is indeed good with probability at least $1-e^{-2(R/4)^2/R}=1-e^{-R/8}$, as claimed.

Now, by Markov's inequality (specifically, \cref{lem:Markov}, applied with $p=1-e^{-R/8}$ and $q=1/2$), with probability at least $1-2e^{-R/8}$ at least $N/2$ of the sets $B_1,\dots,B_N$ are good. Whenever this is the case, there are at least $(N/2)\cdot (R/4)/n=RN/(8n)$ different $\lambda$-heavy sets $B'\in S$ (since each such set $B'\in S$ is a child of at most $k+1\leq n$ different sets $B_i$). This proves \cref{eq:survive}.
\end{proof}

Now we deduce \cref{lem:grow-single-minor-weak}.

\begin{proof}[Proof of \cref{lem:grow-single-minor-weak}] As in the lemma statement, let $0<\delta<1/16$ and assume that $n\in \NN$ is sufficiently large with respect to $\delta$ (sufficiently large to satisfy certain inequalities later in the proof). Let $R\in \NN$ be an integer satisfying $\delta n\leq R\leq 2\delta n$, and let $K=n^{1-\delta}$. Furthermore, recall the notation from the statement of \cref{lem:grow-large-minors}. We define random sequences $N_{1},\dots,N_{n-R}$
and $\lambda_{1},\dots,\lambda_{n-R}$ of positive real numbers by an iterative process. Let $N_{1}=\lambda_{1}=1$ and for each $1\leq k\leq n-R-1$ define $N_{k+1}$ and $\lambda_{k+1}$ as follows:
\begin{itemize}
\item[(i)] if $E'(k,N_k,\lambda_{k})$ and $E(k+1,N_{k}^{+},\lambda_{k})$
both hold, then let $N_{k+1}=N_{k}^{+}$ and $\lambda_{k+1}=\lambda_{k}$;
\item[(ii)] if $E''(k,N_k,\lambda_{k})$ and $E(k+1,N_{k}^{-},\lambda_{k}^{+})$
both hold, then let $N_{k+1}=N_{k}^{-}$ and $\lambda_{k+1}=\lambda_{k}^{+}$;
\item[(iii)] if neither (i) nor (ii) holds, but $E(k,N_{k},\lambda_{k})$ and $E(k+1,N_{k}^{-},\lambda_{k})$ both hold, then let $N_{k+1}=N_{k}^{-}$ and $\lambda_{k+1}=\lambda_{k}$;
\item[(iv)] otherwise, abort (and then our sequences are not well-defined).
\end{itemize}

Note that the event $E(1,N_1,\lambda_1)$ always holds. If we do not abort at any point in the above process, then $E(k,N_k,\lambda_k)$ holds for each $k$, and in particular there is a subset $B\subseteq\{1,\dots,n\}$ of size $\vert B\vert=n-R$ such that $M_n[\{R+1,\dots,n\},B]$ is $\lambda_{n-R}$-heavy. Thus, in order for the desired event in \cref{lem:grow-single-minor-weak} to hold, it is sufficient that the process does not abort and that $\lambda_{n-R}\ge n^{(1/2-8\delta)n}$. We will show that this happens with probability at least $1-e^{-\delta^2 n}$.

The main observation is that case (i) cannot occur too many times, simply because it is not possible for $N_k$ to ever be larger than $2^n$. Roughly speaking, it will follow from this observation and \cref{eq:breed} that $E'(k,N_k,\lambda_{k})$ is unlikely to occur too many times. This will then imply that case (ii) is likely to occur many times, meaning that $\lambda_{n-R}$ is large.

\begin{claim}\label{claim:dangerous-case}
Case (i) in the above process occurs for fewer than $\delta n$ different values of $k$.
\end{claim}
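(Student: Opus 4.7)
The plan is to exploit the trivial structural upper bound $N_k \leq \binom{k+R}{k} \leq 2^n$, which must hold whenever the event $E(k, N_k, \lambda_k)$ is non-vacuous: after all, $N_k$ is meant to count subsets of size $k$ inside a ground set of size $k+R \leq n$, and by construction the process forces $E(k, N_k, \lambda_k)$ to hold at every non-aborted step. This structural cap, combined with the observation that each occurrence of case (i) multiplies $N_k$ by the factor $N_k^+/N_k = R/(8K) \geq (\delta/8)\,n^{\delta}$ (using $R\geq \delta n$ and $K=n^{1-\delta}$), while each case-(ii) or case-(iii) step multiplies $N_k$ by the factor $R/(8n)\in[\delta/8,\,\delta/4]$, will prevent case (i) from occurring too often.

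I would argue by contradiction: suppose that case (i) occurs at least $\delta n$ times, and let $k^*$ be the step at which it occurs for the $\lceil \delta n\rceil$-th time. Writing $t=\lceil \delta n\rceil$ and letting $s$ be the number of case-(ii) or case-(iii) steps performed by the end of step $k^*$, every one of the first $k^*$ steps falls into case (i), (ii), or (iii), so $t+s=k^* \leq n$. Since case (i) at step $k^*$ forces the event $E(k^*+1, N_{k^*+1}, \lambda_{k^*+1})$ to hold, the structural bound yields $N_{k^*+1} \leq 2^n$. On the other hand, starting from $N_1=1$ and iterating the multiplicative description of the process gives
$$\log N_{k^*+1} \;=\; t \log\!\left(\frac{R}{8K}\right) + s\log\!\left(\frac{R}{8n}\right) \;\geq\; t\bigl(\delta \log n + \log(\delta/8)\bigr) + s\log(\delta/8) \;=\; t\,\delta \log n - (t+s)\log(8/\delta).$$
Plugging in $t \geq \delta n$ and $t+s \leq n$ yields $\log N_{k^*+1} \geq \delta^2 n \log n - n\log(8/\delta)$, which exceeds $n\log 2$ once $n$ is large with respect to $\delta$. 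This contradicts $N_{k^*+1}\leq 2^n$.

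The argument is essentially just bookkeeping: the only conceptual ingredient is the structural cap $N_k\leq 2^n$, and once that is noted, the contradiction drops out of comparing a geometric growth rate of $\sim n^{\delta}$ per case-(i) step against a $2^n$ ceiling. I do not anticipate any real obstacle in executing this plan; the choice of parameters $K=n^{1-\delta}$ in the preceding iterative setup is precisely what makes the gain per case-(i) step be a (large) power of $n$ rather than of $2$, which is why fewer than $\delta n$ such steps can occur.
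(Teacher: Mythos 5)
Your proof is correct and takes essentially the same approach as the paper's: both arguments hinge on the structural ceiling $N_k \leq 2^n$ (since $N_k$ counts subsets of $\{1,\dots,n\}$) and the observation that each case-(i) step inflates $N_k$ by a factor $R/(8K) \geq (\delta/8)n^{\delta}$, so $\delta n$ occurrences of case (i) would push $N_k$ past $2^n$. The only cosmetic difference is that you stop at the $\lceil\delta n\rceil$-th occurrence and work with logarithms, while the paper stops at the last occurrence and writes the product bound directly.
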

\begin{proof}
Note that whenever (i) holds, we have $N_{k+1}/N_k=N_k^+/N_k=R/(8K)\geq \delta n/8n^{1-\delta}=n^\delta/8$. On the other hand, whenever (ii) or (iii) holds, we have $N_{k+1}/N_k=N_k^-/N_k=R/(8n)\geq \delta n/8n=\delta/8$. Now suppose for the purpose of contradiction that (i) holds for at least $\delta n$ different $k$, and let us define $m=k+1$ for the last such value $k$. Note that then  we have
\[N_{m}\geq (n^\delta/8)^{\delta n}\cdot (\delta/8)^{m-\delta n}\geq n^{\delta^2 n}\cdot (\delta/8)^m\geq n^{\delta^2 n}\cdot (\delta/8)^n>2^n\]
for sufficiently large $n$. On the other hand, by our choice of $m$, case (i) holds for $k=m-1$, and so in particular the event $E(k+1,N_k^+,\lambda_k)=E(m,N_m,\lambda_m)$ holds. This means that there are at least $N_m>2^n$ different subsets $B\subseteq \{1,\dots,m+R\}\subseteq \{1,\dots,n\}$ satisfying the conditions in the definition of the event $E(m,N_m,\lambda_m)$. But this is clearly a contradiction, since the total number of subsets of $\{1,\dots,n\}$ is only $2^n$.
\end{proof}

The next observation is that if case (ii) occurs many times, then we are done.

\begin{claim}
If we do not abort at any point, and case (ii) occurs for at least $n-12\delta n$ different values of $k$, then $\lambda_{n-R}\ge n^{(1/2-8\delta)n}$.
\end{claim}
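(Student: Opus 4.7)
The plan is a direct computation tracking how $\lambda_k$ evolves across the process. Observe that in cases (i) and (iii) the value of $\lambda_k$ does not change (by definition $\lambda_{k+1}=\lambda_k$), while in case (ii) we multiply by the factor $K^{1/2-\delta}$ (since $\lambda_{k+1}=\lambda_k^+=K^{1/2-\delta}\lambda_k$). Starting from $\lambda_1=1$, if case (ii) occurs for at least $n-12\delta n$ values of $k\in\{1,\dots,n-R-1\}$, then
\[
\lambda_{n-R}\ \geq\ \bigl(K^{1/2-\delta}\bigr)^{n-12\delta n}\ =\ K^{(1/2-\delta)(1-12\delta)n}.
\]

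Next I would substitute $K=n^{1-\delta}$ to obtain $\lambda_{n-R}\geq n^{\alpha n}$ where $\alpha=(1-\delta)(1/2-\delta)(1-12\delta)$. Expanding,
\[
(1-\delta)(1/2-\delta)\ =\ 1/2-3\delta/2+\delta^2,
\]
and multiplying by $(1-12\delta)$ gives
\[
\alpha\ =\ 1/2-15\delta/2+19\delta^2-12\delta^3.
\]
Since $0<\delta<1/16$, we have $19\delta^2-12\delta^3>0$, so $\alpha\geq 1/2-15\delta/2\geq 1/2-8\delta$. Therefore $\lambda_{n-R}\geq n^{(1/2-8\delta)n}$, as claimed.

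No part of this is really an obstacle: the statement is essentially a bookkeeping exercise designed to match the parameters chosen earlier (the factor $K^{1/2-\delta}$ of \cref{eq:grow} and the value $K=n^{1-\delta}$). The only thing to verify is that the exponent arithmetic works out with the margin $1/2-8\delta$ in the lemma's conclusion, which it does provided $\delta<1/16$ (the hypothesis of \cref{lem:grow-single-minor-weak}). The real work in the overall proof is then to show that case (ii) indeed occurs at least $n-12\delta n$ times with high probability; that will be handled in the subsequent claim by combining \cref{claim:dangerous-case} (which bounds the number of times case (i) occurs) with a concentration argument based on \cref{eq:breed}.
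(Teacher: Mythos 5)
Your proof is correct and takes essentially the same approach as the paper: both observe that $\lambda_{k+1}=\lambda_k$ in cases (i) and (iii) and $\lambda_{k+1}=K^{1/2-\delta}\lambda_k$ in case (ii), then bound $\lambda_{n-R}\geq (K^{1/2-\delta})^{n-12\delta n}$ and work out the exponent. The paper uses quicker intermediate bounds ($(1-\delta)(1/2-\delta)\geq 1/2-2\delta$, then $(1/2-2\delta)(1-12\delta)\geq 1/2-8\delta$) while you expand the full polynomial in $\delta$; both are fine.
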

\begin{proof}
Whenever (ii) holds, we have $\lambda_{k+1}/\lambda_k=\lambda_k^+/\lambda_k=K^{1/2-\delta}$. On the other hand, whenever (i) or (iii) holds, we have $\lambda_{k+1}=\lambda_k$. So, if case (ii) occurs for at least $n-12\delta n$ values of $k$, then
\[\lambda_{n-R}\geq (K^{1/2-\delta})^{n-12\delta n} \geq n^{(1-\delta)\cdot (1/2-\delta)\cdot (n-12\delta n)}\geq n^{(1/2-2\delta)\cdot (n-12\delta n)}\geq n^{(1/2-8\delta)n}.\tag*{\qedhere}\]
\end{proof}

It now suffices to show that with probability at least $1-e^{-\delta^2 n}$, we do not abort and case (ii) occurs at least $n-12\delta n$ times. To this end, we define an auxiliary random process $W_1,\dots,W_{n-R}$ that evolves in parallel with $N_{1},\dots,N_{n-R}$ and $\lambda_{1},\dots,\lambda_{n-R}$. Namely, let $W_1=0$, and for $1\le k\le n-R-1$ let
\[W_{k+1}=W_k+(1-\delta)-\begin{cases}
3&\text{in case (i),}\\
1&\text{in case (ii),}\\
0&\text{in case (iii) or (iv).}
\end{cases}\]
Furthermore, if case (iv) occurs then let $W_{k+2}=W_{k+3}=\dots=W_{n-R}$ all be equal to the value of $W_{k+1}$ just defined (that is to say, we ``freeze'' the value of $W_k$ after the process aborts).

Note that $W_1,\dots,W_k$ are fully determined by the random matrix $M_{k+R}$ (which also determines its submatrices $M_{R+1},\dots,M_{k+R}$). Moreover, this defines a supermartingale, in the sense that $\E[W_{k+1}|M_{k+R}]\le W_k$ for each $k$ (provided $n$ is sufficiently large). To see this, consider any outcome of $M_k$ for which we have not yet aborted (meaning in particular that the event $E(k,N_k,\lambda_k)=E'(k,N_k,\lambda_k)\cup E''(k,N_k,\lambda_k)$ holds). If $E'(k,N_k,\lambda_k)$ holds, then $\E[W_{k+1}-W_{k}|M_k]\le  (1-\delta)-(1/3)\cdot 3\le-\delta$ by \cref{eq:breed}. On the other hand, if $E''(k,N_k,\lambda_k)$ holds, then $\E[W_{k+1}-W_{k}|M_k]\le (1-\delta)-(1-4K^{-\delta})=-\delta+4K^{-\delta}\le 0$ for sufficiently large $n$, by \cref{eq:grow}. In addition, observe that $|W_i-W_{i-1}|\le 3$ for each $1<i\le n-R$.

By \cref{lem:AH2} (with $Z_k=M_{k+R}$ for $k=1,\dots,n-R$, and $c=3$) we have $W_{n-R}\le 5\delta n$ with probability at least $1-e^{-(25/18)\delta^2 n}\ge 1-(1/2)e^{-\delta^2 n}$. Also, by \cref{eq:survive} and the union bound, the probability that we ever abort is bounded by $(n-R)\cdot 2e^{-R/8}\leq n\cdot 2e^{-\delta n/8}\leq (1/2)e^{-\delta^2 n}$. But note that if we never abort, then
\[W_{n-R}=(n-R-1)(1-\delta)-3X_{\text{(i)}}-X_{\text{(ii)}},\]
where $X_{\text{(i)}}$ is the number of times that case (i) occurs, and $X_{\text{(ii)}}$ is the number of times that case (ii) occurs. Recall that $X_{\text{(i)}}\le \delta n$ by \cref{claim:dangerous-case}. Hence, if $W_{n-R}\le 5\delta n$ and the process does not abort, then case (ii) occurs $X_{\text{(ii)}}\ge (n-R-1)(1-\delta)-3\delta n-5\delta n\ge n-12\delta n$ times, which by \cref{claim:dangerous-case} implies that $\lambda_{n-R}\geq n^{(1/2-8\delta)n}$. Thus, we have indeed shown that with probability at least $1-e^{-\delta^2 n}$ the process does not abort and we have $\lambda_{n-R}\geq n^{(1/2-8\delta)n}$.
\end{proof}

\subsection{Proof of \texorpdfstring{\cref{lem:grow-single-minor-end}: ``filling out''}{Lemma~\ref{lem:grow-single-minor-end}: "filling out"} a single heavy submatrix}\label{sec:grow-single-minor-end}

In this subsection we prove \cref{lem:grow-single-minor-end}. It will be a consequence
of the following two lemmas, which (in two slightly different ways) ``grow'' a heavy submatrix by exposing a few additional rows and columns.

\begin{lem}
\label{lem:iterative-cover}Let $1\leq S<n$ and $\lambda>0$, and condition on an outcome of $M_{n}$ for which there is a subset $B\subseteq \{1,\dots,n\}$ of size $n-S$ such that $M_{n}[\{ S+1,\dots,n\} ,B]$ is $\lambda$-heavy.
Then with probability at least $1-3S\cdot 2^{-S}-e^{-S/6}$, there is a set $B'$ of size $n+2S$ with $\{ 1,\dots,n\}\subseteq B'\subseteq \{ 1,\dots,n+3S\}$ such that the matrix $M_{n+3S}[\{ S+1,\dots,n+3S\} ,B']$ is $\lambda$-heavy.
\end{lem}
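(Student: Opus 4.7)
The plan is to iteratively apply \cref{lem:simple-augment} for $3S$ steps, at step $k=1,\dots,3S$ going from $M_{n+k-1}$ to $M_{n+k}$. Starting with $A_{0}=\{S+1,\dots,n\}$ and $B_{0}=B$, I maintain $A_{k}=\{S+1,\dots,n+k\}$ and $B_{k}\su\{1,\dots,n+k-1\}$ with $\vert A_{k}\vert=\vert B_{k}\vert=n-S+k$, such that $M_{n+k}[A_{k},B_{k}]$ is $\lambda$-heavy. At step $k$, I invoke \cref{lem:simple-augment} with $I_{k}=\{1,\dots,n+k-1\}\setminus B_{k-1}$; a key observation is that $\vert I_{k}\vert=(n+k-1)-(n-S+k-1)=S$ at every step, regardless of history.

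Inspecting the proof of \cref{lem:simple-augment}, after conditioning on $M_{n+k-1}$ and on the entries $x_{b}$ for $b\in B_{k-1}$, the events ``$i$ works at step $k$'' (meaning that $M_{n+k}[A_{k-1}\cup\{n+k\},B_{k-1}\cup\{i\}]$ is $\lambda$-heavy) are \emph{independent} across $i\in I_{k}$, each with probability at least $1/2$. I would use a priority strategy: if some ``old missing'' column $i\in I_{k}\cap(\{1,\dots,n\}\setminus B)$ works, set $B_{k}=B_{k-1}\cup\{i\}$ for the smallest such $i$; otherwise, if any $i\in I_{k}$ works, pick one; otherwise declare the step to have failed. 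A given step fails with probability at most $2^{-S}$, so by a union bound the total failure probability across the $3S$ steps is at most $3S\cdot 2^{-S}$.

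Conditional on every step succeeding, I still need all $S$ old missing columns $\{1,\dots,n\}\setminus B$ to lie in $B_{3S}$, which is exactly the requirement $\{1,\dots,n\}\su B'$. Let $s_{k}$ be the number of old missing columns not yet in $B_{k-1}$, so $s_{1}=S$. Whenever $s_{k}\geq 1$, the smallest-index remaining old missing column lies in $I_{k}$ and works with probability at least $1/2$ conditional on history, and in that case the priority strategy adds some old missing column, forcing $s_{k+1}=s_{k}-1$. A standard coupling with auxiliary randomness then produces i.i.d.\ $\xi_{1},\dots,\xi_{3S}\sim\Ber(1/2)$ such that $\xi_{k}=1$ together with $s_{k}\geq 1$ implies $s_{k+1}=s_{k}-1$, whence $\sum_{k=1}^{3S}\xi_{k}\geq S$ guarantees $s_{3S+1}=0$. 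By \cref{lem:Chernoff} applied with $t=S/2$, the event $\sum_{k=1}^{3S}\xi_{k}<S$ has probability less than $e^{-S/6}$.

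Combining the two failure bounds, with probability at least $1-3S\cdot 2^{-S}-e^{-S/6}$ the whole procedure succeeds, yielding $B'=B_{3S}$ of size $n+2S$ contained in $\{1,\dots,n+3S\}$, containing $\{1,\dots,n\}$, and with $M_{n+3S}[\{S+1,\dots,n+3S\},B']$ being $\lambda$-heavy, as required. No serious conceptual obstacle arises; the only point requiring a little care is making the stochastic-dominance coupling precise, which is routine given the explicit independence statement recorded in the proof of \cref{lem:simple-augment}.
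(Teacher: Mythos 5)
Your proposal is correct and follows essentially the same approach as the paper: iterate \cref{lem:simple-augment} over the $3S$ steps, union-bound the per-step failure probability $2^{-S}$, and use a Chernoff bound to show that at least $S$ of the steps make progress toward covering $\{1,\dots,n\}$. The only cosmetic difference is that the paper tracks a potential function $Q_m$ (the minimum number of original columns still missing, over all valid column sets), whereas you track a single $B_k$ chosen by a priority rule; the two bookkeeping schemes are interchangeable.
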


\begin{lem}
\label{lem:iterative-growth}Let $2\leq T<S<n$ and $\lambda>0$, and condition on an outcome of $M_{n}$ for which there is a set $B$ of size $n-S$ with $\{1,\dots,S\}\subseteq B\subseteq \{1,\dots,n\}$ such that $M_{n}[\{ S+1,\dots,n\} ,B]$ is $\lambda$-heavy. Then with probability at least $1-5S\cdot 2^{-T}-e^{-S/40}$, there is a set $B'$ of size $n+5S-T$ with $\{1,\dots,T\}\subseteq B\subseteq \{1,\dots,n+5S\}$ such that $M_{n+5S}[\{ T+1,\dots,n+5S\} ,B']$ is $\lambda/2^{S-T}$-heavy.
\end{lem}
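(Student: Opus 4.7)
The plan is to build the desired submatrix iteratively by adding one row and column to the matrix at each of $5S$ stages, going from $M_{n}$ up through $M_{n+5S}$. At stage $k$ I maintain a heavy submatrix $M_{k}[A_{k},B_{k}]$, starting with $A_{n}=\{S+1,\dots,n\}$ and $B_{n}=B$. Two types of single-stage extension are available: a \emph{simple} step, via \cref{lem:simple-augment}, which grows each of $A_{k}$ and $B_{k}$ by one index, preserves the heaviness parameter, and succeeds with probability at least $1-2^{-|I|}$ for a chosen set $I$ of missing columns; and a \emph{corner} step, via \cref{lem:corner-might-work}, which grows both by two, shrinks the numbers of missing rows and missing columns each by one, halves the heaviness parameter, and succeeds with probability at least $1/4$. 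A direct bookkeeping check shows that to land at the required configuration---dimensions $(n+5S-T)\times(n+5S-T)$, row set $\{T+1,\dots,n+5S\}$, and column set $B'$ of size $n+5S-T$ containing $\{1,\dots,T\}$---one needs \emph{precisely} $S-T$ successful corner steps (each absorbing a distinct element of $\{T+1,\dots,S\}$ as its parameter $a$) together with $4S+T$ successful simple steps. The $S-T$ halvings of the heaviness leave exactly $\lambda/2^{S-T}$ at the end.

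A few state invariants need to hold throughout so that both types of extension can always be attempted as required: $A_{k}\cap\{1,\dots,T\}=\emptyset$, $\{1,\dots,T\}\subseteq B_{k}$, the number of missing columns is at least $T$, and (as long as we still need more corner successes) there is an unabsorbed element of $\{T+1,\dots,S\}$ available as $a\in B_{k}\setminus A_{k}$ together with two missing columns $b_{1},b_{2}\in A_{k}\setminus B_{k}$. These follow straightforwardly from the effect of each step type on $(A_{k},B_{k})$: a corner step's parameters all lie in $\{T+1,\dots,k\}$, and a simple step only adds the new index $k+1$ to $A_{k}$ and some missing column to $B_{k}$.

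The execution plan at each stage $k$ is: if fewer than $S-T$ corner steps have succeeded so far, first attempt a corner step, and if it fails, fall back to a simple step with $|I|=T$; once $S-T$ corner successes have accumulated, perform only simple steps at all remaining stages. The construction succeeds unless one of two bad events occurs: (a) fewer than $S-T$ corner attempts succeed over the (at most $5S$) attempts, or (b) some simple attempt fails. For (a), each corner attempt has conditional success probability at least $1/4$ by \cref{lem:corner-might-work}, so the number of corner successes in $5S$ possible attempts stochastically dominates $\mathrm{Bin}(5S,1/4)$, and applying \cref{lem:Chernoff} with deviation $S/4+T$ gives
\[
\Pr((a))\le\exp\!\left(-\frac{2(S/4+T)^{2}}{5S}\right)\le e^{-S/40}.
\]
For (b), there are at most $5S$ simple attempts and each fails with probability at most $2^{-T}$ by \cref{lem:simple-augment}, so a union bound gives $\Pr((b))\le 5S\cdot 2^{-T}$. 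Summing yields the advertised failure probability $5S\cdot 2^{-T}+e^{-S/40}$.

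The main obstacle is the weak $1/4$ success probability of the corner step; this is precisely why the lemma allots $5S$ matrix extensions even though only $S-T$ of them ``need'' to be corner steps. The surplus provides a comfortable margin $5S/4-(S-T)=S/4+T$ between expected and required corner successes, and it is this surplus that gives the Chernoff tail of $e^{-S/40}$ in the lemma's bound. Some care is needed with the stochastic-domination argument (since the conditional success probabilities depend on the evolving state), but the $1/4$ lower bound holds uniformly whenever the invariants above are preserved, so a coupling with an i.i.d.\ $\mathrm{Ber}(1/4)$ sequence reduces everything to the standard Chernoff bound.
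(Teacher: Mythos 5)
Your argument is essentially the same as the paper's, just phrased with an explicit chain of submatrices $(A_k,B_k)$ rather than the paper's auxiliary random variable $Q_m$ (which tracks the minimum ``deficit'' over all valid row/column configurations at stage $m$). The key ingredients are identical: \cref{lem:corner-might-work} gives a $\geq 1/4$ chance of shrinking the deficit by one while halving the heaviness, \cref{lem:simple-augment} with $|I|=T$ gives a $\geq 1-2^{-T}$ chance of not increasing the deficit, a Chernoff bound yields enough ``progress'' steps (the paper asks for $\geq S$ progress steps with deviation $S/4$; you ask for $\geq S-T$ corner successes with deviation $S/4+T$; both give $e^{-S/40}$), and a union bound handles the $2^{-T}$ failure events. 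Your bookkeeping of exactly $S-T$ corner steps absorbing $\{T+1,\dots,S\}$ and $4S+T$ simple steps is correct.

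The one place the paper's formulation is cleaner is the issue you flag at the end: because $Q_m$ is a deterministic function of $M_m$ and the ``progress/failure'' dichotomy is defined in terms of $Q_m$ alone, the stochastic-domination coupling is automatic. In your version, the explicit process can ``break'' when both the corner and the fallback simple extensions fail, so you need to specify what the process does afterwards (e.g.\ freeze the state, as the paper does in the proof of \cref{lem:grow-single-minor-weak}) so that the coupling with i.i.d.\ $\Ber(1/4)$ variables is defined at all $5S$ stages, and so that your events (a) and (b) genuinely cover every failure mode. With that modification your argument goes through.
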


Before proving \cref{lem:iterative-cover,lem:iterative-growth}, we deduce \cref{lem:grow-single-minor-end}.

\begin{proof}[Proof of \cref{lem:grow-single-minor-end}]
Recall that $n'=n-8R-5L^2-3L$, and that we are conditioning on an outcome of $M_{n'}$ for which there is a subset $B\subseteq\{ 1,\dots,n'\} $ of size $n'-R$ such that $M_{n'}[\{ R+1,\dots,n'\} ,B]$ is $\lambda$-heavy.

First, by \cref{lem:iterative-cover} (applied with $S=R<n'$), with probability at least $1-3R\cdot 2^{-R}-e^{-R/6}$, there is a set $B_1$ of size $n'+2R$ with $\{ 1,\dots,R\}\subseteq \{ 1,\dots,n'\}\subseteq B'\subseteq \{ 1,\dots,n'+3R\}$ such that $M_{n'+3R}[\{ R+1,\dots,n'+3R\} ,B_1]$ is $\lambda$-heavy. Let us now condition on such an outcome for $M_{n'+3R}$.

Then, by \cref{lem:iterative-growth} (applied with $T=L^2$ and $S=R<n'+3R$), we obtain that with probability at least $1-5R\cdot 2^{-L^2}-e^{-R/40}$
there is a set $B_2$ of size $n'+8R-L^2$ with $\{1,\dots,L^2\}\subseteq B_2\subseteq \{1,\dots,n'+8R\}$ such that
that the matrix $M_{n'+8R}[\{ L^2+1,\dots,n'+8R\} ,B_2]$ is $(\lambda/2^{R-L^2})$-heavy. Let us condition on such an outcome for $M_{n'+8R}$.

Applying \cref{lem:iterative-growth} again (this time with $T=L$ and $S=L^2<n'+8R$), we now get that with probability at least $1-5L^2\cdot 2^{-L}-e^{-L^2/40}$, there is a set $B_3$ of size $n'+8R+5L^2-L$ with $\{1,\dots,L\}\subseteq B_3\subseteq \{1,\dots,n'+8R+5L^2\}$ such that
that $M_{n'+8R+5L^2}[\{ L+1,\dots,n'+8R+5L^2\} ,B_3]$ is $\lambda'$-heavy, where $\lambda'=(\lambda/2^{R-L^2})/2^{L^2-L}=\lambda/2^{R-L}$. Let us condition on such an outcome for $M_{n'+8R+5L^2}$

Finally, by \cref{lem:iterative-cover} (applied with $S=L<n'+8R+5L^2$), with probability at least $1-3L\cdot 2^{-L}-e^{-L/6}$, there is a set $B'$ of size $n'+8R+5L^2+2L=n-L$ with
\[\{ 1,\dots,n-3L\}=\{ 1,\dots,n'+8R+5L^2\}\subseteq B'\subseteq \{ 1,\dots,n'+8R+5L^2+3L\}=\{ 1,\dots,n\}\]
such that $M_{n}[\{ L+1,\dots,n\} ,B']$ is $\lambda'$-heavy.

The probability that all four steps succeed is at least
\[1-(3R\cdot 2^{-R}+e^{-R/6})-(5R\cdot 2^{-L^2}+e^{-R/40})-(5L^2\cdot 2^{-L}+e^{-L^2/40})-(3L\cdot 2^{-L}+e^{-L/6})\ge 1-(1/8)\cdot n^{-c}\]
for some small constant $c>0$ (recall that $(\log n)/20<L<L^2<R<n$ and that $n$ is sufficiently large).
\end{proof}

We now prove \cref{lem:iterative-cover}.

\begin{proof}[Proof of \cref{lem:iterative-cover}]For any $m\in \NN$ with $n\leq m\leq n+3S$, define the random variable $Q_{m}$ to be the minimum value of $|\{ 1,\dots,n\} \setminus B'|$ among all subsets $B'\subseteq\{1,\dots,m\}$ of size $m-S$ such that $M_{m}[\{ S+1,\dots,m\} ,B']$
is $\lambda$-heavy. If no such subset $B'$ exists, let $Q_{m}=\infty$.

Recall that in \cref{lem:iterative-cover} we are conditioning on an outcome of $M_n$ for which there is a subset $B\subseteq \{1,\dots,n\}$ of size $n-S$ such that $M_{n}[\{ S+1,\dots,n\} ,B]$ is $\lambda$-heavy. This means that $Q_n\leq |\{ 1,\dots,n\} \setminus B|= S$.

For $n<m\le n+3S$, we say that step $m$ is a \emph{failure} if $Q_{m}>Q_{m-1}$. We say that step $m$ is \emph{progress} if $Q_{m}<Q_{m-1}$ or if $Q_{m}=Q_{m-1}=0$
or if $Q_{m-1}=\infty$.

For any $n<m\le n+3S$, when conditioning on any outcome of $M_{m-1}$, we claim that step $m$ is a failure with probability at most $2^{-S}$. Indeed, if $Q_{m-1}<\infty$, let $B'\subseteq\{1,\dots,m-1\}$ be a subset of size $m-1-S$ with $|\{ 1,\dots,n\} \setminus B'|=Q_{m-1}$ such that $M_{m-1}[\{ S+1,\dots,m-1\} ,B']$ is $\lambda$-heavy. By applying \cref{lem:simple-augment} with $I=\{ 1,\dots,m-1\} \setminus B'$, we see that with probability at least $1-2^{-S}$ there exists some $i\in \{ 1,\dots,m-1\} \setminus B'$ such that $M_{m}[\{ S+1,\dots,m\}, B'\cup\{i\}]$ is $\lambda$-heavy (which in particular implies $Q_m\leq |\{ 1,\dots,n\} \setminus (B'\cup\{i\})|\leq Q_{m-1}$). On the other hand, if $Q_{m-1}=\infty$, then step $m$ cannot be a failure. So this indeed shows that in any case (when conditioning on any outcome of $M_{m-1}$), step $m$ is a failure with probability at most $2^{-S}$.

Furthermore, for any $n<m\le n+3S$, when conditioning on any outcome of $M_{m-1}$, we claim that step $m$ is progress with probability at least $1/2$. Indeed, if $Q_{m-1}\notin\{ 0,\infty\}$, let $B'\subseteq\{1,\dots,m-1\}$ be a subset of size $m-1-S$ with $|\{ 1,\dots,n\} \setminus B'|=Q_{m-1}$ such that $M_{m-1}[\{ S+1,\dots,m-1\} ,B']$ is $\lambda$-heavy. We can then apply \cref{lem:simple-augment} with $I=\{ 1,\dots,n\} \setminus B'$, to see that with probability at least $1/2$ there exists some $i\in \{ 1,\dots,n\} \setminus B'$ such that $M_{m}[\{ S+1,\dots,m\}, B'\cup\{i\}]$ is $\lambda$-heavy (which in particular implies $Q_m\leq |\{ 1,\dots,n\} \setminus (B'\cup\{i\})|<Q_{m-1}$). If $Q_{m-1}=\infty$, by definition step $m$ is always progress. If $Q_{m-1}=0$, then step $m$ is progress if and only if it is not failure, and we already showed that it is failure with probability at most $2^{-S}\leq 1/2$. This shows that in any case (when conditioning on any outcome of $M_{m-1}$), step $m$ is progress with probability at least $1/2$.

Hence the number of progress steps among the $3S$ steps $m\in \{n+1,\dots,n+3S\}$ stochastically dominates a sum of $3S$ independent $\Ber(1/2)$ random variables. By the Chernoff bound (\cref{lem:Chernoff}) such a sum is at least $S$ with probability at least $1-e^{-2(S/2)^2/(3S)}=1-e^{-S/6}$. Thus, the number of progress steps is also at least $S$ with probability at least $1-e^{-S/6}$. Furthermore, note that by the union bound, with probability at least $1-3S\cdot 2^{-S}$ none of the $3S$ steps $m\in \{n+1,\dots,n+3S\}$ is a failure.

If there are no failures and at least $S$ progress steps, then we must have $Q_{n+3S}=0$. Hence, with probability at least $1-3S\cdot 2^{-S}-e^{-S/6}$ there exists a subset $B'\subseteq\{1,\dots,n+3S\}$ of size $n+2S$ such that $M_{n+3S}[\{ S+1,\dots,n+3S\} ,B']$ is $\lambda$-heavy and $|\{ 1,\dots,n\} \setminus B'|=0$ (meaning that $\{ 1,\dots,n\}\su B'$).
\end{proof}

The proof of \cref{lem:iterative-growth} follows a similar strategy as the proof of \cref{lem:iterative-cover} above. 

\begin{proof}[Proof of \cref{lem:iterative-growth}]
For any $m\in \NN$ with $n\leq m\leq n+5S$, define the random variable $Q_{m}$ to be the minimal number $Q\in \{T, T+1, T+2,\dots\}$ such that there is a set $B'$ of size $m-Q$ with $\{ 1,\dots,Q\}\su B'\su \{1,\dots,m\}$ such that the matrix $M_{m}[\{ Q+1,\dots,m\} ,B']$ is $(\lambda/2^{S-Q})$-heavy. If there is no $Q\in \{T, T+1, T+2,\dots\}$ for which such a set $B'$ exists, we define $Q_m=\infty$.

Recall that in \cref{lem:iterative-growth}, we are conditioning on an outcome of $M_{n}$ for which there is a set $B$ of size $n-S$ with $\{1,\dots,S\}\subseteq B\subseteq \{1,\dots,n\}$ such that
that $M_{n}[\{ S+1,\dots,n\} ,B]$ is $\lambda$-heavy. This means that $Q_n\leq S$ (recall that $S>T$).

For $n<m\le n+5S$, we say that step $m$ is a \emph{failure} if $Q_{m}>Q_{m-1}$. We say that step $m$ is \emph{progress} if $Q_{m}<Q_{m-1}$ or if $Q_{m}=Q_{m-1}=T$ or if $Q_{m-1}=\infty$.

For any $n<m\leq n+5S$, when conditioning on any outcome of $M_{m-1}$, we claim that step $m$ is a failure with probability at most $2^{-T}$. Indeed, if $Q_{m-1}<\infty$, let $B'$ be a set of size $m-1-Q_{m-1}$ with $\{ 1,\dots,Q_{m-1}\}\su B'\su \{1,\dots,m-1\}$ such that the matrix $M_{m-1}[\{ Q_{m-1}+1,\dots,m-1\} ,B']$ is $(\lambda/2^{S-Q_{m-1}})$-heavy. By applying \cref{lem:simple-augment} with $I=\{ 1,\dots,m-1\} \setminus B'$, we obtain that with probability at least $1-2^{-Q_{m-1}}\geq 1-2^{-T}$ there exists some $i\in \{ 1,\dots,m-1\} \setminus B'$ such that the matrix $M_{m}[\{ Q_{m-1}+1,\dots,m\} ,B'\cup \{i\}]$ is $(\lambda/2^{S-Q_{m-1}})$-heavy (which in particular implies that $Q_m\leq Q_{m-1}$). On the other hand, if $Q_{m-1}=\infty$, then step $m$ cannot be a failure.

We furthermore claim that for any $n<m\leq n+5S$, when conditioning on any outcome of $M_{m-1}$, step $m$ is progress with probability at least $1/4$. First assume that $Q_{m-1}\not\in \{T,\infty\}$, and let $B'$ be a set of size $m-1-Q_{m-1}$ with $\{ 1,\dots,Q_{m-1}\}\su B'\su \{1,\dots,m-1\}$ such that $M_{m-1}[\{ Q_{m-1}+1,\dots,m-1\} ,B']$ is $(\lambda/2^{S-Q_{m-1}})$-heavy. We can then apply \cref{lem:corner-might-work} to the sets $\{ Q_{m-1}+1,\dots,m-1\}$ and $B'$. Since $\{ 1,\dots,Q_{m-1}\}\su B'$ and $\vert B'\vert =m-1-Q_{m-1}$, we have $\vert \{ Q_{m-1}+1,\dots,m-1\}\setminus B'\vert=Q_{m-1}\geq T\geq 2$. So we can find two distinct elements $b_1,b_2\in \{ Q_{m-1}+1,\dots,m-1\}\setminus B'$. Let us furthermore take $a=Q_{m-1}\in B'\setminus \{ Q_{m-1}+1,\dots,m-1\}$. By \cref{lem:corner-might-work}, there exist distinct elements $i,j\in \{a,b_1,b_2\}$ such that the set $B^*=(B'\setminus\{a\})\cup \{i,j,m\}$ has the property that the matrix $M_m[\{ Q_{m-1},\dots,m\} ,B^*]$ is $(\lambda/2^{S-Q_{m-1}+1})$-heavy with probability at least $1/4$. Also note that $\vert B^*\vert=m-(Q_m-1)$ and $\{ 1,\dots,Q_{m-1}-1\}\su B^*\su \{1,\dots,m\}$. So we can conclude that with probability at least $1/4$ we have $Q_m\leq Q_{m-1}-1$, meaning that step $m$ is progress.

In order to finish proving the claim that for any $n<m\leq n+5S$ (when conditioning on any outcome of $M_{m-1}$), step $m$ is progress with probability at least $1/4$, it only remains to consider the cases $Q_{m-1}=\infty$ and $Q_{m-1}=T$. If $Q_{m-1}=\infty$, then step $m$ is always progress. If $Q_{m-1}=T$, then step $m$ is progress if and only if it is not failure, and we already proved that step $m$ is failure with probability at most $2^{-T}\leq 1/4\leq 3/4$.

Having proved this claim, we can now conclude that the number of progress steps among the $5S$ steps $m\in \{n+1,\dots,n+5S\}$ stochastically dominates a sum of $5S$ independent $\Ber(1/4)$ random variables. By the Chernoff bound (\cref{lem:Chernoff}) such a sum is at least $S$ with probability at least $1-e^{-2(S/4)^2/(5S)}=1-e^{-S/40}$. Thus, the number of progress steps is also at least $S$ with probability at least $1-e^{-S/40}$. Furthermore, note that by the union bound, with probability at least $1-5S\cdot 2^{-T}$ none of the $5S$ steps $m\in \{n+1,\dots,n+5S\}$ is a failure.

If there are no failures and at least $S\geq S-T$ progress steps, then we must have $Q_{n+5S}=T$. Hence, with probability at least $1-5S\cdot 2^{-T}-e^{-S/40}$ there is a set $B'$ of size $n+5S-T$ with $\{ 1,\dots,T\}\su B'\su \{1,\dots,n+5S\}$ such that the matrix $M_{n+5S}[\{ T+1,\dots,n+5S\} ,B']$ is $(\lambda/2^{S-T})$-heavy.
\end{proof}

\section{Proof of \texorpdfstring{\cref{lem:endgame-step}}{Lemma~\ref{lem:endgame-step}}: survival of heavy submatrices}\label{sec:endgame-step}

In this section we prove \cref{lem:endgame-step}. Recall that we call subsets $S_1,\dots,S_m$ of some ground set $S$ complement-disjoint, if their complements $S\setminus S_1,\dots,S\setminus S_m$ are disjoint (and note that this condition is in particular satisfied if $S_1=\dots=S_m=S$).

As in the lemma statement, let $\lambda>0$ and let $A_1,\dots,A_m,B_1,\dots,B_m$ be complement-disjoint subsets of $\{1,\dots,n\}$ of size $n-L$. Recall that we are conditioning on an outcome of the matrix $M_n$ such that we have $\vert \per M_n[A_\ell,B_\ell]\vert\geq \lambda$ for $\ell=1,\dots,m$. Also recall that we are assuming that $m$ is large.

Let $x_{1},\dots,x_{n},z$ be the entries of the new row and column in $M_{n+1}$, and let us condition on any fixed  outcome of $z$ (which we no longer view as being random).

First, starting from the complement-disjoint subsets $A_1,\dots,A_m,B_1,\dots,B_m\su \{1,\dots,n\}$ of size $n-L$, we will construct certain complement-disjoint subsets $A_1^*,\dots,A_m^*,B_1^*,\dots,B_m^*\su \{1,\dots,n\}$ of size $n-L+1$. The plan is then to choose the desired subsets $A_1',\dots,A_{m'}',B_1',\dots,B_{m'}'$ in \cref{lem:endgame-step} to each be of the form $A_i^*\cup \{n+1\}$ or $B_i^*\cup \{n+1\}$, for suitably chosen $i\in \{1,\dots,m\}$.

\begin{claim}\label{claim-sets-A-B-prime}
We can find quadruples $(A^*_\ell,B^*_\ell,i_\ell,j_\ell)$ for $\ell\in\{ 1,\dots,m\}$, satisfying the following conditions.
\begin{itemize}
\item For each $\ell\in\{ 1,\dots,m\}$, we have $A_{\ell}^*,B_{\ell}^*\subseteq\{ 1,\dots,n\} $ and $\vert A^*_\ell\vert=\vert B^*_\ell\vert=n-L+1$, and furthermore $i_\ell,j_\ell\in A_\ell^*\cap B_\ell^*$.
\item The elements $i_{1},j_{1}, \dots, i_{m},j_{m}\in \{1,\dots,n\}$ are distinct.
\item The sets $A_{1}^*,B_{1}^*,\dots,A_{m}^*,B_{m}^*$ are complement-disjoint (over the ground set $\{ 1,\dots,n\}$).
\item For each $\ell\in\{ 1,\dots,m\}$, if we view $\per M_{n+1}[A_{\ell}^*\cup\{ n+1\} ,B_{\ell}^*\cup\{ n+1\} ]$ as a polynomial in $x_{1},\dots,x_{n}$, then the coefficient of $x_{i_{\ell}}x_{j_{\ell}}$ has absolute value at least $\lambda/2$.
\end{itemize}
\end{claim}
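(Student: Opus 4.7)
The plan is to construct each quadruple $(A_\ell^*, B_\ell^*, i_\ell, j_\ell)$ independently for $\ell \in \{1,\dots,m\}$ by one application of \cref{lem:per-no-cancel} per $\ell$, and then to observe that the two global conditions (complement-disjointness of the new sets, and pairwise distinctness of the $i_\ell, j_\ell$) follow essentially for free from the original complement-disjointness hypothesis.

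For each $\ell$ (assume $L \ge 2$ for the moment), I would pick an arbitrary $a_\ell \in \{1,\dots,n\} \setminus A_\ell$ and two arbitrary distinct elements $b_1^{(\ell)}, b_2^{(\ell)} \in \{1,\dots,n\} \setminus B_\ell$. Complement-disjointness of $A_\ell, B_\ell$ gives $\{1,\dots,n\}\setminus A_\ell \subseteq B_\ell$ and $\{1,\dots,n\}\setminus B_\ell \subseteq A_\ell$, so $a_\ell \in B_\ell \setminus A_\ell$ and $b_1^{(\ell)}, b_2^{(\ell)} \in A_\ell \setminus B_\ell$; hence \cref{lem:per-no-cancel} applies to $(M_n, A_\ell, B_\ell, a_\ell, b_1^{(\ell)}, b_2^{(\ell)})$ and produces distinct $i_\ell, j_\ell \in \{a_\ell, b_1^{(\ell)}, b_2^{(\ell)}\}$ satisfying
\[\bigl|\per M_n[A_\ell^*, B_\ell^*]^{(i_\ell, j_\ell)} + \per M_n[A_\ell^*, B_\ell^*]^{(j_\ell, i_\ell)}\bigr| \ge \lambda/2,\]
with $A_\ell^* = A_\ell \cup \{a_\ell\}$ and $B_\ell^* = (B_\ell \setminus \{a_\ell\}) \cup \{i_\ell, j_\ell\}$, both of size $n-L+1$, and with $i_\ell, j_\ell \in A_\ell^* \cap B_\ell^*$. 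By \cref{fact:per-double-expansion} (invoked with the symmetric extension $y_i = x_i$, after conditioning on $z$), the displayed expression is exactly the absolute value of the coefficient of $x_{i_\ell} x_{j_\ell}$ in $\per M_{n+1}[A_\ell^* \cup \{n+1\}, B_\ell^* \cup \{n+1\}]$ viewed as a polynomial in $x_1,\dots,x_n$, which is what the claim requires.

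The verification of the two remaining global conditions rests on a single observation: every element introduced in the construction ($a_\ell$, $b_s^{(\ell)}$, and hence $i_\ell, j_\ell$) lies in $(\{1,\dots,n\}\setminus A_\ell) \cup (\{1,\dots,n\}\setminus B_\ell)$, and these unions are pairwise disjoint across different $\ell$ by the original complement-disjointness hypothesis. This instantly gives the distinctness of $i_1, j_1, \dots, i_m, j_m$ across different $\ell$, while within a single $\ell$ distinctness is already part of the output of \cref{lem:per-no-cancel}. For complement-disjointness of $A_1^*, B_1^*, \dots, A_m^*, B_m^*$, one checks that $\{1,\dots,n\}\setminus A_\ell^* \subseteq \{1,\dots,n\}\setminus A_\ell$ and $\{1,\dots,n\}\setminus B_\ell^* \subseteq (\{1,\dots,n\}\setminus B_\ell) \cup \{a_\ell\}$, where the extra element $a_\ell$ itself lies in $\{1,\dots,n\}\setminus A_\ell$; so every element of any new complement lies in one of the old pairwise disjoint complements.

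The only real wrinkle I anticipate is the edge case $L = 1$, in which $|A_\ell \setminus B_\ell| = 1$ and \cref{lem:per-no-cancel} cannot be invoked as stated. Here $A_\ell^* = B_\ell^* = \{1,\dots,n\}$ is forced by the size requirement, so the relevant submatrix is $M_n$ itself, which is symmetric; thus $\per M_n^{(i,j)} = \per M_n^{(j,i)}$ via transposition, and the coefficient of $x_i x_j$ collapses to $2\per M_n^{(i,j)}$. Taking $(i_\ell, j_\ell)$ to be the pair consisting of the single element of $\{1,\dots,n\}\setminus A_\ell$ and the single element of $\{1,\dots,n\}\setminus B_\ell$ then gives $|M_n[A_\ell, B_\ell]| \ge \lambda$ as the value of $|\per M_n^{(i_\ell,j_\ell)}|$, hence a coefficient of absolute value at least $2\lambda \ge \lambda/2$; distinctness across $\ell$ is once again immediate from complement-disjointness. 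I would treat this case separately at the outset and otherwise follow the uniform recipe above for $L \ge 2$.
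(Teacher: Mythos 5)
Your proposal is correct and follows essentially the same approach as the paper: a separate treatment of the degenerate case $L=1$ (where the full-square submatrix and the symmetry of $M_n$ give the coefficient directly) and, for $L\ge 2$, one application of Lemma~\ref{lem:per-no-cancel} per index $\ell$ with the identical construction $A_\ell^* = A_\ell\cup\{a_\ell\}$, $B_\ell^* = (B_\ell\setminus\{a_\ell\})\cup\{i_\ell,j_\ell\}$ and the coefficient computed via Fact~\ref{fact:per-double-expansion}. The only difference is that you verify the complement-disjointness and distinctness conditions more explicitly than the paper does; your verification is sound (though the final step, that $\{1,\dots,n\}\setminus A_\ell^*$ and $\{1,\dots,n\}\setminus B_\ell^*$ are disjoint from each other, is worth spelling out: it follows because $a_\ell$ is the only potential common point and $a_\ell\in A_\ell^*$).
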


\begin{proof}
First consider the case $L=1$. For every $\ell\in \{1,\dots,m\}$, let us take $A_{\ell}^*=B_{\ell}^*=\{ 1,\dots,n\}$, let $i_{\ell}$ be the single element of $\{ 1,\dots,n\}\setminus A_\ell$, and let $j_{\ell}$ be the single element of $\{ 1,\dots,n\}\setminus B_\ell$. Note that the second condition is satisfied by the complement-disjointness of the sets $A_{1},B_{1},\dots,A_{m},B_{m}$. For the last condition, observe that by \cref{fact:per-double-expansion} and the symmetry of our matrices, the coefficient of $x_{i_{\ell}}x_{j_{\ell}}$ in $\per M_{n+1}$ equals $\per M_n[A_{\ell},B_{\ell}]+\per M_n[B_{\ell},A_{\ell}]=2\per M_n[A_{\ell},B_{\ell}]$, which has absolute value at least $2\lambda\ge\lambda/2$.

Now, we consider the case $L\geq 2$. For each $\ell\in \{1,\dots,m\}$, choose $a_\ell\in B_\ell\setminus A_{\ell}$ and distinct $b_{\ell},b_{\ell}'\in A_\ell\setminus B_{\ell}$ (this is possible since $A_\ell$ and $B_\ell$ are complement-disjoint and have size at most $n-2$). Now let $i_{\ell},j_{\ell}\in \{a_\ell,b_\ell,b_\ell'\}$ be
as in \cref{lem:per-no-cancel}, and let $A_{\ell}^*=A_{\ell}\cup\{ a_\ell\} $ and $B_{\ell}^*=(B_{\ell}\setminus\{ a_\ell\} )\cup\{ i_{\ell},j_{\ell}\}$. For the last condition, note that by  \cref{fact:per-double-expansion} the coefficient of $x_{i_{\ell}}x_{j_{\ell}}$ in $\per M_{n+1}[A_{\ell}^*\cup\{ n+1\} ,B_{\ell}^*\cup\{ n+1\}]$ equals $\per M_n[A_{\ell}^*,B_{\ell}^*]^{(i_\ell,j_\ell)}+\per M_n[B_{\ell}^*,A_{\ell}^*]^{(j_\ell,i_\ell)}$, which has absolute value at least $\lambda/2$ by the conclusion of \cref{lem:per-no-cancel}.
\end{proof}

Fix quadruples $(A^*_\ell,B^*_\ell,i_\ell,j_\ell)$ for $\ell\in\{ 1,\dots,m\}$ as in \cref{claim-sets-A-B-prime}. Let $I=\{i_{1},j_{1}, \dots, i_{m},j_{m}\}\su \{1,\dots,n\}$, and let us condition on any outcome for all the variables $x_i$ with $i\notin I$ (which we will no longer view as being random). For every $\ell=1,\dots,m$, define $P_{\ell}=\per M_{n+1}[A_{\ell}^*\cup\{ n+1\} ,B_{\ell}^*\cup\{ n+1\}]$, viewed as a polynomial in the variables $x_i$ for $i\in I$, but with all quadratic terms $x_i^2$ replaced by $1$ (recall that our variables $x_i$ are chosen in $\{-1,1\}$). Then $P_\ell$ is a multilinear quadratic polynomial and the coefficient of $x_{i_{\ell}}x_{j_{\ell}}$ has absolute value at least $\lambda/2$.

Now, after our conditioning, the only remaining randomness comes from the $2m$ variables $x_i$ for $i\in I$. It suffices to show that for sufficiently large $m$ we have
\begin{equation}\label{eq-many-l-satisfy}
\Pr\left(|P_{\ell}|\ge\lambda/(4n^4)\text{ for at least }m/36\text{ indices }\ell\in \{1,\dots,m\}\right)\geq 1-m^{-1/24}.
\end{equation}
Indeed, if the event in \cref{eq-many-l-satisfy} holds, then we can take the sets $A_1',\dots,A_{m'}, B_1',\dots,B_{m'}$ in \cref{lem:endgame-step} to be the sets $A_\ell^*\cup\{n+1\}$ and $B_\ell^*\cup\{n+1\}$ for  $m'=\ceil{m/36}$ different indices $\ell\in \{1,\dots,n\}$ for which we have $|P_{\ell}|=\vert \per M_{n+1}[A_{\ell}^*\cup\{ n+1\} ,B_{\ell}^*\cup\{ n+1\}]\vert\ge\lambda/(4n^{4})$. 

Let $\sigma=\lambda/(4n^2)$ and $\tau=\lambda/(4n^4)$. Using the notation introduced above \cref{lem:MNV}, for each $\ell\in \{1,\dots,m\}$ we consider the graph $G_\ell=G^{(\tau)}(P_\ell)$ on the vertex set $I$ whose edges correspond to the coefficients of the polynomial $P_\ell$ of absolute value at least $\tau$. We say that the index $\ell\in \{1,\dots,m\}$ is \emph{easy} if the graph $G_\ell$ has matching number $\nu(G_\ell)\geq m^{1/6}$. If there are many easy indices, then we can prove \cref{eq-many-l-satisfy} using the Meka--Nguyen--Vu polynomial anti-concentration inequality (\cref{lem:MNV}), as follows.

\begin{claim}
If there are at least $m/3$ easy indices $\ell\in \{1,\dots,m\}$, then \cref{eq-many-l-satisfy} holds.
\end{claim}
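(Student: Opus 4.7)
The plan is to apply the Meka--Nguyen--Vu polynomial anti-concentration inequality (\cref{lem:MNV}) to the polynomial $P_\ell$ for each easy index $\ell$, and then use a straightforward Markov argument to count how many easy indices fail to satisfy $|P_\ell(\mathbf{x})| \geq \tau$.

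First I would note that each $P_\ell$ is, by construction, a multilinear quadratic polynomial in the variables $x_i$ for $i \in I$, and that $P_\ell$ is determined by our conditioning, so the set $\mathcal{E} \subseteq \{1,\dots,m\}$ of easy indices is deterministic and has size $|\mathcal{E}| \geq m/3$. For each $\ell \in \mathcal{E}$, the definition of easy gives $\nu(G^{(\tau)}(P_\ell)) \geq m^{1/6} \geq 3$ (for $m$ large), so \cref{lem:MNV} applies with $r = \tau$ and yields
\[
\Pr\bigl(|P_\ell(\mathbf{x})| \leq \tau\bigr) \leq \frac{(\log \nu(G^{(\tau)}(P_\ell)))^{C}}{\nu(G^{(\tau)}(P_\ell))^{1/2}} \leq \frac{(\log m)^{C}}{m^{1/12}},
\]
where the probability is over the remaining random variables $x_i$ for $i \in I$.

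Next, let $N$ be the (random) number of $\ell \in \mathcal{E}$ with $|P_\ell(\mathbf{x})| < \tau$. Linearity of expectation gives $\E[N] \leq |\mathcal{E}| \cdot (\log m)^{C}/m^{1/12} \leq m (\log m)^{C}/m^{1/12}$, and Markov's inequality then yields
\[
\Pr\!\left(N \geq \tfrac{11 m}{36}\right) \leq \frac{36\,\E[N]}{11 m} = O\!\left(\frac{(\log m)^{C}}{m^{1/12}}\right) \leq m^{-1/24}
\]
for $m$ sufficiently large. On the complementary event $N < 11m/36$, the number of easy $\ell$ with $|P_\ell(\mathbf{x})| \geq \tau = \lambda/(4n^4)$ is at least $|\mathcal{E}| - N \geq m/3 - 11m/36 = m/36$, which is exactly what is required for \cref{eq-many-l-satisfy}.

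The main thing to verify carefully is that the hypotheses of \cref{lem:MNV} really are met (multilinearity of $P_\ell$, which was arranged when the quadratic terms $x_i^2$ were replaced by $1$, and $\nu \geq 3$, which follows from $\nu \geq m^{1/6}$ for large $m$). Given those checks, the rest is just matching the constants: the polynomial savings $m^{-1/12}$ from Meka--Nguyen--Vu comfortably beats the target $m^{-1/24}$, so there is slack to absorb the logarithmic factor $(\log m)^{C}$ and the factor $36/11$ from Markov.
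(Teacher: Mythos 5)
Your proof is correct and follows essentially the same route as the paper: apply \cref{lem:MNV} to each easy index (using that $P_\ell$ is multilinear and $\nu(G^{(\tau)}(P_\ell))\geq m^{1/6}\geq 3$), then run a Markov-style counting argument to conclude that at least $m/36$ easy indices survive with probability $\geq 1-m^{-1/24}$. The only cosmetic difference is that you apply Markov's inequality directly to the number of failures, whereas the paper invokes its boxed \cref{lem:Markov} (with $p=1-m^{-1/18}$, $q=1/2$), and the paper tidies the $(\log\nu)^C/\nu^{1/2}$ bound into $\nu^{-1/3}$ first; both variants give the same thing.
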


\begin{proof}
Recall that for each easy index $\ell$, we have $\nu(G^{(\tau)}(P_\ell))=\nu(G_\ell)\geq m^{1/6}$. Hence by \cref{lem:MNV} we have that
\[\Pr(\vert P_\ell\vert\geq \tau)\geq 1-\frac{(\log \nu(G_\ell))^{C}}{\nu(G_\ell)^{1/2}}\geq 1-\nu(G_\ell)^{-1/3}\geq 1-m^{-1/18}\]
for sufficiently large $m$ (where $C$ is the absolute constant appearing in the statement of \cref{lem:MNV}). Hence by Markov's inequality (specifically \cref{lem:Markov} applied with $p=1-m^{-1/18}$ and $q=1/2$), with probability at least $1-2m^{-1/18}\geq 1-m^{-1/24}$ we have $\vert P_\ell\vert\geq \tau=\lambda/(4n^{4})$ for at least $(1/2)\cdot (m/3)=m/6$ easy indices $\ell\in \{1,\dots,m\}$ (again assuming that $m$ is sufficiently large). This in particular proves \cref{eq-many-l-satisfy}.
\end{proof}

Let us from now on assume that there are at least $2m/3$ indices $\ell\in \{1,\dots,m\}$ which are not easy. For each of these non-easy $\ell$, since $G_\ell$ has no large matching it must have a large vertex cover, as follows.

\begin{claim}\label{claim:vertex-cover}
For every non-easy index $\ell\in \{1,\dots,m\}$, there is a subset $S_\ell\su I$ of size $\vert S_\ell\vert \leq 2m^{1/6}$ such that each edge of the graph $G_\ell$ contains at least one vertex in $S_\ell$ (in other words, $S_\ell$ is a vertex cover of the graph $G_\ell$).
\end{claim}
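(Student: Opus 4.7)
The plan is to invoke the standard fact that in any graph, the vertex cover number is at most twice the matching number. Concretely, I would take a maximum matching $M$ of $G_\ell$, which by the definition of a non-easy index has size $\nu(G_\ell) < m^{1/6}$, and define $S_\ell \subseteq I$ to be the set of all endpoints of edges in $M$. This immediately gives $|S_\ell| \leq 2\nu(G_\ell) < 2m^{1/6}$, which is the required size bound.

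The remaining point is to verify that $S_\ell$ is a vertex cover of $G_\ell$, which I would prove by contradiction: if some edge $e \in E(G_\ell)$ had both endpoints outside $S_\ell$, then $e$ would be disjoint from every edge of $M$, so $M \cup \{e\}$ would be a matching in $G_\ell$ of size $\nu(G_\ell) + 1$, contradicting the maximality of $M$.

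The step is essentially a one-line application of the standard inequality $\tau(G) \leq 2\nu(G)$ via the greedy/maximum matching argument, so there is no real obstacle here; the only thing to be mindful of is that we want a strict bound $|S_\ell| \leq 2m^{1/6}$ (not strict), which follows from $\nu(G_\ell) < m^{1/6}$ giving $|S_\ell| \leq 2(m^{1/6}-1) < 2m^{1/6}$, or simply from the definition of non-easy (where $\nu(G_\ell) < m^{1/6}$ means $2\nu(G_\ell) \leq 2m^{1/6}$ after taking integer parts).
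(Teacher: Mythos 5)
Your proof is correct and matches the paper's argument: both take a maximum (or maximal) matching of $G_\ell$, define $S_\ell$ to be the set of its endpoints, and observe that maximality forces $S_\ell$ to be a vertex cover with $|S_\ell|\le 2\nu(G_\ell)<2m^{1/6}$.
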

\begin{proof}
Let us take a maximal collection of disjoint edges in $G_\ell$ (this collection consists of at most $\nu(G_\ell)< m^{1/6}$ edges), and let $S_\ell$ consist of all the vertices contained in one of these edges. Then by the maximality of the chosen edge collection, each edge of $G_\ell$ must contain at least one vertex in $S_\ell$.
\end{proof}

For each non-easy $\ell$, fix a subset $S_\ell\su I$ as in \cref{claim:vertex-cover}. We now briefly describe the strategy of the remainder of the proof. The idea is that all the degree-2 terms of $P_\ell$ whose coefficient has large absolute value must contain a variable whose index is in $S_\ell$. So if we condition on outcomes of $x_i$ for $i\in S_\ell$, then $P_\ell$ ``essentially'' becomes a linear polynomial (apart from some small terms that we can ignore). If this linear polynomial has many  coefficients with large absolute value, then we can apply the Erd\H os--Littlewood--Offord inequality to show that $|P_\ell|$ is typically quite large. However, it is possible that for most of the non-easy $\ell$, we end up with linear polynomials which have few coefficients of large absolute value (in which case we will not be able to use such an argument). It turns out that this is unlikely to happen unless for many $\ell$ the polynomial $P_\ell$ each essentially depend on only a few of the variables $x_i$ (we will call such indices $\ell$ \emph{short}). We will be able to handle the case that there are many such indices $\ell$ using the Azuma--Hoeffding inequality.

Let us say that a variable $x_i$ with $i\in I$ is \emph{bad} if we have $i\in S_\ell$ for at least $m^{1/3}$ non-easy indices $\ell\in \{1,\dots,m\}$. Note that by a simple counting argument, there are at most $m\cdot 2m^{1/6}/m^{1/3}=2m^{5/6}$ bad variables. We say that a variable $x_i$ with $i\in I$ is \emph{good} if it is not bad. Let $I_{\text{good}}\su I$ be the set of all $i\in I$ such that $x_i$ is good (and note that $\vert I_{\text{good}}\vert\leq \vert I\vert =2m$).

In addition to all of our previous conditioning, let us now also condition on any fixed outcome of all bad variables $x_i$. This means that at this point the only remaining randomness comes from the variables $x_i$ with $i\in I_{\text{good}}$. After fixing the outcomes for the bad variables, we can interpret each polynomial $P_\ell$ (for $\ell\in \{1,\dots,m\}$) as a polynomial in the variables $x_i$ with $i\in I_{\text{good}}$. It suffices to prove \cref{eq-many-l-satisfy} with this additional conditioning on the outcomes of the bad variables.

Note that for each $\ell\in \{1,\dots,m\}$ which is not easy, for any distinct $i,j\in I_{\text{good}}\setminus S_\ell$ the coefficient of $x_ix_j$ in $P_\ell$ has absolute value less than $\tau$. Indeed, this follows from the definition of the graph $G_\ell=G^{(\tau)}(P_\ell)$ and the fact that every edge of $G_{\ell}$ contains at least one vertex in $S_\ell$.

Recall that $\sigma=\lambda/(4n^2)$. We say that an index $\ell\in \{1,\dots,m\}$ is \emph{short} if there are at most $6m^{1/6}$ good variables $x_i$ such that $x_i$ appears in a term of $P_\ell$ whose coefficient has absolute value at least $\sigma$.

\begin{claim}
If there are at least $m/3$ short indices $\ell\in \{1,\dots,m\}$, then \cref{eq-many-l-satisfy} holds.
\end{claim}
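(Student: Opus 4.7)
The plan is to locate, inside the $\geq m/3$ short indices, a subset of size $\geq m/4$ for which both $i_\ell$ and $j_\ell$ are good variables, and then to leverage the $\lambda/2$ lower bound on the $x_{i_\ell}x_{j_\ell}$ coefficient in $P_\ell$ (from \cref{claim-sets-A-B-prime}) through \cref{fact:non-degenerate}. Finally I will combine the resulting per-index anti-concentration estimates using the fact that the focus pairs $\{i_\ell,j_\ell\}$ are pairwise disjoint, obtaining the high-probability bound via stochastic dominance and the Chernoff inequality.

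First I would show that among the short indices only few can have $i_\ell$ or $j_\ell$ bad. Since \cref{claim-sets-A-B-prime} guarantees that $i_1,j_1,\ldots,i_m,j_m$ are all distinct, and there are at most $2m^{5/6}$ bad variables, at most $4m^{5/6}$ indices $\ell\in\{1,\ldots,m\}$ can have $i_\ell$ or $j_\ell$ bad. Hence the set $G$ of short indices with both $x_{i_\ell},x_{j_\ell}$ good satisfies $M:=|G|\geq m/3-4m^{5/6}\geq m/4$ for $m$ sufficiently large.

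Next, for each $\ell\in G$, I would condition on arbitrary outcomes for all variables $x_i$ with $i\notin\{i_\ell,j_\ell\}$ (this is consistent with the earlier conditioning on bad variables, since $i_\ell,j_\ell$ are good). Since $P_\ell$ is multilinear, the coefficient of $x_{i_\ell}x_{j_\ell}$ in the reduced two-variable polynomial equals the original coefficient, which has absolute value at least $\lambda/2$ by \cref{claim-sets-A-B-prime}. Applying \cref{fact:non-degenerate} with $r=\lambda/2$ gives
\[
\Pr\bigl(|P_\ell|\geq \lambda/2 \,\big|\, (x_i)_{i\neq i_\ell, j_\ell}\bigr)\geq 1/4,
\]
and since $\lambda/2\geq \tau=\lambda/(4n^4)$, the same lower bound holds for the event $\{|P_\ell|\geq \tau\}$, uniformly in the conditioning.

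To combine these estimates, I would enumerate $G=\{\ell_1,\ldots,\ell_M\}$ arbitrarily and sequentially reveal the (disjoint) pairs $(x_{i_{\ell_k}},x_{j_{\ell_k}})$, producing a filtration $\mathcal{F}_0\subseteq\cdots\subseteq\mathcal{F}_M$. Setting $Y_k=\one\{|P_{\ell_k}|\geq \tau\}$ and averaging the previous bound over the not-yet-revealed future pairs, one gets $\E[Y_k\mid \mathcal{F}_{k-1}]\geq 1/4$, so $\sum_{k=1}^M Y_k$ stochastically dominates $\Bin(M,1/4)$. The Chernoff bound (\cref{lem:Chernoff}) then yields
\[
\Pr\!\left(\sum_{k=1}^M Y_k<m/36\right)\leq \exp(-\Omega(m))\leq m^{-1/24}
\]
for $m$ sufficiently large, which is exactly \cref{eq-many-l-satisfy}. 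The main thing to verify carefully is that $\E[Y_k\mid \mathcal{F}_{k-1}]\geq 1/4$ survives integration over the not-yet-exposed future pairs; this holds automatically because the per-index lower bound above is uniform over \emph{every} conditioning of the variables outside $\{x_{i_{\ell_k}},x_{j_{\ell_k}}\}$, so the integrand is pointwise at least $1/4$.
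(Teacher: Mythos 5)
Your per-index anti-concentration step is fine, but the concentration step has a genuine gap. You define $\mathcal{F}_k=\sigma(x_{i_{\ell_1}},x_{j_{\ell_1}},\dots,x_{i_{\ell_k}},x_{j_{\ell_k}})$ and claim that $\E[Y_k\mid\mathcal{F}_{k-1}]\ge 1/4$ implies $\sum_k Y_k$ stochastically dominates $\Bin(M,1/4)$. This implication requires $Y_k$ to be $\mathcal{F}_k$-measurable (as in the paper's \cref{lem:AH2}, where $W_k$ must be determined by $Z_k$, and in the discussion after \cref{lem:Chernoff}, which speaks of a ``property'' of ``step $k$'' determined by the steps so far). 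But $Y_k=\one\{|P_{\ell_k}|\ge\tau\}$ depends on \emph{all} of the good variables, not just the first $k$ revealed pairs, so it is not $\mathcal{F}_k$-measurable. The conditional bound $\E[Y_k\mid\mathcal{F}_{k-1}]\ge 1/4$ alone does not give dominance over a binomial: as a toy example, take $Y_1=Y_2=\one\{x=1\}$ for a single Rademacher $x$ with $\mathcal{F}_1=\sigma(y)$ for an independent $y$; both conditional expectations are $1/2$, yet $Y_1+Y_2\in\{0,2\}$ does not dominate $\Bin(2,1/2)$.

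A strong warning sign is that your argument never actually uses the hypothesis that the $\ell$'s are \emph{short} --- it would, if valid, prove \cref{eq-many-l-satisfy} outright for the $\ge m-4m^{5/6}$ indices with $i_\ell,j_\ell$ good, making the paper's whole case analysis superfluous. Shortness is exactly what the paper's proof needs for the concentration step: it passes to truncated polynomials $P_\ell'$ that each involve at most $6m^{1/6}$ good variables, conditions away the ``short-popular'' variables so that each remaining variable appears in $P_\ell'$ for at most $m^{1/3}$ short $\ell$, and then applies McDiarmid (\cref{lem:AH}) with bounded differences $m^{1/3}$. Without some device of this kind to control how many of the $P_\ell$'s each single variable can influence, the count $Y$ of successes need not concentrate, because the $P_\ell$'s may share large common terms and hence the events $\{|P_\ell|\ge\tau\}$ can be highly positively correlated. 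You should replace the ``reveal disjoint pairs and invoke Chernoff via stochastic dominance'' step with a bounded-differences argument after truncating to $P_\ell'$ and conditioning on short-popular variables, as in the paper.
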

\begin{proof}
Recall that we are viewing each $P_\ell$ as a polynomial in the variables $x_i$ with $i\in I_{\text{good}}$. Now, let $P_\ell'$ be the polynomial obtained from $P_\ell$ by deleting all terms whose coefficients have absolute value less than $\sigma$. Note that for each short index $\ell\in \{1,\dots,m\}$, the polynomial $P_\ell'$ contains at most $6m^{1/6}$ different variables $x_i$. Also note that we always have $\vert P_\ell-P_\ell'\vert< \sigma n^2=\lambda/4$ for any outcomes of the good variables $x_i\in \{-1,1\}$ (this is because at most $n^2$ terms get deleted in $P_\ell'$, each with absolute value less than $\sigma$).

We say that a good variable is \emph{short-popular} if it appears in the polynomial $P_\ell'$ for at least $m^{1/3}$ different short indices $\ell\in \{1,\dots,m\}$. Note that there are at most $m\cdot 6m^{1/6}/m^{1/3}=6m^{5/6}$ short-popular variables. For the remainder of the proof of this claim, let us now also condition on any fixed outcomes for all short-popular variables $x_i$, which we no longer view as being random.

Since there are at most $2m^{5/6}$ bad variables, and at most $6m^{5/6}$ short-popular variables, there are at least $m/3-8m^{5/6}\geq m/4$ short indices $\ell\in \{1,\dots,m\}$ for which both of the variables $x_{i_\ell}$ and $x_{j_\ell}$ are good and not short-popular (for the inequality here, we are assuming that $m$ is sufficiently large). For any such index $\ell$, the coefficient of $x_{i_\ell}x_{j_\ell}$ in the polynomial $P_\ell'$ has absolute value at least $\lambda/2$ (since this is also the case for $P_\ell$, and $\lambda/2>\sigma$). Hence we can apply \cref{fact:non-degenerate} to find that $\Pr(|P_{\ell}'|\ge\lambda/2)\ge 1/4$.

So, if $Y$ is the number of short indices $\ell$ with $|P_{\ell}'|\ge\lambda/2$, then $\E Y\ge(m/4)\cdot (1/4)=m/16$. Recall that we already conditioned on outcomes of all the short-popular variables. So each of the remaining random variables occur in $P_\ell'$ for at most $m^{1/3}$ short indices $\ell$, and hence varying each individual variable affects the value of $Y$ by at most $m^{1/3}$. Therefore, by \cref{lem:AH} we have $Y\ge m/32$ with probability at least
\[1-2\exp\left(-\frac{(m/32)^{2}}{2\cdot 2m\cdot m^{2/3}}\right)=1-2\exp\left(-\frac{m^{1/3}}{2^{12}}\right)\geq 1-m^{-1/24},\]
assuming that $m$ is sufficiently large. Hence with probability at least $1-m^{-1/24}$ there are at least $m/32$ short indices $\ell$ with $|P_{\ell}'|\ge\lambda/2$, which implies that $|P_{\ell}|\ge\lambda/2-\lambda/4\geq \lambda/(4n^4)$. This in particular proves \cref{eq-many-l-satisfy}.
\end{proof}

We may from now on assume that there are at least $m/3$ indices $\ell\in \{1,\dots,m\}$ which are not easy and not short. Let us call such indices \emph{interesting}.

For every interesting index $\ell\in \{1,\dots,m\}$, recall that $P_\ell$ is a multilinear polynomial in the variables $x_i$ with $i\in I_{\text{good}}$. Furthermore, recall that for any distinct $i,j\in I_{\text{good}}\setminus S_\ell$ the coefficient of $x_ix_j$ in $P_\ell$ has absolute value less than $\tau$. Let $P_\ell^*$ be the polynomial obtained from $P_\ell$ by deleting all terms of the form $x_ix_j$ for $i,j\in I_{\text{good}}\setminus S_\ell$. Note that we always have $\vert P_\ell-P_\ell^*\vert\leq \tau n(n-1)/2\leq \tau (n^2-1)=\sigma-\tau$ (for all outcomes of the $x_i$).

For every interesting $\ell\in \{1,\dots,m\}$, there are at least $6m^{1/6}$ good variables which appear in a term of $P_\ell$ whose coefficient has absolute value at least $\sigma$. Since only terms with coefficient less than $\tau<\sigma$ get deleted in $P_\ell^*$, this means that there are also at least $6m^{1/6}$ good variables which appear in a term of $P_\ell^*$ whose coefficient has absolute value at least $\sigma$.

Now, for every interesting $\ell\in \{1,\dots,m\}$, let us interpret $P_\ell^*\in \mathbb{R}[x_i, i\in I_{\text{good}}]$ as a polynomial $Q_\ell\in \mathbb{R}[x_i, i\in S_\ell][x_i, i\in I_{\text{good}}\setminus S_\ell]$, i.e.\ as a polynomial in the variables $x_i$ for $i\in I_{\text{good}}\setminus S_\ell$ whose coefficients are polynomials in the variables $x_i$ for $i\in S_\ell$. Then $Q_\ell$ is a linear polynomial (in the variables $x_i$ for $i\in I_{\text{good}}\setminus S_\ell$). Its constant coefficient is a quadratic polynomial, and its other coefficients are linear polynomials (in the variables $x_i$ for $i\in S_\ell$).

Let $T_\ell$ be the number of degree-$1$ coefficients of the linear polynomial $Q_\ell$ (in the variables $x_i$ for $i\in I_{\text{good}}\setminus S_\ell$) which have absolute value at least $\sigma$. This is a random variable depending on the outcomes of the $x_i$ with $i\in S_\ell$.

\begin{claim}\label{claim-T-ell-large}
For each interesting index $\ell\in \{1,\dots,m\}$, we have $\Pr\left(T_\ell\geq m^{1/6}\right)\geq 1/3$.
\end{claim}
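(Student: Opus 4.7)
My plan is to identify a large subset $J_\ell\su I_{\text{good}}\setminus S_\ell$ of indices $i$ for which the coefficient of $x_i$ in $Q_\ell$ (a linear polynomial in the variables $x_j$ for $j\in S_\ell$) has at least one coefficient of absolute value $\geq \sigma$, and then deduce the claim via \cref{fact:non-degenerate-linear} and \cref{lem:Markov}. Since $\ell$ is interesting, it is not short: there are at least $6m^{1/6}$ good variables $x_i$ appearing in some term of $P_\ell$ (equivalently of $P_\ell^*$, as noted in the paper) with coefficient of absolute value $\geq \sigma$. Since $|S_\ell|\leq 2m^{1/6}$, at least $4m^{1/6}$ of these indices $i$ lie in $I_{\text{good}}\setminus S_\ell$; denote this collection by $J_\ell$.

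Next, for each $i\in J_\ell$, I would observe that the term of $P_\ell^*$ witnessing the appearance of $x_i$ with large coefficient is either of the form $x_i$ itself or of the form $x_i x_j$ for some $j$, and in the latter case $j$ must lie in $S_\ell$: otherwise $i,j\in I_{\text{good}}\setminus S_\ell$ and the term would have been deleted in passing from $P_\ell$ to $P_\ell^*$. Now recall the structure of $Q_\ell$: for each $i\in I_{\text{good}}\setminus S_\ell$, the coefficient of $x_i$ in $Q_\ell$ is the linear polynomial in $\{x_j:j\in S_\ell\}$ whose constant term equals the coefficient of $x_i$ in $P_\ell^*$ and whose coefficient of $x_j$ equals the coefficient of $x_ix_j$ in $P_\ell^*$. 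Hence for each $i\in J_\ell$ this linear polynomial has at least one coefficient of absolute value $\geq \sigma$. By \cref{fact:non-degenerate-linear}, when the variables $x_j$ (for $j\in S_\ell$) are independent uniform $\pm1$, this linear polynomial takes value of absolute value $\geq \sigma$ with probability at least $1/2$.

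Let $E_i$ be the event that the coefficient of $x_i$ in $Q_\ell$ has absolute value at least $\sigma$, so $\Pr(E_i)\geq 1/2$ for each $i\in J_\ell$, and $T_\ell\geq \#\{i\in J_\ell:E_i\text{ holds}\}$. Applying \cref{lem:Markov} to the $|J_\ell|\geq 4m^{1/6}$ events $\{E_i:i\in J_\ell\}$ with $p=1/2$ and $q=m^{1/6}/|J_\ell|\leq 1/4$, we conclude that at least $m^{1/6}$ of these events hold simultaneously with probability at least $(p-q)/(1-q)\geq (1/2-1/4)/(3/4)=1/3$. This yields $\Pr(T_\ell\geq m^{1/6})\geq 1/3$, as required.

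The argument is essentially direct once the reductions are unpacked; the main thing to be careful about is the bookkeeping that guarantees, for each $i\in J_\ell$, a non-trivial coefficient of the linear polynomial sitting as the $x_i$-coefficient of $Q_\ell$. This relies on two features of the setup: only terms of coefficient smaller than $\tau<\sigma$ are removed when passing from $P_\ell$ to $P_\ell^*$, so the ``large'' witnesses from the not-short property survive; and the vertex-cover property of $S_\ell$ forces the partner variable of any surviving degree-$2$ term involving $x_i$ (for $i\in I_{\text{good}}\setminus S_\ell$) to lie in $S_\ell$, which is precisely where the remaining randomness resides at this stage.
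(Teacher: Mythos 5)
Your proof is correct and follows essentially the same route as the paper: identify at least $4m^{1/6}$ indices $i\in I_{\text{good}}\setminus S_\ell$ whose $x_i$-coefficient in $Q_\ell$ is a linear polynomial (in the variables indexed by $S_\ell$) with some coefficient of absolute value at least $\sigma$, apply \cref{fact:non-degenerate-linear} to each, and finish with \cref{lem:Markov} using $p=1/2$ and $q\leq 1/4$. The extra bookkeeping you supply — that the witnessing term is $x_i$ or $x_ix_j$ with $j\in S_\ell$, by the vertex-cover property and the definition of $P_\ell^*$ — is exactly the justification the paper leaves implicit.
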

\begin{proof}
Fix an interesting $\ell\in \{1,\dots,m\}$. Recall that there are at least $6m^{1/6}$ good variables $x_i$ which appear in a term of $P_\ell^*$ whose coefficient has absolute value at least $\sigma$. Since $\vert S_\ell\vert\leq 2m^{1/6}$, at least $4m^{1/6}$ of these variables satisfy $i\in I_{\text{good}}\setminus S_\ell$. For each such $i$, the coefficient of $x_i$ in $Q_\ell$ is a linear polynomial in the variables $x_j$ for $j\in S_{\ell}$, with at least one coefficient of absolute value at least $\sigma$. Thus, by \cref{fact:non-degenerate-linear}, with probability at least $1/2$ the coefficient of $x_i$ in $Q_\ell$ has absolute value at least $\sigma$. By Markov's inequality (specifically \cref{lem:Markov} applied with $p=1/2$ and $q=1/4$), with probability at least $1/3$ there are at least $(1/4)\cdot 4m^{1/6}=m^{1/6}$ different $i\in I_{\text{good}}\setminus S_\ell$ such that the coefficient of $x_i$ in $Q_\ell$ has absolute value at least $\sigma$. In other words, with probability at least $1/3$, we have $T_\ell\geq m^{1/6}$. 
\end{proof}

\begin{claim}\label{claim-T-ell-large-and-poly-small}
For each interesting index $\ell\in \{1,\dots,m\}$, we have $\Pr\left(T_\ell\geq m^{1/6}\text{ and }\vert P_\ell^*\vert<\sigma\right)\leq 3m^{-1/12}$.
\end{claim}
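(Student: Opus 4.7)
The plan is to apply the Erd\H{o}s--Littlewood--Offord inequality (\cref{lem:LO}) after conditioning on the variables indexed by $S_\ell$. The key observation is that $T_\ell$ is fully determined by the random variables $x_i$ for $i\in S_\ell$, because the coefficients of $Q_\ell$ (as a polynomial in the variables $x_i$ with $i\in I_{\text{good}}\setminus S_\ell$) depend only on those variables. In particular, we can condition on any outcome of the variables $x_i$ for $i\in S_\ell$ for which $T_\ell\ge m^{1/6}$.

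After such conditioning, note that $P_\ell^*$ is a polynomial in the $x_i$ for $i\in I_{\text{good}}$ obtained from $P_\ell$ by deleting all terms $x_ix_j$ with $i,j\in I_{\text{good}}\setminus S_\ell$. Thus every remaining non-constant term of $P_\ell^*$ is either linear in a single variable $x_i$ with $i\in I_{\text{good}}\setminus S_\ell$ (possibly multiplied by a variable $x_i$ with $i\in S_\ell$), or involves only variables from $S_\ell$. Once we condition on outcomes for the variables $x_i$ with $i\in S_\ell$, the polynomial $P_\ell^*$ becomes a linear polynomial in the remaining random variables $x_i$ with $i\in I_{\text{good}}\setminus S_\ell$, and this linear polynomial coincides with $Q_\ell$ after substituting the conditioned values for $x_i$ with $i\in S_\ell$. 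By the definition of $T_\ell$, on the event $T_\ell\ge m^{1/6}$, this linear polynomial has at least $m^{1/6}$ degree-$1$ coefficients of absolute value at least $\sigma$.

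Since the variables $x_i$ with $i\in I_{\text{good}}\setminus S_\ell$ are independent Rademacher random variables (and independent of those with $i\in S_\ell$), we may apply \cref{lem:LO} with $t=1$, $r=\sigma$, and with the role of $m$ there played by $m^{1/6}$. This yields
\[\Pr\bigl(|P_\ell^*|<\sigma\,\bigm|\,x_i\text{ for }i\in S_\ell\bigr)\le \Pr\bigl(|P_\ell^*|\le \sigma\,\bigm|\,x_i\text{ for }i\in S_\ell\bigr)\le \frac{3}{\sqrt{m^{1/6}}}=3m^{-1/12}\]
for any outcome of the variables $x_i$ with $i\in S_\ell$ satisfying $T_\ell\ge m^{1/6}$. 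Averaging over all such outcomes gives $\Pr(T_\ell\ge m^{1/6}\text{ and }|P_\ell^*|<\sigma)\le 3m^{-1/12}$, as desired.

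No serious obstacle is anticipated: the decomposition $P_\ell^*\leftrightarrow Q_\ell$ was carefully set up in the paragraphs immediately preceding the claim precisely so that this Littlewood--Offord argument goes through cleanly. The only thing to double-check is that the terms deleted when passing from $P_\ell$ to $P_\ell^*$ really are the only obstruction to $P_\ell^*$ being linear in the variables outside $S_\ell$ after conditioning --- and this is true by construction, since every remaining degree-$2$ monomial contains at least one factor $x_i$ with $i\in S_\ell$.
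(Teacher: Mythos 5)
Your proof is correct and matches the paper's argument: both condition on the variables $x_i$ with $i\in S_\ell$ (on the event $T_\ell\ge m^{1/6}$), observe that $P_\ell^*$ then becomes a linear polynomial in the remaining good variables with at least $m^{1/6}$ coefficients of absolute value at least $\sigma$, and apply the Erd\H{o}s--Littlewood--Offord inequality (\cref{lem:LO}) with $t=1$ and $r=\sigma$ to obtain the bound $3m^{-1/12}$.
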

\begin{proof}
Fix an interesting $\ell\in \{1,\dots,m\}$. Recall that $T_\ell$ depends only on the variables $x_i$ with $i\in S_\ell$. So, for the proof of this claim, let us condition on some outcome for the variables $x_i$ with $i\in S_\ell$ such that we have $T_\ell\geq m^{1/6}$. Under this conditioning the polynomial $P_\ell^*$ becomes precisely the polynomial $Q_\ell$, which is a linear polynomial in the remaining variables $x_i$ for $i\in I_{\text{good}}\setminus S_\ell$, having $T_\ell\geq m^{1/6}$ degree-$1$ coefficients with absolute value at least $\sigma$. Now, by the Erd\H os--Littlewood--Offord inequality (\cref{lem:LO}, applied with $t=1$) we indeed have $\Pr\left(\vert P_\ell^*\vert<\sigma\right)\leq 3/T_\ell^{1/2}\leq 3m^{-1/12}$.
\end{proof}

Next, define the random variable $X$ as the number of interesting $\ell\in \{1,\dots,m\}$ such that $T_\ell\geq m^{1/6}$. Furthermore, define the random variable $Y$ as the number of interesting $\ell\in \{1,\dots,m\}$ such that $T_\ell\geq m^{1/6}$ and $\vert P_\ell^*\vert<\sigma$.

\begin{claim}
With probability at least $1-m^{-1/12}$ we have $X\geq m/18$.
\end{claim}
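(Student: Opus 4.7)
The plan is to use McDiarmid's bounded differences inequality (\cref{lem:AH}) together with the lower bound on $\E X$ that follows from \cref{claim-T-ell-large}. Since we are assuming there are at least $m/3$ interesting indices, linearity of expectation and \cref{claim-T-ell-large} give $\E X \geq (m/3) \cdot (1/3) = m/9$. The remaining randomness (after all the conditioning done so far) comes from the variables $x_i$ with $i \in I_{\text{good}}$, and there are at most $|I| \leq 2m$ such variables.

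The key observation is that each $T_\ell$ depends only on the variables $x_i$ with $i \in S_\ell$. Therefore, varying a single variable $x_i$ (for $i \in I_{\text{good}}$) can change whether $T_\ell \geq m^{1/6}$ only for those interesting $\ell$ with $i \in S_\ell$. By the definition of ``good,'' the index $i$ lies in $S_\ell$ for at most $m^{1/3}$ non-easy indices $\ell$, and every interesting index is non-easy. So varying any single $x_i$ changes $X$ by at most $m^{1/3}$.

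Applying \cref{lem:AH} with $n = 2m$, $c = m^{1/3}$, and $t = m/18$ (which is valid since $\E X \geq m/9 = 2m/18$), we obtain
\[
\Pr\!\left(X \leq m/18\right) \leq \Pr\!\left(X - \E X \leq -m/18\right) \leq 2\exp\!\left(-\frac{(m/18)^2}{2 \cdot 2m \cdot m^{2/3}}\right) = 2\exp\!\left(-\frac{m^{1/3}}{1296}\right),
\]
which is at most $m^{-1/12}$ for sufficiently large $m$. No step here is really an obstacle: the only thing to be careful about is verifying that the bounded-differences constant $c = m^{1/3}$ is the right quantity, which follows directly from the definition of ``good'' variable. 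The argument exploits precisely the feature that distinguishes good variables from bad ones, namely that each good variable contributes to few of the vertex covers $S_\ell$.
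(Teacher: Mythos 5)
Your proof is correct and follows essentially the same route as the paper: lower-bound $\E X$ via \cref{claim-T-ell-large}, observe that each $T_\ell$ depends only on $x_i$ for $i\in S_\ell$ so that a good variable affects $X$ by at most $m^{1/3}$, and then apply \cref{lem:AH} with the same parameters. The constants and the final bound $2\exp(-m^{1/3}/1296)\leq m^{-1/12}$ match the paper's computation exactly.
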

\begin{proof}
First, note that by \cref{claim-T-ell-large} we have $\E X\geq (m/3)\cdot (1/3)=m/9$. Also recall that for each interesting $\ell$, the event $T_\ell\geq m^{1/6}$ only depends on the outcomes of $x_i$ for $i\in S_\ell$. This means that changing the outcome of any of the good random variables $x_i$ can affect $X$ by at most $m^{1/3}$ (recall that for each good $x_i$, we have $i\in S_\ell$ for at most $m^{1/3}$ different $\ell$). Hence by \cref{lem:AH}, we have $X\geq m/18$ with probability at least
\[1-2\exp\left(-\frac{(m/18)^{2}}{2\cdot 2m\cdot m^{2/3}}\right)=1-2\exp\left(-\frac{m^{1/3}}{36^2}\right)\geq 1-m^{-1/12}\]
(for sufficiently large $m$), as desired.
\end{proof}

\begin{claim}
With probability at least $1-108m^{-1/12}$ we have $Y\leq m/36$.
\end{claim}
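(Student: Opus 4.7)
The plan is a direct application of Markov's inequality, layered on top of the pointwise bound from \cref{claim-T-ell-large-and-poly-small}. The hard probabilistic input (an application of the Erd\H{o}s--Littlewood--Offord inequality to the linearised polynomial $Q_\ell$) was already packaged into that claim, so here there is nothing left but aggregation.

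First, by linearity of expectation, using that there are at most $m$ interesting indices and invoking \cref{claim-T-ell-large-and-poly-small} termwise, I would obtain
\[
\E Y \;=\; \sum_{\ell\text{ interesting}} \Pr\!\left(T_\ell\ge m^{1/6}\text{ and }|P_\ell^*|<\sigma\right) \;\le\; m\cdot 3m^{-1/12} \;=\; 3m^{11/12}.
\]
Note that no independence between the events indexed by different $\ell$ is needed: linearity of expectation holds irrespective of the joint distribution.

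Then, since $Y$ is a nonnegative (integer-valued) random variable, Markov's inequality yields
\[
\Pr\!\left(Y > m/36\right) \;\le\; \Pr\!\left(Y\ge m/36\right) \;\le\; \frac{\E Y}{m/36} \;\le\; \frac{36\cdot 3m^{11/12}}{m} \;=\; 108\,m^{-1/12},
\]
which is exactly the complement of the desired bound $\Pr(Y\le m/36)\ge 1-108m^{-1/12}$.

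No serious obstacle is anticipated. In contrast to the preceding claim (where the \emph{lower} bound $X\ge m/18$ required the McDiarmid bound \cref{lem:AH} because the indicators depended on shared good variables through the sets $S_\ell$), here we want an \emph{upper} bound on $Y$, and for this the crude first-moment method is already enough; the resulting tail exponent $m^{-1/12}$ matches the exponent in \cref{claim-T-ell-large-and-poly-small} up to the constant $108$, so no sharper concentration tool would improve the bound without first improving \cref{claim-T-ell-large-and-poly-small}. Looking ahead, this claim will be combined with $X\ge m/18$ to produce $X-Y\ge m/36$ interesting indices for which simultaneously $T_\ell\ge m^{1/6}$ and $|P_\ell^*|\ge\sigma$; since $|P_\ell-P_\ell^*|\le\sigma-\tau$ holds deterministically, this will force $|P_\ell|\ge\tau=\lambda/(4n^4)$ for that many $\ell$, which is exactly what is required to establish \cref{eq-many-l-satisfy}.
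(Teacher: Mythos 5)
Your proof is correct and is essentially the same as the paper's: bound $\E Y$ by linearity of expectation via \cref{claim-T-ell-large-and-poly-small}, then apply Markov's inequality. The only difference is that you spell out the linearity-of-expectation step and the strict-versus-weak inequality more explicitly.
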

\begin{proof}
By \cref{claim-T-ell-large-and-poly-small} we have $\E Y\leq m\cdot 3m^{-1/12}=3m^{11/12}$. Hence, by Markov's inequality we have $Y\geq m/36$ with probability at most $108m^{-1/12}$.
\end{proof}

From the previous two claims, we conclude that if $m$ is sufficiently large, then with probability at least $1-109m^{-1/12}\geq 1-m^{-1/24}$ we have $X-Y\geq m/36$. But note that whenever $X-Y\geq m/36$, there are at least $m/36$ interesting $\ell\in \{1,\dots,m\}$ such that $\vert P_\ell^*\vert\geq \sigma$, which implies $\vert P_\ell\vert\geq \sigma-(\sigma-\tau)= \tau=\lambda/(4n^4)$. This proves \cref{eq-many-l-satisfy}, and finishes the proof of \cref{lem:endgame-step}.

\section{Concluding remarks}\label{sec:concluding}

We have proved that the permanent of a random symmetric $\pm 1$ matrix typically has magnitude $n^{n/2+o(n)}$. This encapsulates the upper bound in \cref{thm:var} and the lower bound in \cref{thm:main}.

The upper bound and lower bound both permit some fairly immediate generalisations. For example, in the setting of \cref{thm:main} we actually have $\Pr(\per M_{n}=a)\le \Pr(|\per M_{n}-a|\le n^{n/2-\varepsilon n})\le n^{-c}$ for all $a$ (not just $a=0$). To see this, recall that the proof of \cref{thm:main} concludes by repeatedly applying \cref{lem:endgame-step}, where the final application shows that, conditional on a typical outcome of $\per M_{n-1}$, it is very likely that $|\per M_{n}|\le n^{n/2-\varepsilon n}$. In this final application we can instead apply a slight generalisation of \cref{lem:endgame-step} (proved in the same way), showing that for any $a$ in fact it is very likely that $|\per M_{n}-a|\le n^{n/2-\varepsilon n}$.

We can also permit the entries of our random matrix to take more general distributions. As defined in the introduction, consider any real probability distributions $\mu$ and $\nu$, and let $M_n^{\mu,\nu}$ be the random symmetric matrix whose diagonal entries have distribution $\nu$ and whose off-diagonal entries have distribution $\mu$ (and whose entries on and above the diagonal are mutually independent). If $\mu$ and $\nu$ are fixed (not depending on $n$), $\nu$ has finite second moment and $\mu$ has vanishing first moment and finite fourth moment, then essentially the same proof as for \cref{thm:var} shows that $\Pr\left(|\per M^{\mu,\nu}_{n}|\ge n^{n/2+\varepsilon n}\right)\le n^{-\varepsilon n}$ for $n$ sufficiently large with respect to $\eps$, $\mu$ and $\nu$.

The conclusion of \cref{thm:main} can be even more freely generalised to any fixed distributions $\mu$ and $\nu$ such that $\mu$ is supported on at least two points (not requiring any moment assumptions at all). In the case where $\mu$ is supported on two points, any quadratic polynomial in independent $\mu$-distributed random variables can be rewritten as a multilinear polynomial. One can then use essentially the same proof as for \cref{thm:main} (only changing the various constants in the lemma statements) to prove that there is a constant $c$ (depending on $\mu$ but not $\nu$) such that for all $\eps>0$ and all $n$ sufficiently large with respect to $\eps$, we have $\Pr(|\per M^{\mu,\nu}_n|\le n^{n/2-\varepsilon n})\le n^{-c}$. Note that changing $\nu$ has no effect on our arguments, because we never actually use the randomness of the diagonal entries. One needs some slight generalisations of the anti-concentration lemmas in \cref{sec:tools} for $\mu$-distributed random variables, but these are indeed available; see \cite[Theorem~A.1]{BVW10} and \cite[Theorem~1.8]{MNV16}.

In the case where $\mu$ is supported on more than two points, then it is necessary to make slightly more involved changes to the proof of \cref{thm:main}, but the same result does hold. To give a brief sketch: in this case, the main issue is that in the proof of \cref{lem:endgame-step} we are no longer able to assume that the relevant quadratic polynomials are multilinear, so we must treat the square terms $x_i^2$ in basically the same way we treat the linear terms $x_i$. To be specific, in the proof of \cref{lem:endgame-step}, we must allow the polynomials $Q_\ell$ to contain square terms $x_i^2$ in addition to linear terms. There are several aspects to this. First, we need to generalise \cref{fact:non-degenerate} and \cref{lem:MNV} to quadratic polynomials of independent $\mu$-distributed random variables. In particular, we need a generalisation of \cref{lem:MNV} for quadratic polynomials that are not necessarily multilinear, still giving a bound in terms of the graph matching number $\nu(G^{(r)}(f))$ (where we ignore the square terms in $f$ for the purpose of constructing the graph $G^{(r)}(f)$). Suitable generalisations of \cref{fact:non-degenerate} and \cref{lem:MNV} can be proved with the methods in \cite[Section~4]{MNV16} (in fact, in this section the authors prove \cite[Theorem~1.8]{MNV16}, which is essentially the required generalisation of \cref{lem:MNV} but with slightly different assumptions).

Second, we need a generalisation of \cref{fact:non-degenerate-linear} for $\mu$-distributed random variables (for which we can make fairly trivial changes to the proof of \cref{fact:non-degenerate-linear}). And lastly, we need a generalisation of the Erd\H os--Littlewood--Offord theorem (\cref{lem:LO}) for polynomials of independent $\mu$-distributed random variables (when $\mu$ is supported on at least three values) which applies not just to linear polynomials, but also to quadratic polynomials consisting of both linear and square terms (having no multilinear degree-2 terms). Such polynomials can be interpreted as linear polynomials in independent (but not identically distributed) random variables, and therefore the appropriate generalisation of \cref{lem:LO} follows from the Doeblin--L\'evy--Kolmogorov--Rogozin inequality~\cite{Rog61} (or alternatively the method in \cite[Section~4]{MNV16}), together with a basic single-variable quadratic anti-concentration bound. Namely, we need the fact that for any real random variable $x$ supported on at least 3 values, and any $r>0$, there are $s>0$ and $p>0$ such that we have $\Pr( |f(x)| < s ) < 1-p$ for any one-variable quadratic polynomial $f$ whose linear or quadratic coefficient has absolute value at least $r$.

So, for example, both \cref{thm:var} and \cref{thm:main}, and therefore \cref{thm:main-0}, can be generalised to the case where $\mu$ is a centred Gaussian distribution (as long as $\mu$ and $\nu$ do not depend on $n$, and $\nu$ has finite second moment). The Gaussian Orthogonal Ensemble (GOE) is an important special case. Another case that may be of particular interest is the case where the support of $\mu$ is  $\{0,1\}$, and $\nu$ is the trivial distribution always taking the value zero (\cref{thm:var} does not hold in this case, but \cref{thm:main} does). In this case $M^{\mu,\nu}_n$ can be interpreted as the adjacency matrix of a random graph. However, we note that in this case the statement in \cref{thm:main} can be proved in a much simpler way: we can take advantage of the fact that changing any off-diagonal entry in $M^{\mu,\nu}_n$ from $0$ to $1$ typically causes a large increase in the value of $\per M^{\mu,\nu}_n$, and apply a more general anti-concentration inequality for functions with this property \cite[Theorem~1.2]{FKS}. Actually, in this setting we suspect that it is possible to prove a limit law for $\per M^{\mu,\nu}_n$, using the ideas in \cite{Jan94}.

Regarding further directions for research, it would be very interesting to prove stronger upper bounds for the concentration probabilities of the permanent, in both the i.i.d.\ and the symmetric case. It is currently only known that $\Pr(\per M_{n}=a)$ and $\Pr(\per A_{n}=a)$ are bounded by $n^{-c}$ for some constant $c$. It would be very interesting to prove bounds of the form $n^{-\omega(1)}$, where $\omega(1)$ is any function going to infinity with $n$. Actually, Vu conjectured (see \cite[Conjecture~6.12]{Vu20}) that $\Pr(\per A_{n}=0)$
is of the form $\omega(1)^{-n}$ (in contrast to the situation for the determinant, where we have $\Pr(\det A_{n}=0)=2^{-n+o(n)}$). It seems reasonable to conjecture that in fact all probabilities of the form $\Pr(\per M_{n}=a)$ or $\Pr(\per A_{n}=a)$ are upper-bounded by $n^{-cn}$, for some constant $c$. However, essentially all known tools for studying permanents of random matrices also apply to determinants, so significant new ideas would be required to prove such strong results.

\textbf{Acknowledgements.} We would like to thank Asaf Ferber for insightful discussions. We are also grateful to the referee for their careful reading of the paper, and their useful comments and suggestions.

\bibliographystyle{amsplain_initials_nobysame_nomr}
\bibliography{ref}

\end{document}